\documentclass{amsart}

\usepackage[latin1]{inputenc}
\usepackage{amssymb}
\usepackage{amsmath}
\usepackage{enumerate}
\usepackage{epsfig}
\usepackage{subfigure}
\usepackage[all]{xy}
\usepackage{longtable}

\newtheorem{theorem}{Theorem}[section]

\newtheorem{proposition}[theorem]{Proposition}

\theoremstyle{definition}
\newtheorem{definition}[theorem]{Definition}
\newtheorem{example}[theorem]{Example}

\theoremstyle{remark}
\newtheorem{rem}[theorem]{Remark}
\theoremstyle{remark}

\numberwithin{equation}{section}
\newcommand{\cE}{\mathcal{E}}
\newcommand{\F}{\mathbb{F}}

\newcommand{\ra}{\rightarrow}

\newcommand{\diag}{\operatorname{diag}}
\newcommand{\PP}{\mathbb{P}}
\newcommand{\C}{\mathbb{C}}

\newcommand{\Z}{\mathbb{Z}}
\newcommand{\Q}{\mathbb{Q}}

\newcommand{\N}{\mathbb{N}}

\newcommand{\cO}{\mathcal{O}}

\newcommand{\id}{{\operatorname{id}}}
\newcommand{\Fix}{{\operatorname{Fix}}}
\newcommand{\Aut}{{\operatorname{Aut}}}
\newcommand{\Hirz}{{\mathbb{F}}}
\newcommand{\Pic}{\operatorname{Pic}}
\newcommand{\Cone}{\operatorname{Cone}}

\makeatletter
\def\blfootnote{\xdef\@thefnmark{}\@footnotetext}

\begin{document} 
\title{Calabi--Yau 3-folds of Borcea--Voisin type and elliptic fibrations}

\author{Andrea Cattaneo}
\address{Andrea Cattaneo, Dipartimento di Matematica, Universit\`a di Milano,
  via Saldini 50, I-20133 Milano, Italy}
\email{andrea.cattaneo1@unimi.it}

\author{Alice Garbagnati}
\address{Alice Garbagnati, Dipartimento di Matematica, Universit\`a di Milano,
  via Saldini 50, I-20133 Milano, Italy}
\email{alice.garbagnati@unimi.it}

\begin{abstract}
{We consider Calabi--Yau 3-folds of Borcea--Voisin type, i.e. Calabi--Yau 3-folds obtained as crepant resolutions of a quotient $(S\times E)/(\alpha_S\times \alpha_E)$, where $S$ is a K3 surface, $E$ is an elliptic curve, $\alpha_S\in \Aut (S)$ and $\alpha_E\in \Aut(E)$ act on the period of $S$ and $E$ respectively with order $n=2,3,4,6$. The case $n=2$ is very classical, the case $n=3$ was recently studied by Rohde, the other cases are less known.
First we construct explicitly a crepant resolution, $X$, of $(S\times E)/(\alpha_S\times \alpha_E)$ and we compute its Hodge numbers; some pairs of Hodge numbers we found are new. Then we discuss the presence of maximal automorphisms and of a point with maximal unipotent monodromy for the family of $X$. Finally, we describe the map $\mathcal{E}_n: X \ra S/\alpha_S$ whose generic fiber is isomorphic to $E$.
}
\end{abstract}

\subjclass[2010]{Primary 14J32; Secondary 14J28, 14J50}
\keywords{Calabi--Yau 3-folds, automorphisms,  K3 surfaces, elliptic fibrations, isotrivial fibrations.
}
\maketitle

\section{Introduction}

In the Nineties several constructions of Calabi--Yau 3-folds, and in particular of mirror pairs of Calabi--Yau 3-folds, were proposed. One of them, the so called Borcea--Voisin construction, was independently described by Borcea and by Voisin in \cite{B}, \cite{V} respectively. The main idea is to consider a quotient of a product of Calabi--Yau varieties of lower dimension. More precisely, one considers two pairs: $(E, \iota_E)$, where $E$ is an elliptic curve and $\iota_E$ is its hyperelliptic involution, and $(S, \iota_S)$, where $S$ is a K3 suface (i.e.\ a Calabi--Yau surface) and $\iota_S$ is an involution of $S$ which does not preserve the period. The quotient $(S\times E)/(\iota_S\times \iota_E)$ is a singular 3-fold admitting a resolution which is a Calabi--Yau 3-fold. Both Borcea and Voisin explicitly constructed such a resolution and computed the Hodge numbers of the families of Calabi--Yau 3-folds obtained in such a way.
The classification of the involutions $\iota_S$ which act on a K3 surface without fixing the period of $S$ (and so of the K3 surfaces that can be used in the Borcea--Voisin construction) was given by Nikulin in \cite{N}. Several generalizations of the Borcea--Voisin construction have been introduced in the last years (see e.g. \cite{CH}, \cite{R}, \cite{R2}, \cite{D2}, \cite{G}), essentially considering desingularizations of quotients $(Y_1 \times Y_2)/(\alpha_1 \times \alpha_2)$, where $Y_i$ are Calabi--Yau varieties and $\alpha_i \in \Aut(Y_i)$. In order to obtain a Calabi--Yau variety one has to require that the automorphism $\alpha_i$ does not fix the period of $Y_i$, but $\alpha_1 \times \alpha_2$ fixes the wedge product of the periods of $Y_1$ and of $Y_2$.

Here we restrict our attention to Calabi--Yau 3-folds, and thus we can assume that $Y_1 =: S$ is a K3 surface and $Y_2 =: E$ is an elliptic curve. If we require that the order of $\alpha_1$ and $\alpha_2$ is the same, say $n$, then it has to be $n = 2, 3, 4, 6$ (for a more precise statement see Proposition \ref{prop: the construction}). Hence, we consider Calabi--Yau 3-folds constructed as resolution of a quotient $(S\times E) / (\Z/n\Z)$ for $n = 2, 3, 4, 6$ and we call them of Borcea--Voisin type.
In case $n = 2$ one obtains the ``classical'' and well known Borcea--Voisin construction.
A systematical analysis of the case $n = 3$ is presented in \cite{R} and \cite{D2} and uses the classification of the non-symplectic automorphisms of K3 surfaces of order 3, described independently by Artebani and Sarti, \cite{AS1}, and by Taki, \cite{T}.
Sporadic examples of the case $n = 4$ are analyzed in \cite{G}, where some peculiar K3 surfaces with a non-symplectic automorphism of order 4 are constructed and the associated Calabi--Yau 3-folds are presented. The complete classification of the K3 surfaces with non-symplectic automorphisms of order 4 and 6 is still unknown, but a lot of it is understood, see \cite{AS2} for $n = 4$ and \cite{D} for $n = 6$. Hence several families of Calabi--Yau 3-folds of Borcea--Voisin type obtained from quotients by automorphisms of order 4 and 6 can be described.

Given a quotient $(S\times E)/(\Z/n\Z)$ as before, there could exist more then one crepant resolution. We construct explicitly one specific crepant resolution (see Sections \ref{sec: construction X2}, \ref{sec: construction X3}, \ref{sec: construction X4}, \ref{subsec: X6}) and we call it of type $X_n$. The properties of the fixed loci of $\alpha_S^j$, $j=1,\ldots n-1$, on $S$ determine the Hodge numbers this 3-fold. We compute these for each admissible value of $n$ (see Propositions \ref{prop: hodge numbers X2}, \ref{prop: Hodge numbers X3}, \ref{prop: Borcea Voisin 4}, \ref{prop: hodge numbers 6}). Some of the Calabi--Yau 3-folds constructed have ``new'' Hodge numbers (here we refer to the database \cite{list} of the known Calabi--Yau 3-folds).

The 3-folds $X$ of type $X_n$ admit an automorphism induced by $\alpha_S \times \id$. In certain cases (see Proposition \ref{prop: h21=m-1 maximal automorphism}) this automorphism is a maximal automorphism for the family, i.e. it deforms to an automorphism of the varieties which are deformations of $X$. Such an automorphism acts non-trivially on the period of $X$. In \cite{R}, the maximal automorphisms which act non-trivially on the period are analyzed. In particular it is proved that if a family of Calabi--Yau 3-folds admits a maximal automorphism which acts on the period as the multiplication by an $n$-th root of unity, $n \neq 2$, then the family does not admit a point with maximal unipotent monodromy.
In \cite{R} the families of Calabi--Yau 3-folds of Borcea--Voisin type associated to $n = 3$ are considered and the ones with maximal automorphisms are classified, in order to construct families of Calabi--Yau 3-folds without maximal unipotent monodromy. Similarly, here we consider the family of Calabi--Yau 3-folds of Borcea--Voisin type associated to $n = 4, 6$ without maximal unipotent monodromy (see Remarks \ref{rem: no mum 4}, \ref{rem: maximal autom 6} and Tables \ref{table: Hodge numbers case 1}, \ref{table: only rational curves}, \ref{table: Hodge numbers X6}). In some peculiar cases the variation of Hodge structures of the family of Calabi--Yau 3-folds of type $X_n$ is essentially the variation of Hodge structures of a family of curves (see Remarks \ref{rem: variationa Hodge structure 3}, \ref{rem: isotrivial fib. S4, no mum}, and Example \ref{example no mum X6}).

By construction each variety $X$ of type $X_n$ is endowed with a map $\mathcal{E}_n: X\ra S/\alpha_S$ whose generic fiber is an elliptic curve isomorphic to $E$. The study of this map is one of the main tools of this paper (see Propositions \ref{prop: ell. fib. 2}, \ref{prop: ell. fib. 3}, \ref{prop: ell. fib. 4}, \ref{prop: ell. fib. 6}): $\mathcal{E}_n$ is an elliptic fibration (with section) if and only if $\alpha_S^j$ does not fix isolated points for any $j=1,\ldots n-1$. On the other hand, if $\alpha_S^j$ fixes some isolated point for a certain $j$, the fibers of $\mathcal{E}_n$ over the image of these points in $S/\alpha_S$ are the unique fibers which are not of Kodaira type, and they contain divisors. In any case a distinguished (rational) section is naturally given.
In case $S/\alpha_S$ (i.e. the base of the fibration) is smooth, we give a Weierstrass equation for $\mathcal{E}_n$ (see Sections \ref{Weierstrass 2}, \ref{Weierstrass 3-P1P1}, \ref{Weierstrass 3 F6}, \ref{Weierstrass 4 P2-proj}, \ref{Weierstrass 4 P2-proj}, \ref{Weeierstrass 6-P2 proj}, \ref{Weierstrass 6-F12}).

{\bf Acknowledgements:} The authors are grateful to Bert van Geemen for his support and his suggestions. They want to thank Michela Artebani and Alessandra Sarti for several useful discussions. The second author would like to thank the Fields Insituite and the organizers of the workshop ``Hodge theory in String Theory" (Fields Institute, November 2013) for giving her the possibility to present this work in a very stimulating atmosphere. 

The second author is supported by FIRB 2012 ``Moduli Spaces and Their Applications" and by PRIN 2010--2011 ``Geometria delle variet\`a algebriche".

\section{Preliminary results}

\subsection{Calabi--Yau $d$-folds and their automorphisms}\label{sect: cy and automorphism}

\begin{definition}
A \emph{Calabi--Yau $d$-fold} is a smooth compact K\"ahler variety of dimension $d$ such that the canonical bundle of $X$ is trivial and $h^{i, 0}(X) = 0$ if $i \neq 0, d$.

If $X$ is a Calabi--Yau $d$-fold, $H^{d, 0}(X) \simeq \C$ is generated by any non-trivial element in $H^{d, 0}(X)$. We call \emph{period} of $X$, denoted by $\omega_X$, a chosen non trivial element in $H^{d,0}(X)$, so $H^{d,0}(X)=\langle\omega_X\rangle$.

Let $\alpha$ be an automorphism of $X$. Then $\alpha^*$ acts on $H^d(X, \C)$ preserving the Hodge structure. In particular, $\alpha^*(\omega_X) = \lambda_\alpha \omega_X$ for a certain $\lambda_\alpha \in \C^*$. If $\lambda_\alpha = 1$, we will say that $\alpha$ preserves the period of $X$.
\end{definition}

The Calabi--Yau varieties of dimension 1 are the elliptic curves, the Calabi--Yau varieties in dimension 2 are the K3 surfaces.

\subsubsection{Automorphisms of elliptic curves}\label{subsec: automorphisms elliptic curve}

\begin{proposition}[{see e.g.\ \cite{ST}}]\label{prop: autom elliptic curves}
Let $E$ be an elliptic curve (i.e.\ a Calabi--Yau variety of dimension 1). If $\alpha_E$ is an automorphism of $E$ which does not preserve the period, then $\alpha^*(\omega_E) = \zeta_n \omega_E$ where $n = 2, 3, 4, 6$. If $n \neq 2$, then $E$ is a rigid elliptic curve with complex multiplication. More precisely: if $n = 3, 6$, then $j(E) = 0$; if $n = 4$, then $j(E) = 1728$.
\end{proposition}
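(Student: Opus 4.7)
The plan is to pass to the uniformization $E = \mathbb{C}/\Lambda$ and use the standard structure of $\operatorname{Aut}(E)$. Every automorphism of $E$ as a complex manifold lifts to an affine map $z \mapsto \mu z + t$ on $\mathbb{C}$ with $\mu \in \mathbb{C}^*$ satisfying $\mu\Lambda = \Lambda$; the translational part contributes nothing to the pullback of $\omega_E = dz$, so $\alpha_E^*\omega_E = \mu\,\omega_E$, and the assumption that the period is not preserved is exactly $\mu \neq 1$.

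Next, I would extract arithmetic constraints on $\mu$. Choosing a $\mathbb{Z}$-basis of $\Lambda$, the multiplication-by-$\mu$ map on $\Lambda$ is a matrix $M \in \operatorname{GL}_2(\mathbb{Z})$ (invertible since $\mu^{-1}\Lambda = \Lambda$ as well), whose eigenvalues are $\mu$ and $\bar\mu$ (using that $\Lambda \otimes \mathbb{R} = \mathbb{C}$ is $M$-invariant and the induced $\mathbb{C}$-linear action on the $(1,0)$-part is scalar $\mu$). Hence $\mu + \bar\mu = \operatorname{tr} M \in \mathbb{Z}$ and $|\mu|^2 = \mu\bar\mu = \det M \in \mathbb{Z}$; applying the same reasoning to $\mu^{-1}$ gives $|\mu|^{-2} \in \mathbb{Z}$, so $|\mu| = 1$. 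Thus $\mu$ is an algebraic integer all of whose conjugates have absolute value $1$ and whose minimal polynomial over $\mathbb{Q}$ has degree at most $2$, so by Kronecker's theorem $\mu$ is a root of unity $\zeta_n$ with $\varphi(n) \leq 2$, i.e.\ $n \in \{1,2,3,4,6\}$; ruling out $n=1$ by $\mu \neq 1$ leaves $n \in \{2,3,4,6\}$.

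For $n \neq 2$, the identity $\zeta_n \Lambda = \Lambda$ makes $\Lambda$ a rank-one module over $\mathbb{Z}[\zeta_n]$. For $n = 3,4,6$ the ring $\mathbb{Z}[\zeta_n]$ is a PID (in fact Euclidean), so every such $\Lambda$ is principal, hence similar (via multiplication by a suitable $c \in \mathbb{C}^*$) to $\mathbb{Z}[\zeta_n]$ itself. This scaling is a biholomorphism of elliptic curves, so $E \cong \mathbb{C}/\mathbb{Z}[\zeta_n]$ is determined up to isomorphism by $n$, and in particular rigid with complex multiplication by $\mathbb{Z}[\zeta_n]$. Noting $\mathbb{Z}[\zeta_6] = \mathbb{Z}[\zeta_3]$, the cases $n=3,6$ give $E \cong \mathbb{C}/(\mathbb{Z}+\zeta_3\mathbb{Z})$, hence $j(E)=0$; the case $n=4$ gives $E \cong \mathbb{C}/\mathbb{Z}[i]$, hence $j(E)=1728$.

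The only potential obstacle is the step that forces $|\mu|=1$ and bounds $\deg_\mathbb{Q}\mu \leq 2$; both are formal once the $\operatorname{GL}_2(\mathbb{Z})$-action on $\Lambda$ is in place, and Kronecker's theorem (or an elementary direct check of which $\zeta_n$ satisfy a quadratic over $\mathbb{Q}$) closes the argument with no real analytic input.
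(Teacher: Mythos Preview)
The paper does not supply its own proof of this proposition; it is stated with a reference (``see e.g.\ \cite{ST}'') and then used. Your argument is the standard one and is correct: lifting to the universal cover, extracting the multiplier $\mu$ with $\mu\Lambda=\Lambda$, reading off $|\mu|=1$ and $\operatorname{tr},\det\in\Z$ from the induced $\operatorname{GL}_2(\Z)$-action, and concluding $\varphi(n)\le 2$. The identification of the specific curves via the PID property of $\Z[\zeta_3]$ and $\Z[i]$ (and the observation $\Z[\zeta_6]=\Z[\zeta_3]$) is also the usual way to pin down $j=0$ and $j=1728$.

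One cosmetic remark: invoking Kronecker's theorem is heavier than necessary. Once you know $\mu\bar\mu=1$ and $\mu+\bar\mu\in\Z$, the bound $|\mu+\bar\mu|\le 2$ forces $\mu+\bar\mu\in\{-2,-1,0,1,2\}$, and solving the quadratic $x^2-(\mu+\bar\mu)x+1=0$ directly lists $\mu\in\{1,-1,\zeta_3^{\pm1},\pm i,\zeta_6^{\pm1}\}$. This elementary enumeration replaces Kronecker and makes the proof entirely self-contained.
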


We recall that any elliptic curve over $\C$ admits a Weierstrass equation of the form $v^2 = u^3 + au + b$. The hyperelliptic involution, denoted by $\iota$, acts on these coordinates in the following way: $\iota: (u, v) \mapsto (u, -v)$. It acts as the multiplication by $-1$ on the period.

We will denote by $E_{\zeta_3}$ the elliptic curve with $j$-invariant equal to 0, and with $E_i$ the one with $j$-invariant equal to $1728$.

We recall that a Weierstrass equation for $E_{\zeta_3}$ is $v^2 = u^3 + 1$. We denote by $\alpha_E: E_{\zeta_3} \ra E_{\zeta_3}$ the automorphism $\alpha_E: (u, v) \mapsto (\zeta_3 u, v)$ and by $\beta_E := \alpha_E^2 \circ \iota_E$.

Similarly, a Weierstrass equation for $E_{i}$ is $v^2 = u^3 + u$. We denote by $\alpha_E: E_{i} \ra E_{i}$ the automorphism $\alpha_E: (u, v) \mapsto (-u, iv)$ and we observe that $\alpha_E^2 = \iota$.

\subsubsection{Automorphisms of K3 surfaces}

\begin{definition}
Let $S$ be a K3 surface. An automorphism $\alpha_S \in \Aut(S)$ which preserves the period is called \emph{symplectic}. An automorphism $\alpha_S \in \Aut(S)$ of finite order $n := |\alpha_S|$ is \emph{purely non-symplectic} (of order $n$) if $\alpha_S^*(\omega_S) = \zeta_n \omega_S$, where $\zeta_n$ is a primitive $n$-th root of unity.
\end{definition}

We observe that the choice of the period of a K3 surface determines a symplectic structure on $S$ (this motivates the previous definition of symplectic automorphism). 
\begin{proposition}[{\cite{K}}]
Let $S$ be a K3 surface and $\alpha_S$ a purely non-symplectic automorphism of order $n$. Then $n \leq 66$ and if $n = p$ is a prime number, then $p \leq 19$. For every $p \leq 19$ there exists at least one K3 surface admitting a purely non-symplectic automorphism of order $p$.
\end{proposition}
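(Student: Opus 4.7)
The plan is to analyze the representation of the cyclic group $\langle \alpha_S \rangle$ on the integral cohomology $H^2(S,\Z)$, and in particular on the transcendental lattice $T(S)$, which contains the period $\omega_S$. Since $\alpha_S^*$ preserves the Hodge decomposition and the lattice structure, it restricts to an isometry of $T(S)$ of finite order dividing $n$, with $\omega_S$ as a $\zeta_n$-eigenvector.

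First I would show that the characteristic polynomial $\chi(x)$ of $\alpha_S^*|_{T(S)\otimes \Q}$ is divisible by the cyclotomic polynomial $\Phi_n(x)$. The argument is that $\chi(x)\in\Z[x]$ and has $\zeta_n$ as a root (coming from $\omega_S$), so by Galois invariance the minimal polynomial of $\zeta_n$, which is $\Phi_n$, must divide $\chi$. This gives the fundamental inequality
\[
\varphi(n) \;=\; \deg \Phi_n \;\leq\; \rk T(S).
\]
Next, I would argue that a K3 surface carrying a purely non-symplectic automorphism of finite order is necessarily projective: averaging a K\"ahler class over the finite group $\langle \alpha_S\rangle$ produces an invariant K\"ahler class, and the fact that $\alpha_S$ acts non-trivially on $H^{2,0}(S)$ forces this averaged class to lie in $NS(S)_{\R}$, so $\rho(S)\geq 1$. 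Consequently $\rk T(S)\leq 21$ and therefore $\varphi(n)\leq 21$.

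From this arithmetic bound the inequality $n\leq 66$ follows by a direct check of Euler's totient: one lists the integers with $\varphi(n)\leq 21$ and verifies that $66$ is the largest (indeed $\varphi(66)=20$, whereas $\varphi(n)>21$ for every $n>66$). For $n=p$ prime, $\varphi(p)=p-1\leq 21$ gives $p\leq 22$, and the prime constraint cuts this down to $p\leq 19$.

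The final claim, existence of a K3 surface with a purely non-symplectic automorphism of each prime order $p\in\{2,3,5,7,11,13,17,19\}$, requires exhibiting explicit models. For $p=2$ one may take any double cover of $\PP^2$ branched along a smooth sextic, with the covering involution. For the remaining primes I would exhibit, case by case, a projective K3 with an automorphism of the prescribed order acting by $\zeta_p$ on the period, for instance via weighted hypersurfaces or via quotients of Fermat-type surfaces: a typical example for $p=19$ is the minimal resolution of the surface $y^2=x^3+t^7x+t$ (an elliptic K3 whose Mordell--Weil / base action realises an order $19$ symmetry), and analogous explicit equations are known for $p=11,13,17$. The main obstacle is this last part: the upper bound is a one-step lattice-theoretic argument, whereas the sharpness at each prime $\leq 19$ relies on a nontrivial case-by-case geometric construction, which I would invoke from the literature rather than reproduce in detail.
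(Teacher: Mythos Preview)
The paper does not give its own proof of this proposition: it is stated with the citation \cite{K} and used as background. So there is no ``paper's proof'' to compare against; the result is simply quoted from Kond\=o.

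Your sketch is essentially the standard argument behind the cited result and is correct in outline. The key steps --- that $\alpha_S^*$ preserves $T(S)$, that $\Phi_n$ divides the characteristic polynomial of $\alpha_S^*|_{T(S)}$ because $\zeta_n$ occurs as the eigenvalue on $H^{2,0}$, and that projectivity forces $\operatorname{rk} T(S)\le 21$ --- are exactly the ingredients used in the literature (Nikulin, then Kond\=o). One small refinement worth making explicit: for $n\ge 3$ the value $\varphi(n)$ is even, so $\varphi(n)\le 21$ really means $\varphi(n)\le 20$; the verification that $n=66$ is the maximal integer with $\varphi(n)\le 20$ is then a short finite check. Your projectivity argument is also fine once you note that $H^2(S,\Q)^{\alpha_S}$ is a rational sub-Hodge structure with trivial $(2,0)$-part, hence lies in $NS(S)_\Q$, so the averaged K\"ahler class lands in $NS(S)_\R$.

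For the existence part you correctly identify that this is the nontrivial, constructive half and defer to the literature. Your proposed $p=19$ example $y^2=x^3+t^7x+t$ does in fact carry a purely non-symplectic automorphism of order $19$ (coming from $t\mapsto \zeta_{19}t$ with the compatible rescaling of $x,y$), though the phrase ``Mordell--Weil / base action'' is a bit imprecise: the automorphism acts on the base $\PP^1$, not through the Mordell--Weil group. In any case, since the paper itself only cites \cite{K} for all of this, your level of detail already exceeds what the paper provides.
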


In the following we will be interested in purely non-symplectic automorphisms of K3 surfaces of order 2, 3, 4, 6. So we recall that there is a complete classification of the K3 surfaces admitting a purely non-symplectic automorphism of prime order (\cite{AST}) and partial results on K3 surfaces admitting a purely non-symplectic automorphism of order 4 and 6 (\cite{AS2} and \cite{D} respectively).

\begin{proposition}[{see e.g. \cite{AST}}]
Let $\alpha_S$ be a purely non-symplectic automorphism of order $n$. Let $\Fix_{\alpha_S}(S) := \{s \in S | \alpha_S(s) = s \}$ be the fixed locus of $\alpha_S$. Then there are the following possibilities:
\begin{enumerate}
\item $\Fix_{\alpha_S}(S)$ is empty, in this case $n = 2$;
\item $\Fix_{\alpha_S}(S)$ is the disjoint union of two curves of genus 1, in this case $n = 2$;
\item $\Fix_{\alpha_S}(S) = C \coprod_{i = 1}^{k - 1} R_i \coprod_{j = 1}^{n} P_j$ where $P_j$ are isolated fixed points, $C$ and $R_j$ are curves, $C$ is the one with highest genus $g(C) \geq 0$ and $R_j$ are rational curves.
\end{enumerate}
\end{proposition}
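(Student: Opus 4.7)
The plan is to combine a local analysis at each fixed point, coming from linearization of the cyclic action, with global constraints coming from the non-symplectic action on $H^{2,0}(S)$.

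First I would establish the local structure. By Cartan's lemma, a finite-order analytic action can be linearized near any fixed point, so there exist analytic coordinates $(x,y)$ centered at $p\in\Fix_{\alpha_S}(S)$ in which $\alpha_S$ acts diagonally as $(x,y)\mapsto (\zeta_n^a x,\zeta_n^b y)$. Writing $\omega_S=f(x,y)\,dx\wedge dy$ with $f(p)\neq 0$, the non-symplectic condition $\alpha_S^*\omega_S=\zeta_n\omega_S$ forces
\[
a+b\equiv 1 \pmod{n}.
\]
Hence $p$ is isolated in $\Fix_{\alpha_S}(S)$ precisely when both $a,b\neq 0$; otherwise exactly one weight vanishes and $p$ lies on a smooth curve (the $1$-eigenspace of $d\alpha_S|_p$). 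Two distinct components of the fixed locus cannot meet, because an intersection would force $d\alpha_S=\id$ at the meeting point, and linearization would exhibit a whole open fixed neighborhood, contradicting the faithful action on $\omega_S$. Consequently $\Fix_{\alpha_S}(S)$ is automatically a smooth disjoint union of curves and isolated points.

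Next I would split on the order. If $n=2$, the congruence $a+b\equiv 1\pmod 2$ with $a,b\in\{0,1\}$ forces $\{a,b\}=\{0,1\}$; thus no isolated fixed point can occur and $\Fix_{\alpha_S}(S)$ is a finite disjoint union of smooth curves. Nikulin's classification of non-symplectic involutions on K3 surfaces (\cite{N}) gives exactly the three configurations: empty fixed locus, two disjoint smooth curves of genus $1$, or a smooth curve $C$ of genus $g(C)\geq 0$ together with $k-1$ disjoint smooth rational curves. This settles cases (1), (2), and the $n=2$ subcase of (3). For $n\geq 3$ both tangent weights at $p$ can be non-trivial, so isolated fixed points are possible, and it remains only to show that at most one fixed curve carries positive genus and that the others are rational. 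This is obtained by the holomorphic Lefschetz fixed point theorem applied to $\alpha_S$ on $H^*(S,\cO_S)$,
\[
1+\zeta_n^{-1}=\sum_{P\text{ isolated}}\frac{1}{(1-\zeta_n^{a_P})(1-\zeta_n^{b_P})}+\sum_{C\text{ fixed curve}}\frac{1-g(C)}{1-\zeta_n^{-1}},
\]
together with the analogous identities for each $\alpha_S^{j}$ with $\gcd(j,n)=1$. The resulting linear system in the genera $g(C_i)$ and the numbers of isolated points with prescribed weights admits only solutions in which at most one $g(C_i)$ is positive; labelling this component $C$ and the remaining smooth rational fixed curves $R_1,\ldots,R_{k-1}$ produces the stated normal form.

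The main obstacle is the last step: the local analysis and the $n=2$ dichotomy are essentially formal, but ruling out two fixed curves of positive genus when $n\geq 3$ needs the full Atiyah--Bott/holomorphic-Lefschetz bookkeeping for every suitable power of $\alpha_S$. For composite $n=4,6$ this is further complicated by the necessity of controlling the fixed loci of the proper divisor-order powers $\alpha_S^{2}$ and $\alpha_S^{3}$, which strictly contain $\Fix_{\alpha_S}(S)$ and impose additional compatibility conditions on the genera and on the number and weights of isolated fixed points. For prime $n$ this entire program is carried out in \cite{AST}, and the composite cases reduce to iterating the same argument for each prime-power factor.
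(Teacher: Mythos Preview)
The paper does not supply its own proof of this proposition: it is quoted from the literature with the attribution ``see e.g.\ \cite{AST}'' and used as a black box. So there is no argument in the paper to compare against; the relevant benchmark is the proof in \cite{AST} (and, for $n=2$, Nikulin \cite{N}).

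Your local analysis is correct and is essentially the one in \cite{AST}: linearization plus the constraint $a+b\equiv 1\pmod n$ gives smoothness of $\Fix_{\alpha_S}(S)$ and the dichotomy between isolated points and curve components, and rules out isolated points when $n=2$.

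The gap is in your global step. You assert that the holomorphic Lefschetz identities for the various primitive powers of $\alpha_S$ yield a linear system whose only solutions have at most one $g(C_i)>0$, but you do not carry this out, and in fact it is not how the result is obtained. The argument in \cite{AST} is lattice-theoretic: the fixed curves are pairwise disjoint classes in $\mathrm{NS}(S)$, and by adjunction $C_i^2=2g(C_i)-2$. If two of them had $g\ge 1$, the sublattice they span would have Gram matrix $\begin{pmatrix}2g_1-2&0\\0&2g_2-2\end{pmatrix}$ with both diagonal entries $\ge 0$, contradicting the Hodge index theorem (signature $(1,\rho-1)$) unless both entries vanish. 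That degenerate case is exactly the configuration of two disjoint elliptic curves, and one then checks separately that it forces $n=2$ (the two curves are fibres of the same elliptic pencil, $\alpha_S$ acts on the base with two fixed points and on a generic fibre as an automorphism without fixed points preserving the origin, hence as $-1$). The holomorphic Lefschetz formula is used in \cite{AST}, but for a different purpose: to pin down the finite list of admissible triples $(g(C),k,n)$ once the shape of the fixed locus is known, not to bound the number of positive-genus components. So your outline needs the Hodge index step inserted in place of the unproved linear-system claim.
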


In the third case the fixed locus of $\alpha_S$ is determined by the triple $(g(C), k, n)$ and its Euler characteristic is $e(Fix_{\alpha_S}(S))=n+2k-2g(C)$. If $n = p$ is a prime number then for every prime $p \leq 19$ there exists a known finite list of admissible triples $(g(C), k, n)$ such that there exists at least a K3 surface admitting a non-symplectic automorphism of order $p$ with fixed locus associated to one of these triples.
 
\begin{definition}\label{notation K3 surfaces}
Let $\alpha_S$ be a purely non-symplectic automorphism of order $n$ on a K3 surface $S$. Let us denote by $H^2(S, \C)_{\zeta_n^j}$ the eigenspace of the eigenvalue $\zeta_n^j$ for the action of $\alpha_S^*$ on $H^2(S, \C)$. For every $i \in \Z / n\Z$ of order $n$, the dimension $\dim(H^2(S, \C)_{\zeta_n^i})$ does not depend on $i$ and will be denoted by $m$. We will denote by $r := \dim(H^2(S, \C)^{\alpha_S})$.
\end{definition}

\begin{proposition}\label{prop: dimension family K3} Let $S$ be a K3 surface admitting a purely non-symplectic automorphism $\alpha_S$ of order $n$. The numbers $r$ and $m$ are uniquely determined by the Euler characteristics of the fixed loci of $\alpha_S^j$ for $j=1,\ldots n-1$.

The dimension of the families of K3 surfaces $S$ admitting a purely non symplectic automorphism $\alpha_S$ of order $n$ with prescribed Euler characteristics of the fixed loci of $\alpha_S^j$, $j=1,\ldots n-1$, is $m-1$.\end{proposition}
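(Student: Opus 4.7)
The plan is to prove the two assertions separately, using the topological Lefschetz fixed point formula for the first and the local Torelli theorem for K3 surfaces for the second.

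\emph{Determination of $r$ and $m$.} For each $j=1,\ldots,n-1$, applying the Lefschetz fixed point formula to $\alpha_S^j$ and using that $\alpha_S^{j*}$ acts as the identity on $H^0(S,\C)$ and $H^4(S,\C)$ while $H^1=H^3=0$, I obtain
\[
e(\Fix_{\alpha_S^j}(S))=2+\operatorname{tr}\bigl(\alpha_S^{j*}|H^2(S,\C)\bigr)=2+\sum_{i=0}^{n-1}m_i\,\zeta_n^{ij},
\]
where $m_i:=\dim H^2(S,\C)_{\zeta_n^i}$. Together with the identity $\sum_{i=0}^{n-1}m_i=b_2(S)=22$, these yield a linear system of $n$ equations in the $n$ unknowns $m_0,\ldots,m_{n-1}$ whose coefficient matrix is the invertible discrete Fourier matrix $(\zeta_n^{ij})_{i,j}$. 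Hence all the $m_i$, and in particular $r=m_0$ and $m$ (namely $m_i$ for any $i$ coprime to $n$), are uniquely determined by the given Euler characteristics.

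\emph{Dimension of the family.} For the second assertion I would invoke the local Torelli theorem: infinitesimal deformations of $S$ are controlled by the position of the line $\ell=H^{2,0}(S)\subset H^2(S,\C)$, and those deformations compatible with $\alpha_S$ correspond to moving $\ell$ inside the eigenspace $H^2(S,\C)_{\zeta_n}$, of dimension $m$. The admissible periods form the open subset of
\[
\bigl\{[\omega]\in\PP\bigl(H^2(S,\C)_{\zeta_n}\bigr)\ :\ \omega\cdot\omega=0\bigr\}
\]
cut out by $\omega\cdot\bar{\omega}>0$. For $n\geq 3$ the relation $\omega\cdot\omega=\alpha_S^*(\omega)\cdot\alpha_S^*(\omega)=\zeta_n^{2}(\omega\cdot\omega)$ forces the quadric condition to be automatic, so the period varies freely in an open subset of $\PP^{m-1}$ and the family has dimension $m-1$.

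The main point that requires care is passing from this local period count to the actual moduli of pairs $(S,\alpha_S)$ with prescribed fixed-locus invariants. One needs to know that every period in the relevant open set is realized by a K3 surface whose automorphism has exactly the prescribed Euler characteristics on $\Fix_{\alpha_S^j}$. This follows from the surjectivity of the period map combined with the observation (a consequence of the first part) that those Euler characteristics depend only on the isometry class of the $\alpha_S^*$-action on $H^2(S,\Z)$, hence are locally constant along any deformation of $(S,\alpha_S)$.
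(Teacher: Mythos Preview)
Your argument for the first assertion is exactly the Lefschetz fixed point computation the paper has in mind, only spelled out in full; there is nothing to add there.

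For the second assertion the paper simply cites \cite[Section~11]{DK}, and what you sketch is precisely the period-domain/local Torelli argument that underlies that reference, so the approaches coincide. Your key observation---that for $n\ge 3$ the relation $\omega\cdot\omega=\zeta_n^{2}(\omega\cdot\omega)$ forces the quadric condition to vanish identically on the $\zeta_n$-eigenspace, so the period varies in an open subset of $\PP^{m-1}$---is correct and is exactly why these moduli spaces are complex-ball quotients.

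There is, however, a genuine gap: you explicitly restrict that step to $n\ge 3$ and say nothing about $n=2$. For $n=2$ the eigenvalue satisfies $(-1)^2=1$, the eigenspace $H^2(S,\C)_{-1}$ is the complexification of a real sublattice of signature $(2,m-2)$, and the intersection form restricts nondegenerately to it. The quadric $\omega\cdot\omega=0$ is then an honest hypersurface, so the period domain (a type~IV domain) has dimension $m-2$, not $m-1$; this agrees with the usual count $20-r$ for lattice-polarized K3 surfaces. Thus your argument does not establish the stated formula when $n=2$, and indeed the formula ``$m-1$'' is not what one gets in that case. You should either treat $n=2$ separately and record the correct dimension $m-2$, or note explicitly that your proof (and the assertion as phrased) is only valid for $n\ge 3$.
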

\proof
The first statement follows immediately by the Lefschetz fixed points formula, the second one by \cite[Section 11]{DK}. \endproof

We observe that if $\alpha_S$ acts as $\zeta_n$ on the period of $S$, then $\dim(H^{1, 1}(S)_{\zeta_n}) = m - 1$.

\subsubsection{Maximal automorphisms of Calabi--Yau 3-folds}
\begin{definition}
Let $\mathcal{X} \ra B$ be a family of Calabi--Yau $d$-folds $X_t$. A \emph{maximal automorphism} of such a family of Calabi--Yau $d$-folds is an automorphism $\alpha_{\overline{t}}$ of a smooth fiber $X_{\overline{t}}$ which extend to the local universal deformation of $X_{\overline{t}}$.
\end{definition}

Let $B$ be a polydisc and let $\mathcal{X} \ra B$ be a local family of Calabi--Yau 3-folds. Let us fix $\overline{t}\in B$. For a generic $t \in B$, $H^3(X_t, \Q) \simeq H^3(X_{\overline{t}}, \Q)$. If $\alpha_{\overline{t}}$ is an automorphism of $X_{\overline{t}}$, then $\alpha_{\overline{t}}^*$ acts on $H^3(X_t, \Q)$ for any $t$. If $\alpha_{\overline{t}}$ is a maximal automorphism for the family, the action of $\alpha_{\overline{t}}^*$ on $H^3(X_t, \Q)$ is induced by the automorphism $\alpha_t$ of $X_t$. In particular the action of $\alpha_t^*$ preserves the Hodge structure of $H^3(X_t,\Q)$, i.e. $\alpha_t^*=\alpha_{\overline{t}}^*$  is compatible with the variation of the Hodge structures of $X_{\overline{t}}$. Thus, the action of $\alpha_{t}^*$ on $\omega_{X_t}$ does not depend on $t$, i.e.\ there exists a complex non-trivial number $\lambda$ such that $\alpha^*_{t}(\omega_{X_t}) = \lambda(\omega_{X_t})$ for every generic $t\in B$. 

\begin{proposition}[{\cite[Theorem 8]{R}}]\label{prop: maximal automorphism, order}
Let $X_{\overline{t}}$ be a  Calabi--Yau and let $\alpha_{\overline{t}}\in \Aut(X_{\overline{t}})$ be a maximal automorphism of the family of $X_{\overline{t}}$. If $\alpha_{\overline{t}}$ acts on the period of $X_{\overline{t}}$ with finite order then either it acts trivially or with one of the following orders: 2,3,4,6 (i.e.\ the value $\lambda$ associated to $\alpha_{\overline{t}}$ takes one of the following values $1, -1, \zeta_3, i, \zeta_6$).
\end{proposition}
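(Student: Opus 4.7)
The plan is to analyze the action of $\alpha^*_{\overline{t}}$ on $H^3(X_{\overline{t}},\Q)$ using two ingredients: the rationality of $\alpha^*$ (which forces Galois-invariance of eigenvalues), and the maximality hypothesis (which forces the action on the Hodge decomposition to be especially simple).

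First I would translate the maximality condition into a statement about the action on $H^{2,1}$. Since $\alpha_{\overline{t}}$ extends to the local universal deformation $\mathcal{X}\to B$ of $X_{\overline{t}}$ and is an automorphism of $X_t$ for every $t\in B$, the induced action of $\alpha_{\overline{t}}$ on $B$ must be trivial. Consequently $\alpha_{\overline{t}}^*$ acts as the identity on the Kodaira--Spencer tangent space $T_{\overline{t}}B\cong H^1(X_{\overline{t}},T_{X_{\overline{t}}})$. Under the Calabi--Yau isomorphism $H^1(X_{\overline{t}},T_{X_{\overline{t}}})\cong H^{2,1}(X_{\overline{t}})$ given by contraction with $\omega_{X_{\overline{t}}}$, the element $\xi\lrcorner\omega_{X_{\overline{t}}}$ is sent by $\alpha_{\overline{t}}^*$ to $(\alpha_{\overline{t}}^*\xi)\lrcorner(\lambda\,\omega_{X_{\overline{t}}})$, hence triviality on $H^1(X_{\overline{t}},T_{X_{\overline{t}}})$ translates into $\alpha_{\overline{t}}^*$ acting as multiplication by $\lambda$ on the whole of $H^{2,1}(X_{\overline{t}})$. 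Combined with $\alpha_{\overline{t}}^*|_{H^{3,0}}=\lambda\,\id$, this shows that on $H^{3,0}\oplus H^{2,1}$ only the eigenvalue $\lambda$ occurs.

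Next I would apply Hodge symmetry. Because $\alpha_{\overline{t}}^*$ is a real operator (it comes from an automorphism, so is defined on $H^3(X_{\overline{t}},\R)$), complex conjugation exchanges the eigenspaces for $\lambda$ and for $\overline{\lambda}$, while simultaneously exchanging $H^{p,q}$ with $H^{q,p}$. Therefore $\alpha_{\overline{t}}^*$ acts as $\overline{\lambda}$ on all of $H^{0,3}\oplus H^{1,2}$. In particular the set of eigenvalues of $\alpha_{\overline{t}}^*$ on $H^3(X_{\overline{t}},\C)$ is contained in $\{\lambda,\overline{\lambda}\}$.

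Finally I would invoke rationality: since $\alpha_{\overline{t}}^*$ preserves the rational structure $H^3(X_{\overline{t}},\Q)$, its characteristic polynomial has rational coefficients, so the multiset of eigenvalues is stable under the Galois group $\mathrm{Gal}(\overline{\Q}/\Q)$. If $\lambda$ has finite order $n$, its Galois orbit consists of all primitive $n$-th roots of unity, a set of cardinality $\phi(n)$. The previous step forces this orbit to be contained in $\{\lambda,\overline{\lambda}\}$, hence $\phi(n)\le 2$, which gives $n\in\{1,2,3,4,6\}$. The case $n=1$ is the trivial action, and the remaining values $n=2,3,4,6$ correspond precisely to $\lambda\in\{-1,\zeta_3,i,\zeta_6\}$.

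The main obstacle is the first step: one must argue carefully that maximality really forces $\alpha^*$ to be the identity on $H^1(X,T_X)$ (rather than merely preserving some subspace). Once this is granted, the rest of the argument is an elementary combination of Hodge symmetry and cyclotomic degree, and the restriction $\phi(n)\le 2$ falls out immediately.
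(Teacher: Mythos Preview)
Your argument is correct and is essentially the standard proof of this fact. Note that the paper does not actually supply a proof of this proposition: it is quoted verbatim from \cite[Theorem~8]{R}, so there is no in-paper argument to compare against. That said, your line of reasoning---triviality of the induced action on $H^1(X,T_X)$ via the universal property of the Kuranishi family, transport to $H^{2,1}$ by contraction with $\omega_X$, Hodge conjugation to get $\overline{\lambda}$ on $H^{1,2}\oplus H^{0,3}$, and finally the cyclotomic bound $\varphi(n)\le 2$ from rationality of the characteristic polynomial---is exactly Rohde's proof in \cite{R}.

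The one point you yourself flag, namely that maximality forces the action on the base $B$ (and hence on $T_{\overline t}B\cong H^1(X_{\overline t},T_{X_{\overline t}})$) to be trivial, is indeed the only place requiring care. With the paper's definition this is immediate: a maximal automorphism is required to restrict to an automorphism $\alpha_t$ of \emph{each} fibre $X_t$, i.e.\ the extension to $\mathcal{X}\to B$ is over $\id_B$; the differential at $\overline t$ is then zero and your contraction computation goes through. (Under the weaker reading ``extends to some automorphism of the total space'' the condition would be vacuous, since universality of the Kuranishi family always produces such an extension covering a possibly nontrivial map on $B$.)
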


\begin{proposition}[{\cite[Theorem 7]{R}}]\label{prop: no unipotent monodromy}
Let $X_{\overline{t}}$ be a  Calabi--Yau and let $\alpha_{\overline{t}}\in \Aut(X_{\overline{t}})$ be a maximal automorphism of the family of $X_{\overline{t}}$. 
If $\alpha_{\overline{t}}$ acts with order 3, 4 or 6 on the period, then the family of $X_{\overline{t}}$ does not admit a point with maximal unipotent monodromy.
\end{proposition}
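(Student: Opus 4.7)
The plan is to argue by contradiction using the limiting mixed Hodge structure (LMHS) at a hypothetical MUM point. Suppose the family $\mathcal{X}\to B$ admits a point $0\in \overline{B}$ with maximal unipotent monodromy. After a base change we may assume the monodromy operator $T$ around $0$ is unipotent, and let $N=\log T$ be its nilpotent logarithm acting on $H^3$ of a nearby fiber. By the MUM hypothesis, $N^3\neq 0$ on $H^3$, and Schmid's theorem produces a LMHS which is Hodge--Tate of weight $3$: the weight graded pieces $\mathrm{Gr}^W_0$ and $\mathrm{Gr}^W_6$ are one-dimensional, $\mathrm{Gr}^W_2, \mathrm{Gr}^W_4$ are $h^{2,1}$-dimensional, and the odd weight pieces vanish. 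The primitive part of the monodromy filtration gives an isomorphism $N^3:\mathrm{Gr}^W_6\stackrel{\sim}{\longrightarrow}\mathrm{Gr}^W_0$.

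Next I would exploit the maximality of $\alpha_{\overline{t}}$. Since the automorphism deforms to the whole family, $\alpha^*$ acts on the local system $R^3\pi_*\Q$ and commutes with the monodromy $T$, hence with $N$. In particular, $\alpha^*$ preserves the weight filtration $W_\bullet$ (defined over $\Q$) and the limit Hodge filtration $F^\bullet_{\lim}$, so it acts on every graded piece $\mathrm{Gr}^W_k$. Since $F^3_{\lim}$ is one-dimensional, spanned by the limit of the period, $\alpha^*$ acts on $F^3_{\lim}$ by the scalar $\lambda$ (the eigenvalue of $\alpha^*$ on $\omega_{X_t}$). In the Hodge--Tate situation the natural projection $F^3_{\lim}\to \mathrm{Gr}^W_6$ is an isomorphism, so $\alpha^*$ acts on $\mathrm{Gr}^W_6$ as $\lambda=\zeta_n$.

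Now combine the two: the identity $\alpha^*\circ N^3 = N^3\circ \alpha^*$ forces $\alpha^*$ to act on $\mathrm{Gr}^W_0$ by the same scalar $\zeta_n$. But $\mathrm{Gr}^W_0$ is defined over $\Q$, and in fact carries a rank-one integral structure coming from $H^3(X_t,\Z)$. Therefore $\alpha^*$ acts on the rational line $\mathrm{Gr}^W_0$ by a rational scalar of absolute value $1$, which can only be $\pm 1$. Hence $\zeta_n=\pm 1$, forcing $n\in\{1,2\}$ and contradicting the hypothesis $n\in\{3,4,6\}$. This contradiction shows that no MUM point exists in the family.

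The main technical obstacle is justifying that the maximal automorphism $\alpha^*$ really does extend to an endomorphism of the limiting mixed Hodge structure compatible with $N$; once one knows $\alpha^*$ is globally defined on $R^3\pi_*\Q$ over a punctured neighbourhood of $0$ (which follows from maximality), the compatibility with monodromy is automatic, and the compatibility with the limit Hodge filtration follows from the $\mathrm{SL}_2$-orbit / nilpotent orbit theorem. After that the argument is the short linear-algebra computation above, which is essentially the content of Rohde's \cite[Theorem 7]{R}.
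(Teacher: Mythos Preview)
The paper does not give its own proof of this proposition: it is simply quoted as \cite[Theorem 7]{R} and used as a black box. So there is nothing in the paper to compare your argument against.

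That said, your sketch is correct and is essentially Rohde's original argument. The crucial ingredients are exactly the ones you isolate: (i) a maximal automorphism acts on the rational local system $R^3\pi_*\Q$ and therefore commutes with the local monodromy $T$, hence with $N=\log T$; (ii) at a MUM point the limiting mixed Hodge structure is Hodge--Tate, so $F^3_{\lim}$ maps isomorphically onto the one-dimensional $\mathrm{Gr}^W_6$, on which $\alpha^*$ then acts by the period eigenvalue $\zeta_n$; (iii) the isomorphism $N^3:\mathrm{Gr}^W_6\xrightarrow{\sim}\mathrm{Gr}^W_0$ transports this eigenvalue to the one-dimensional $\Q$-line $W_0=\mathrm{Im}(N^3)$, forcing $\zeta_n\in\Q$. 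Your justification that $\alpha^*$ respects $F^\bullet_{\lim}$ (via the nilpotent orbit theorem, using that $\alpha^*$ commutes with $e^{-zN}$ and preserves each $F^3(X_t)$) is the right way to close the only nontrivial gap.
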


The previous Proposition is used in several papers (see e.g.\ \cite{R}, \cite{GvG}, \cite{G}) to construct explicit examples of Calabi--Yau 3-folds without maximal unipotent monodromy. We observe that there exist also families of Calabi--Yau 3-folds without maximal unipotent monodromy which do not admit a maximal automorphism acting on the period as described in Proposition \ref{prop: no unipotent monodromy}, see \cite{CvD}.

\subsection{Elliptic fibrations on 3-folds}\label{sec: elliptic fibrations on 3-folds}

Let $Y$ be a 3-fold. We now define the notion of elliptic fibration. Since several 3-folds we construct in the following do not admit an elliptic fibration, but have a natural map whose generic fiber is an elliptic curve, we also give a less restrictive definition (the one of almost elliptic fibration) which is useful to describe our situation.

\begin{definition}
Let $Y$ be a 3-fold and $R$ be a surface. We will say that a surjective map with connected fibers $\cE: Y \ra R$ is an \emph{elliptic fibration} if:
\begin{enumerate}
\item the generic fiber of $\cE$ is a smooth genus one curve;
\item a section of $\cE$ is given, i.e.\ there exists a map $s: R \ra Y$ such that $\cE \circ s = \id$;
\item all the fibers of $\cE$ are of dimension 1.
\end{enumerate}
\end{definition}


\begin{definition}
Let $Y$ be a 3-fold and $R$ be a (possibly singular) surface. We will say that a surjective map with connected fibers $\pi: Y \ra R$ is an \emph{almost elliptic fibration} if:
\begin{enumerate}
\item the generic fiber of $\pi$ is a smooth genus one curve;
\item a rational section is given, i.e.\ there exists a rational map $s: R \dashrightarrow Y$ such that $\pi \circ s = \id$ on the domain of $s$.
\end{enumerate}
\end{definition}

\begin{rem}
We observe that if $\pi: Y \ra R$ is an almost elliptic fibration, then we do not require that all the fibers have dimension 1, and in fact we accept the presence of divisors contained in a fiber.
\end{rem}

\subsection{Hirzebruch surfaces}\label{sec: Hirzebruch}
In the following we will construct elliptic fibrations over Calabi--Yau 3-folds whose bases are one of the following surfaces: $\PP^2$, $\PP^1 \times \PP^1$ or $\F_n$, the Hirzebruch surface. For this reason we recall some results on the Hirzebruch surfaces.

Let $\F_n$ denote the Hirzebruch surface $\F_n := \PP(\cO_{\PP^1}(-n) \oplus \cO_{\PP^1})$, $n\in\N$. These surfaces are toric varieties, whose fan has four edges (\cite{F})
\[\begin{array}{ll}
v_s = (1, 0), & v_t = (-1, n),\\
v_y = (0, -1), & v_z = (0, 1),
\end{array}\]
and four maximal cones
\[\begin{array}{ll}
\Cone(v_s, v_z), & \Cone(v_z, v_t),\\
\Cone(v_t, v_y), & \Cone(v_y, v_s).
\end{array}\]
We can describe $\F_n$ also as a quotient space (\cite[$\S$5]{CLS}): it is the quotient of $\C^4_{(s, t, y, z)} \smallsetminus \{ s = t = 0, y = z = 0 \}$ by the action of $\C^* \times \C^*$
\[(\lambda, \mu)(s, t, y, z) = (\lambda s, \lambda t, \lambda^n \mu y, \mu z)\]
and so we can use $(s: t: y: z)$ as global homogeneous toric coordinates on $\F_n$.\\
From the fan we can also see that the Picard group of $\F_n$ is generated by the four divisors $D_s$, $D_t$, $D_y$ and $D_z$, with the relations
\[D_s \equiv D_t, \qquad D_y \equiv n D_t + D_z,\]
and so
\[\Pic \F_n = \Z \cdot D_t \oplus \Z \cdot D_z.\]
The intersection properties of these divisors are $D_t^2 = 0$, $D_z^2 = -n$ and $D_t D_z = 1$.
We observe that $D_y^2 = n$ and so we call $D_y$ the positive curve, and $D_z$ the negative curve.

We recall $K_{\F_{n}} = -(n + 2) D_t - 2 D_z$.

\begin{rem}\label{rem: autom on Hirz}
Every Hirzebruch surface admits an automorphism of order $d$, for every $d \in \N$, whose quotient is another Hirzebruch surface. Indeed let us consider
\[\alpha: (s: t: y: z) \longmapsto (s: t: \zeta_d y: z),\]
where $\zeta_d$ denotes a $d$-th primitive root of unity. Then $\alpha$ is an automorphism of order $d$ on $\F_n$, whose fixed locus consists of the two disjoint rational curves $y = 0$ and $z=0$ respectively.

The quotient of $\F_n$ by the action of this automorphism is another Hirzebruch surface:
\[\begin{array}{cccc}
q: & \F_n & \longrightarrow & \F_{dn}\\
   & (s: t: y: z) & \longmapsto & (s: t: y^d: z^d).
\end{array}\]
In particular, $\F_{2n}$ is the quotient of $\F_n$ by the involution $(s: t: y: z) \longmapsto (s: t: -y: z)$.
\end{rem}

\section{Calabi--Yau 3-folds of Borcea--Voisin type}

Here we describe the main construction of this paper and we summarize some of our main results (see Propositions \ref{prop: Hodge numbers of CY of BV type}, \ref{prop: h21=m-1 maximal automorphism}, \ref{prop: existences of BV maximal}). 

At least in the cases $n = 2$ and $n = 3$ the construction is well known, and in particular it was introduced in case $n = 2$ by Borcea and by Voisin, see \cite{B} and \cite{V} respectively. In case $n = 3$ it is extensively studied by Rohde in \cite{R}. Several generalizations of such construction are proposed, see for example in \cite{D2} and \cite{G}. Here we describe one of them.

First we recall an essential result on the existence of certain crepant resolutions. Let $Z$ be a 3-fold with trivial canonical bundle and let $G$ be a finite group of automorphisms of $Z$ which preserves the period. This implies that for every $g\in G$, the action of $g^*$ on the tangent space at a fixed point is represented by a diagonal matrix in $SL(3)$. Under this condition the following holds:

\begin{proposition}[\cite{Y} \cite{Bat2}]\label{prop: existence crepant resolution}
The singular 3-fold $Z/G$ admits a crepant resolution, and the Hodge numbers of any such a resolution do not depend on the crepant resolution we are considering.
\end{proposition}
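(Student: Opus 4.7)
The plan is to treat the two assertions separately, and in both cases to reduce the problem to standard (deep) results on three-dimensional Gorenstein quotient singularities.

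For the existence of a crepant resolution, I would start by translating the global hypothesis into a local statement. Since $G$ preserves the period of $Z$, at every fixed point $z\in Z$ the stabilizer $G_z$ must act on $T_zZ$ via a representation into $SL(3,\C)$: otherwise the local generator of $\Omega^3_Z$ near $z$ would be scaled non-trivially, contradicting the preservation of $\omega_Z$. Consequently $Z/G$ has only Gorenstein quotient singularities, analytically locally of the form $\C^3/H$ with $H\subset SL(3,\C)$ a finite subgroup. For such singularities in dimension three, the existence of a crepant resolution is classical: one may glue Roan's explicit case-by-case toric constructions over the finite list of conjugacy classes of finite subgroups of $SL(3,\C)$, or uniformly produce a resolution as the $G$-Hilbert scheme $\mathrm{Hilb}^{G}(Z)$ in the style of Bridgeland--King--Reid. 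Either way, one obtains a smooth 3-fold $\widetilde{Z/G}\to Z/G$ with trivial canonical bundle.

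For the independence of the Hodge numbers, I would observe that any two crepant resolutions $X_1,X_2\to Z/G$ are smooth 3-folds with trivial canonical bundle, birational to each other (both being birational to $Z/G$). By Kawamata's theorem on minimal models in dimension three, $X_1$ and $X_2$ are then connected by a finite chain of flops, and flops between smooth Calabi--Yau 3-folds preserve the Calabi--Yau property at each step. It therefore suffices to compare the Hodge numbers of two Calabi--Yau 3-folds related by a single flop: Batyrev's theorem first yields equality of Betti numbers via $p$-adic integration and the Weil conjectures, and the subsequent motivic-integration refinement of Kontsevich and Batyrev upgrades this to equality of all Hodge numbers $h^{p,q}$.

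The main obstacle is conceptual rather than computational: there is essentially no calculation to perform, so the real work is in recognizing that both statements are packaged from existing black-box theorems and in checking their hypotheses. The genuinely delicate point is the passage from the global assumption \emph{``$G$ preserves the period''} to the local conclusion $G_z\hookrightarrow SL(3,\C)$, which in turn guarantees the Gorenstein/canonical nature of the singularities of $Z/G$; a secondary point is to check that each crepant resolution is K\"ahler, so that the Hodge-theoretic comparison underlying the Batyrev--Kontsevich theorem is applicable.
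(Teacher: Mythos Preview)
The paper does not actually prove this proposition: it is stated as a result quoted from the references \cite{Y} and \cite{Bat2}, with the preceding sentence already recording (without proof) that period-preservation forces the stabilizer representations to land in $SL(3)$. There is thus no argument in the paper to compare against.

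Your sketch is a correct and reasonable unpacking of what lies behind those citations. The reduction to local $SL(3,\C)$ quotient singularities is exactly the step the paper asserts in the line before the proposition; the existence of a crepant resolution then comes from the three-dimensional McKay story (Roan's case-by-case, or uniformly via BKR), and the independence of Hodge numbers from Batyrev's theorem on birational Calabi--Yau varieties (refined by Kontsevich's motivic integration). One small comment: your intermediate step invoking Kawamata to connect two crepant resolutions by flops is not strictly needed---Batyrev's result applies directly to any two smooth projective birational varieties with trivial canonical bundle, without passing through a chain of flops. Also, the K\"ahler hypothesis you flag is harmless here, since in the paper's applications $S\times E$ is projective and the crepant resolutions constructed are explicit blow-ups and contractions, hence projective.
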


\begin{proposition}\label{prop: the construction}
Let $S$ be a K3 surface admitting a purely non-symplectic automorphism $\alpha_S$ of order $n$ such that $\alpha^*_S(\omega_S) = \zeta_n \omega_S$. Let $E$ be an elliptic curve admitting an automorphism $\alpha_E$ such that  $\alpha_E^*(\omega_E) = \zeta_n \omega_E$. Then $n = 2, 3, 4, 6$ and $(S \times E)/(\alpha_S \times \alpha_E^{n - 1})$ is a singular variety which admits a desingularization which is a Calabi--Yau 3-fold.
\end{proposition}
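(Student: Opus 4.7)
The plan is to establish the three assertions in order: the restriction $n\in\{2,3,4,6\}$, the fact that $\beta := \alpha_S \times \alpha_E^{n-1}$ preserves the holomorphic 3-form on $S\times E$, and the existence of a crepant resolution satisfying the Calabi--Yau conditions from the paper's definition.

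The first point follows at once from Proposition \ref{prop: autom elliptic curves} applied to $\alpha_E$, whose action on the period $\omega_E$ is by a primitive $n$-th root of unity.

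For the second, the key computation is
\[\beta^*(\omega_S\wedge\omega_E)=(\alpha_S^*\omega_S)\wedge((\alpha_E^{n-1})^*\omega_E)=\zeta_n\,\omega_S\wedge\zeta_n^{n-1}\omega_E=\omega_S\wedge\omega_E,\]
so $\beta$ preserves the period of $S\times E$. Consequently, at every fixed point of any nontrivial power $\beta^k$ the induced action on the tangent space is diagonal with determinant one, hence lies in $SL(3,\C)$. The quotient $Y:=(S\times E)/\langle\beta\rangle$ therefore has trivial canonical sheaf; it is singular along the image of the fixed loci of the powers $\beta^k$, a locus which is nonempty in all the cases relevant to the paper (the sole exception being the Enriques-type situation where $\alpha_S$ acts freely on $S$, in which case $\beta$ itself acts freely and $Y$ is already smooth). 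Proposition \ref{prop: existence crepant resolution} now produces a crepant resolution $\pi\colon X \to Y$ with $K_X\cong\cO_X$.

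It remains to verify that $h^{1,0}(X)=h^{2,0}(X)=0$. Because the exceptional locus of a crepant resolution of a Gorenstein $SL(3,\C)$-quotient singularity contributes only to classes of type $(p,q)$ with $p,q\geq 1$, the pullback $\pi^*$ identifies $H^{p,0}(X)$ with $H^{p,0}(Y)\cong H^{p,0}(S\times E)^{\langle\beta\rangle}$ for $0<p<3$. By Künneth, together with $H^{1,0}(S)=0$ and $H^{2,0}(E)=0$, one has $H^{1,0}(S\times E)=\langle\omega_E\rangle$ and $H^{2,0}(S\times E)=\langle\omega_S\rangle$; on these one-dimensional spaces $\beta^*$ acts by $\zeta_n^{n-1}$ and $\zeta_n$ respectively, both non-trivial, so the invariants vanish. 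Meanwhile $\omega_S\wedge\omega_E$ is $\beta$-invariant and descends to a nowhere-vanishing 3-form on $X$. The main subtlety in this program is precisely the identification $H^{p,0}(X)\cong H^{p,0}(S\times E)^{\langle\beta\rangle}$, which I would not reprove but invoke from the standard theory of crepant resolutions of Gorenstein quotient singularities (for instance via Batyrev's string-theoretic Hodge numbers or the McKay correspondence), thereby concluding that $X$ meets the paper's definition of a Calabi--Yau 3-fold.
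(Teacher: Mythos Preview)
Your proof is correct and follows the same outline as the paper's: restrict $n$ via Proposition~\ref{prop: autom elliptic curves}, check that $\beta$ preserves $\omega_S\wedge\omega_E$, invoke Proposition~\ref{prop: existence crepant resolution} for a crepant resolution, and then verify $h^{1,0}=h^{2,0}=0$ by showing the $\beta$-invariants in $H^{1,0}(S\times E)=\langle\omega_E\rangle$ and $H^{2,0}(S\times E)=\langle\omega_S\rangle$ vanish.

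The only point of departure is how you justify $H^{p,0}(X)\cong H^{p,0}(S\times E)^{\langle\beta\rangle}$, which you call the ``main subtlety'' and handle by appealing to Batyrev's string-theoretic Hodge numbers or the McKay correspondence. The paper's argument here is much lighter: the numbers $h^{i,0}$ are birational invariants of smooth projective varieties, so any resolution of $(S\times E)/\beta$ has $h^{i,0}$ equal to $\dim H^{i,0}(S\times E)^{\langle\beta\rangle}$. Crepancy plays no role in this particular step, and the heavy machinery, while not incorrect, is unnecessary.
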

\begin{proof}
The condition on $n$ follows by Proposition \ref{prop: autom elliptic curves}.\\
The 3-fold $S \times E$ has trivial canonical bundle and a generator of $H^{3, 0}(S \times E, \C)$ is $\omega_S \wedge \omega_E$. By construction $\alpha_S \times \alpha_E^{n - 1}$ preserves the period, hence there exists a crepant resolution of $(S \times E) / (\alpha_S \times \alpha_E^{n - 1})$, i.e.\ a resolution with trivial canonical bundle, by Proposition \ref{prop: existence crepant resolution}. Since $H^{2, 0}(S \times E) = \langle \omega_S \rangle$ and $H^{1, 0}(S \times E) = \langle \omega_E \rangle$ are not preserved by $\alpha_S \times \alpha_E^{n-1}$, and since $h^{i, 0}$ are birational invariant for any $i$, the Hodge numbers $h^{1, 0}$ and $h^{2, 0}$ of any resolution of $(S \times E) / (\alpha_S \times \alpha_E^{n - 1})$ are trivial. Hence there exists a resolution of $(S \times E) / (\alpha_S \times \alpha_E^{n - 1})$ which is a Calabi--Yau 3-fold.
\end{proof}

\begin{definition}
Any crepant resolution of $(S \times E) / (\alpha_S \times \alpha_E^{n - 1})$ will be called a \emph{Calabi--Yau 3-fold of Borcea--Voisin type}.
\end{definition}

\begin{proposition}\label{prop: Hodge numbers of CY of BV type} The Hodge numbers of any Calabi--Yau 3-fold of Borcea--Voisin type depend only on the topological properties of the fixed loci of $\alpha_S^j$, for $j=1,\ldots, n-1$. \end{proposition}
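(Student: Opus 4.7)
The plan is to reduce the computation of the Hodge numbers to the stringy (Chen--Ruan) Hodge numbers of the global quotient orbifold $Z := (S\times E)/G$ with $G = \langle \sigma\rangle$, $\sigma := \alpha_S\times\alpha_E^{n-1}$. By Proposition \ref{prop: existence crepant resolution} the Hodge numbers of any crepant resolution $X$ of $Z$ are intrinsic to $Z$, and by the Batyrev--Dais/Yasuda theorem on stringy invariants of Gorenstein quotient singularities they coincide with the orbifold Hodge numbers of $Z$; hence I may work with the formula
\[
h^{p,q}(X) \;=\; \sum_{j=0}^{n-1}\ \sum_{W\in \pi_0((S\times E)^{\sigma^j})} h^{p-a_j(W),\,q-a_j(W)}(W/G),
\]
where $a_j(W)$ is the age of $\sigma^j$ along $W$. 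The task is then to show that every ingredient on the right is dictated by the topology of $\Fix(\alpha_S^j)$, $j=1,\ldots,n-1$.

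Since $\sigma^j = \alpha_S^j \times \alpha_E^{j(n-1)}$, the summation domain factors as $(S\times E)^{\sigma^j}=\Fix(\alpha_S^j)\times \Fix(\alpha_E^{j(n-1)})$. The second factor, by Proposition \ref{prop: autom elliptic curves}, is either $E$ itself (when $j=0$) or a fixed number of isolated points determined by $n/\gcd(j,n)$; so the topology of the summation domain reduces to that of $\Fix(\alpha_S^j)$ together with the combinatorics of $(n,j)$. Next, the age $a_j(W)$ is a function of the weights with which $d\sigma^j$ acts on the normal bundle to $W$; these weights are prescribed $n$-th roots of unity forced by the local normal form of a purely non-symplectic cyclic automorphism on a K3 and on an elliptic curve, so $a_j(W)$ is determined by $(n,j)$ and by the stratum in which $W$ lies. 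The Hodge polynomial of each $W/G$ is in turn computed from the topology of $W$ and the residual cyclic action, whose order is again combinatorial; for fixed curves this is a Riemann--Hurwitz computation from the genus of the component and the number of higher-order fixed points lying on it. Finally, the $j=0$ summand is $h^{*,*}(S\times E)^G$, which by K\"unneth splits into $\alpha_S^*$- and $\alpha_E^*$-eigenspace contributions; by Proposition \ref{prop: dimension family K3} the former are determined by the Euler characteristics of the $\Fix(\alpha_S^j)$, and the latter by $n$ via Proposition \ref{prop: autom elliptic curves}.

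The main obstacle I anticipate is the bookkeeping of the finer stratification inside each $\Fix(\alpha_S^j)$: to feed Riemann--Hurwitz to a fixed curve I need to know not only its genus but also how many of the isolated fixed points of the higher powers $\alpha_S^{j'}$ ($j'$ a divisor of $j$) lie on it. This refined information is, however, still part of the topological datum of the collection $\{\Fix(\alpha_S^j)\}_{j=1}^{n-1}$, since those isolated points are precisely the components of $\Fix(\alpha_S^{j'})$ meeting $W$, so the argument closes. The verification then becomes a finite case analysis over the admissible values $n = 2,3,4,6$, which is essentially what the subsequent sections of the paper carry out explicitly.
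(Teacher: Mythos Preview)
Your approach is correct and takes a genuinely different route from the paper's. The paper proves the proposition by forward reference to the explicit case-by-case computations in Propositions \ref{prop: hodge numbers X2}, \ref{prop: Hodge numbers X3}, \ref{prop: Borcea Voisin 4}, \ref{prop: hodge numbers 6}: for each $n\in\{2,3,4,6\}$ a specific crepant resolution is built by iterated blow-ups and its Hodge numbers are read off from the exceptional divisors together with the invariant cohomology \eqref{formula: invariant h11 h21}. You instead invoke the identification of the Hodge numbers of any crepant resolution with the Chen--Ruan orbifold Hodge numbers of the quotient stack (this is indeed in the cited reference \cite{Y}, and is sharper than what Proposition \ref{prop: existence crepant resolution} records), and then argue that every term in the twisted-sector decomposition is visibly a function of the topology of the collection $\{\Fix(\alpha_S^j)\}_j$ together with the fixed datum $n$. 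This is more conceptual and in fact establishes the proposition without any case analysis; your closing sentence undersells your own argument, since the check over $n=2,3,4,6$ is needed only to extract the explicit formulas the paper wants for its tables, not to establish the qualitative dependence. One minor imprecision: the inner sum in your displayed formula should run over $G$-orbits of components of $(S\times E)^{\sigma^j}$ rather than over all components (equivalently, write $H^{p-a_j,\,q-a_j}\bigl((S\times E)^{\sigma^j}\bigr)^G$ with the age shift understood componentwise), otherwise components permuted by $G$ are overcounted; this does not affect the conclusion. The trade-off is clear: your route is cleaner for this statement in isolation, while the paper's explicit resolutions are needed anyway for the description of the maps $\mathcal{E}_n$ in Propositions \ref{prop: ell. fib. 2}--\ref{prop: ell. fib. 6}, so in context the forward reference is economical.
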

This Proposition follows immediately by the computations of the Hodge numbers of $X$ done in Propositions \ref{prop: hodge numbers X2}, \ref{prop: Hodge numbers X3}, \ref{prop: Borcea Voisin 4}, \ref{prop: hodge numbers 6} for $n=2,3,4,6$ respectively. Anyway it is based on a general idea:\\
Let $\widetilde{(S \times E) / (\alpha_S \times \alpha_E^{n - 1})}$ be a crepant resolution of $(S \times E) / (\alpha_S \times \alpha_E^{n - 1})$. Since $\widetilde{(S \times E) / (\alpha_S \times \alpha_E^{n - 1})}$ is a Calabi--Yau 3-fold, $h^{0, 0} = h^{3, 0} = 1$ and $h^{1, 0} = h^{2, 0} = 0$. The numbers $h^{1, 1}$ and $h^{2, 1}$ depend on the action of $\alpha_S \times \alpha_E^{n - 1}$ on $S \times E$. They are the sum of two contributions: one comes from the desingularization of the singular locus of $(S \times E) / (\alpha_S \times \alpha_E^{n - 1})$, the other comes from the cohomology of $S \times E$ which is invariant for $\alpha_S \times \alpha_E^{n - 1}$. Since the fixed loci of $\alpha_E^j$, $j=1,\ldots n-1$ are uniquely determined by $n$, the fixed loci of $(\alpha_S\times\alpha_E^{n-1})^j$ depends only the properties of the fixed loci of $\alpha_S^j$. The part of the cohomology which comes from the cohomology of $S\times E$ can be computed in this general setting:
\begin{align}\label{formula: invariant h11 h21}\begin{array}{rl}
H^{1, 1}(S \times E)^{\alpha_S \times \alpha_E^{n - 1}} = & (H^{0, 0}(S) \otimes H^{1, 1}(E))^{\alpha_S \times \alpha_E^{n - 1}}\oplus (H^{1, 1}(S) \otimes H^{0, 0}(E))^{\alpha_S \times \alpha_E^{n - 1}}\\
= & (H^{0, 0}(S) \otimes H^{1, 1}(E)) \oplus \left(H^{1, 1}(S)^{\alpha_S} \otimes H^{0, 0}(E)\right);\\
H^{2, 1}(S \times E)^{\alpha_S \times \alpha_E^{n - 1}} = & (H^{2, 0}(S) \otimes H^{0, 1}(E))^{\alpha_S \times \alpha_E^{n - 1}} \oplus (H^{1, 1}(S) \otimes H^{1, 0}(E))^{\alpha_S \times \alpha_E^{n - 1}}\\
= & H^{1, 1}(S)_{\zeta_n} \otimes H^{1, 0}(E).
\end{array}\end{align}
With the notation introduced in Definition \ref{notation K3 surfaces}, the dimension of these spaces are $1 + r$ and $m - 1$ respectively if $n \neq 2$, and $1 + r$ and $1 + (m - 1) = m$ if $n = 2$. We recall $r$ and $m$ depends only on the properties of the fixed loci of $\alpha_S^j$, by Proposition \ref{prop: dimension family K3}.

\begin{definition}\label{defi: alphaX}
Let us consider the automorphism $\alpha_S \times \id \in \Aut(S \times E)$. It clearly commutes with $\alpha_S \times \alpha_E^{n - 1}$ and so descends to an automorphism $\alpha_X$ of any Calabi--Yau 3-fold $X$ which desingularizes $(S \times E) / (\alpha_S \times \alpha_E^{n - 1})$.
\end{definition}
\subsection{Borcea--Voisin maximal families}
By choosing a K3 surface $S$ with a non-symplectic automorphism $\alpha_S$ and an elliptic curve with an automorphism $\alpha_E$ as in Propostion \ref{prop: the construction}, we produce a Calabi--Yau 3-fold, $X$, of Borcea--Voisin type. We now consider the family of Calabi--Yau 3-folds which deform $X$. By the Tian--Todorov theorem, the dimension of such a family is $h^{2, 1}(X)$. In general not all the members of this family are of Borcea--Voisin type.

\begin{definition}
A \emph{Borcea--Voisin maximal family} is a family of Calabi--Yau 3-folds such that the generic member of this family is of Borcea--Voisin type.
\end{definition}

\begin{proposition}\label{prop: h21=m-1 maximal automorphism}
Let $n = |\alpha_S|$ and let $\mathcal{F}_X$ be the family of $X$.\begin{enumerate} \item If $n = 2$ and $h^{2, 1}(X) = m$ ($m$ as in Definition \ref{notation K3 surfaces}), then $\mathcal{F}_X$ is a Borcea--Voisin maximal family. If $n = 3, 4, 6$  and $h^{2, 1}(X) = m - 1$, then $\mathcal{F}_X$ is Borcea--Voisin maximal. \item If $\mathcal{F}_X$ is Borcea--Voisin maximal, then $\alpha_X$ is a maximal automorphism and $\alpha_X(\omega_X)=\zeta_n\omega_X$.\item If $\mathcal{F}_X$ is Borcea--Voisin maximal and $n=3,4,6$, then $\mathcal{F}_X$ does not admit maximal unipotent monodromy. \item   If $\mathcal{F}_X$ is Borcea--Voisin maximal and $n=3,4,6$, then the variation of the Hodge structures of $\mathcal{F}_X$ depends only on the variation of the Hodge structures of the family of $S$. \end{enumerate}
\end{proposition}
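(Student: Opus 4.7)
The plan is to treat the four items in order, with (1) being a dimension comparison that then feeds directly into the remaining parts. For part (1), I would count the dimension of the locus of Borcea--Voisin 3-folds inside $\mathcal{F}_X$. By Propositions \ref{prop: autom elliptic curves} and \ref{prop: dimension family K3}, pairs $(S,\alpha_S)$ with fixed topological type of $\Fix_{\alpha_S^j}(S)$ form a family of dimension $m-1$, while the pair $(E,\alpha_E)$ varies in dimension $1$ for $n=2$ (the $j$-line) and $0$ for $n=3,4,6$ (where the curve is rigid). The explicit construction of $X_n$ in later sections is functorial in $(S,\alpha_S,E,\alpha_E)$, so yields a morphism from the BV parameter space into the Kuranishi space of $X$, whose dimension is $h^{2,1}(X)$ by Tian--Todorov. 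The numerical hypothesis forces this morphism to be dominant onto an irreducible component of $\mathcal{F}_X$, so the generic member of $\mathcal{F}_X$ is of Borcea--Voisin type.

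For parts (2) and (3), assume that $\mathcal{F}_X$ is Borcea--Voisin maximal. Each fiber of BV type carries the automorphism of Definition \ref{defi: alphaX}; these glue over a dense open in the base, so $\alpha_X$ extends to the local universal deformation and is therefore maximal. To compute its action on the period, pull $\omega_X$ back to $S\times E$, where it is a scalar multiple of $\omega_S\wedge\omega_E$; since $(\alpha_S\times\id)^*(\omega_S\wedge\omega_E)=\zeta_n\,\omega_S\wedge\omega_E$, we conclude $\alpha_X^*\omega_X=\zeta_n\omega_X$, finishing (2). Part (3) then follows immediately from Proposition \ref{prop: no unipotent monodromy} applied to $\alpha_X$ with $n\in\{3,4,6\}$.

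For part (4), the VHS on $H^3(X)$ is determined by $F^3=H^{3,0}(X)$ and $F^2=H^{3,0}(X)\oplus H^{2,1}(X)$. The first piece is generated by the pullback of $\omega_S\wedge\omega_E$; by formula \eqref{formula: invariant h11 h21} and the equality $h^{2,1}(X)=m-1$ coming from BV maximality (so that the resolution contributes nothing to $H^{2,1}$), the second piece equals $H^{1,1}(S)_{\zeta_n}\otimes H^{1,0}(E)$. Since $E$ is rigid for $n=3,4,6$, both $\omega_E$ and $H^{1,0}(E)$ are locally constant over the base, and hence the variation of the Hodge filtration on $H^3(X)$ is controlled entirely by the variation of the $\zeta_n$-eigen sub-VHS of $H^2(S,\C)$; equivalently the period map of $\mathcal{F}_X$ factors through that of the family of $S$.

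The main obstacle is the rigorous justification of the dimension-count in (1): one needs to know that the $X_n$-construction varies in families over the BV parameter space and that the resulting map to the Kuranishi space is injective on tangent spaces, so that the dimensions are genuinely comparable. This injectivity essentially amounts to recovering $(S,\alpha_S,E,\alpha_E)$ from $X$ up to finite ambiguity (for instance via the induced $\Z/n\Z$-action on $X$ together with the fibration $\mathcal{E}_n:X\to S/\alpha_S$), which requires some care but is an expected consequence of the explicit geometry developed in the later sections. Once this is in hand, parts (2)--(4) follow by the natural pullback arguments sketched above.
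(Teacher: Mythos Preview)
Your approach is essentially the same as the paper's: the same dimension count for (1), the same pullback of $\omega_S\wedge\omega_E$ for (2), the same appeal to Proposition~\ref{prop: no unipotent monodromy} for (3), and the same identification $H^{3,0}(X)\oplus H^{2,1}(X)\simeq\bigl(H^{2,0}(S)\oplus H^{1,1}(S)_{\zeta_n}\bigr)\otimes H^{1,0}(E)$ together with the rigidity of $E$ for (4). The one place you are more careful than the paper is your ``main obstacle'' paragraph: the paper simply asserts that equality of dimensions forces the generic deformation of $X$ to be of Borcea--Voisin type, without discussing injectivity of the map from the BV parameter space to the Kuranishi space, so your concern is legitimate but is not something the paper itself addresses.
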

\begin{proof}
The dimension of the family of K3 surfaces $S$ admitting the purely non-symplectic automorphism $\alpha_S$ of order $n$ is $m - 1$ (see Proposition \ref{prop: dimension family K3}). The dimension of the family of elliptic curves $E$ admitting the automorphism $\alpha_E$ as considered is 1 if $n = 2$, and 0 otherwise. Let us consider the family of 3-folds which deforms $S\times E$. Its dimension is $m = (m - 1) + 1$ if $n = 2$ and $m - 1$ otherwise. The dimension of the family $\mathcal{F}_X$ is $h^{2, 1}(X)$. So, if $h^{2, 1}(X) = m$ for $n = 2$ or $h^{2, 1}(X) = m - 1$ for $n = 3, 4, 6$, these two families have the same dimension, and thus the generic deformation of $X$ is a crepant resolution of $(S \times E) / (\alpha_S \times \alpha_E^{n - 1})$. In particular, we obtain Borcea--Voisin maximal families and the automorphism $\alpha_X$ is maximal (since it is defined on every deformation of $(S\times E)/(\alpha_S\times\alpha_E^{n-1})$). By the Proposition \ref{prop: no unipotent monodromy}, if $\mathcal{F}_
X$ is maximal Borcea--Voisin and $n=3,4,6$, $\mathcal{F}_X$ does not admit maximal unipotent monodromy. If $n=3,4,6$, then $E$ is a rigid curve. Moreover, if $\mathcal{F}_X$ is a Borcea--Voisin maximal family, $H^{3,0}(X)\oplus H^{2,1}(X)=\left(H^{2,0}(S)\oplus H^{1, 1}(S)_{\zeta_n}\right) \otimes H^{1, 0}(E)=\left(H^{2,0}(S)\oplus H^{1, 1}(S)_{\zeta_n}\right) \otimes \C$ hence the variation of the Hodge structures of $\mathcal{F}_X$ depends only on the variation of the Hodge structures of $S$.
\end{proof}

Under certain hypothesis, if $S$ is a K3 surfaces with a purely non--symplectic automorphism, the variation of the Hodge structures of $S$ is essentially the variation of the Hodge structures of a family of curves.  So, if we are in case $(4)$ of the previous proposition and moreover the variation of the Hodge structures of $S$ depends only on the variation of the Hodge structures of a family of curves, then the variation of the Hodge structures of $X$ is essentially the variation of the Hodge structures of a family of curves. In particular the Picard--Fuchs equation of $X$ is the Picard--Fuchs equation of a family of curves. Examples of this phenomenon are given in \cite{GvG}, \cite{G}, in Remarks \ref{rem: variationa Hodge structure 3}, \ref{rem: no mum 4}, and in Section \ref{sec: example order 6 no mum}.
\begin{rem}\label{rem: other maximal automorphisms}
Let us assume that the family of $X$ is a Borcea--Voisin maximal family.  We recall  that $X$ is a desingularization of $(S\times E)/(\alpha_S\times\alpha_E)$ where $S$ lies in the family of the K3 surfaces admitting a non-symplectic automorphism $\alpha_S$ with certain properties. Let us assume that every K3 surface in the family of $S$ admits an automorphism $\sigma$ which commutes with $\alpha_S$. Then $\sigma \times \id \in \Aut(S \times E)$ induces a maximal automorphism of the family of $X$ (which is in general not equal to $\alpha_X$).
\end{rem}

\begin{proposition}\label{prop: existences of BV maximal} Let $X$ be a Calabi--Yau 3-fold of Borcea--Voisin type and $\mathcal{F}_X$ its family. 
If $\alpha_S^j\in \Aut(S)$ does not fix curves of positive genus for every $j=1,\ldots n-1$, then $\mathcal{F}_X$ is a Borcea--Voisin maximal family. 

There exists at least one Borcea--Voisin maximal family for every $n=2,3,4,6$.\end{proposition}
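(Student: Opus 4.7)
The plan is to reduce the claim to the numerical criterion of Proposition \ref{prop: h21=m-1 maximal automorphism}: one must show that under the hypothesis on fixed curves, we have $h^{2,1}(X) = m-1$ if $n = 3, 4, 6$ and $h^{2,1}(X) = m$ if $n = 2$. I would then split $h^{2,1}(X)$ into the two contributions already identified in the discussion following Proposition \ref{prop: Hodge numbers of CY of BV type}: the part coming from the invariant cohomology of $S \times E$, and the part coming from the crepant resolution of the singularities of $(S \times E)/(\alpha_S \times \alpha_E^{n-1})$. The first piece has already been computed in \eqref{formula: invariant h11 h21} and equals exactly $m-1$ (respectively $m$ when $n=2$), so the whole point is to show that the resolution piece vanishes under the assumption that no $\alpha_S^j$ fixes a curve of positive genus.

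The next step is an analysis of the fixed locus of $\alpha_S^j \times \alpha_E^{j(n-1)}$ on $S \times E$ for each $j = 1, \dots, n-1$. Since $\alpha_E$ acts non-trivially on the period, $\Fix(\alpha_E^k)$ is always a finite set of points for $k \not\equiv 0 \pmod n$. Therefore each component of the fixed locus on $S\times E$ is either (a) an isolated point, coming from an isolated fixed point of $\alpha_S^j$ paired with a fixed point of $\alpha_E^{j(n-1)}$, or (b) a curve $C \times \{p\}$, where $C$ is a curve component of $\Fix(\alpha_S^j)$ and $p$ is a fixed point of $\alpha_E^{j(n-1)}$. I would then invoke the standard local model for crepant resolutions of cyclic quotient singularities in dimension $3$: along an isolated fixed point, the exceptional locus is a chain of toric surfaces contributing only to $h^{1,1}$; along a curve of singularities $C$, the exceptional divisor is a disjoint union of $\PP^1$-bundles over $C$, whose $h^{2,1}$ equals $g(C)$, while the remaining classes are again of type $(1,1)$. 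Thus the total contribution of the resolution to $h^{2,1}$ is $\sum_{j, C} g(C)$, the sum running over all positive-dimensional components $C$ of $\Fix(\alpha_S^j)$ weighted by the number of fixed points of $\alpha_E^{j(n-1)}$.

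Under the hypothesis that no $\alpha_S^j$ fixes any curve of positive genus, every such $C$ is rational and this sum is zero; combined with the invariant-cohomology piece, this gives exactly $h^{2,1}(X) = m-1$ (or $m$ for $n=2$), as required. The first statement then follows directly from Proposition \ref{prop: h21=m-1 maximal automorphism}. The main obstacle here will be justifying cleanly that each local resolution contributes as described without going into full case-by-case explicit resolutions; I would handle this by referring to the standard toric/McKay description of crepant resolutions of transverse $A_{n-1}$-singularities along smooth curves and of cyclic quotient $3$-fold points, which is independent of the arithmetic of $n \in \{2,3,4,6\}$.

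For the existence claim, it suffices to exhibit, for each $n \in \{2,3,4,6\}$, one K3 surface $S$ with a purely non-symplectic automorphism $\alpha_S$ of order $n$ such that $\Fix(\alpha_S^j)$ contains only rational curves and isolated points for every $j = 1, \dots, n-1$. For $n=2$ one can take Nikulin's case in which $\Fix(\iota_S)$ consists only of rational curves (for instance the K3 whose $\iota_S$-invariant lattice is $U\oplus E_8\oplus E_8$, with ten fixed rational curves); for $n=3$ one uses the Artebani--Sarti/Taki classification \cite{AS1}, \cite{T}; for $n=4$ and $n=6$ one cites the existence results of \cite{AS2} and \cite{D} respectively, all of which provide examples whose fixed loci of every power consist of rational curves and points. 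Since the first statement then applies, each of these produces a Borcea--Voisin maximal family for the corresponding $n$.
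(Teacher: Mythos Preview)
Your argument for the first statement follows the same path as the paper---reducing to Proposition \ref{prop: h21=m-1 maximal automorphism} by showing $h^{2,1}(X)=m-1$ (or $m$ if $n=2$)---but where the paper simply reads this off from the explicit Hodge-number computations in Propositions \ref{prop: hodge numbers X2}, \ref{prop: Hodge numbers X3}, \ref{prop: Borcea Voisin 4}, \ref{prop: hodge numbers 6}, you argue abstractly about the crepant resolution. Your conclusion is correct, but the intermediate claim that the resolution contributes $\sum_{j,C} g(C)$ to $h^{2,1}$ is not quite accurate for $n=4,6$: when a curve $C$ is fixed only by a proper power of $\alpha_S$ and is merely invariant under $\alpha_S$, the relevant exceptional $\PP^1$-bundles lie over the quotient $C/\alpha_S$, not over $C$ itself (cf.\ the curves of second type in Section \ref{sec: construction X4}, or the terms $g(B/\beta_S)$, $g(F_i/\beta_S)$ in Proposition \ref{prop: hodge numbers 6}). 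This does not hurt you, since quotients of rational curves are rational, but the ``transverse $A_{n-1}$'' picture you invoke is too coarse to capture this stratification by stabilizer type.

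There is, however, a genuine gap in the existence claim for $n=6$. You assert that \cite{D} provides a K3 surface whose order-$6$ non-symplectic automorphism has every power fixing only rational curves and points. It does not: in all the examples of \cite{D} treated here the involution $\beta_S^3$ fixes a curve of positive genus (see Table \ref{table: Hodge numbers X6}, lines 1--18, where $g(F)\geq 1$ throughout). The paper must construct its own example (Section \ref{sec: example order 6 no mum}, Example \ref{example no mum X6}) by specializing $p_{12}(t)$ so that the trisection fixed by $\beta_S^3$ becomes rational as well. Without supplying such an example, your proof of the second statement is incomplete for $n=6$.
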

\proof The first statement follows by Proposition \ref{prop: h21=m-1 maximal automorphism} and by the computations of the Hodge numbers of $X$ in  Propositions \ref{prop: hodge numbers X2}, \ref{prop: Hodge numbers X3}, \ref{prop: Borcea Voisin 4}, \ref{prop: hodge numbers 6} for $n=2,3,4,6$ respectively. It is known that there exist purely non-symplectic automorphisms $\alpha_S$ of order 2,3 and 4 such that $\alpha_S^j$ does not fix curves of positive genus for every $j=1,\ldots n-1$ (see \cite{N}, \cite{AS1},\cite{AS2} and \cite{G} respectively; for $n=4$ see also  Table \ref{table: Hodge numbers case 1}, \ref{table: only rational curves} and Remark \ref{rem: no mum 4}). In case $n=6$ we construct an explicit example in Section \ref{sec: example order 6 no mum}.
\endproof

\subsection{Fibrations on Calabi--Yau 3-folds of Borcea--Voisin type}
In the next sections we will construct explicitly one crepant resolution $X$ of $(S\times E)/(\alpha_S\times \alpha_E^{n-1})$ and so a particular Calabi--Yau 3-fold of Borcea--Voisin type, called of type $X_n$. In the following proposition we summarize some geometric properties of these 3-folds. 

\begin{proposition}\label{prop: fibrations on Xn} Let $\mathcal{G}_n:X\ra E/\alpha_E^{n-1}\simeq\mathbb{P}^1$ and $\mathcal{E}_n:X\ra S/\alpha_S$ be the maps induced by $(S\times E)/(\alpha_S\times\alpha_E^{n-1})\ra E/\alpha_E^{n-1}$ and  $(S\times E)/(\alpha_S\times\alpha_E^{n-1})\ra S/\alpha_S$ respectively on $X$. Let $g:E\ra E/\alpha_E^{n-1}$ and $q:S\ra S/\alpha_S$ be the quotient maps.

The map $\mathcal{G}_n$ is an isotrivial K3 surfaces fibration. The fiber $\mathcal{G}_n^{-1}(g(P))$ is reducible if and only if $P\in E$ is a point with non trivial stabilizer for the action of $\alpha_E$ on $E$.

The map $\mathcal{E}_n$ is an almost elliptic fibration. More precisely:\\ 
$\bullet$ the fiber $\mathcal{E}_n^{-1}(q(Q))$ is isomorphic to $E$ if and only if $Q\in S$ has trivial stabilizer for the action of $\alpha_S$ on $S$;\\
$\bullet$ the fiber $\mathcal{E}_n^{-1}(q(Q))$ is singular of dimension 1 if and only if $Q\in S$ has a non trivial stabilizer for the action of $\alpha_S$ on $S$, but $Q$ is not an isolated fixed point for $\alpha_S^j$ for any $j=1,\ldots n-1$; in this case $\mathcal{E}_n^{-1}(q(Q))$ is of Kodaira type;\\
$\bullet$ the fiber $\mathcal{E}_n^{-1}(q(Q))$ contains divisors if and only if $Q\in S$ is an isolated fixed point for $\alpha_S^j$ for at least one $j\in\{1,\ldots n-1\}$.\end{proposition}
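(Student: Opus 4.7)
The plan is to analyse both fibrations in two stages: compute the fibres in the singular quotient $Y := (S\times E)/(\alpha_S\times \alpha_E^{n-1})$, and then track how they are modified by the crepant resolution $X\to Y$. Set $G := \langle \alpha_S\times \alpha_E^{n-1}\rangle \cong \Z/n\Z$; since $\gcd(n-1,n)=1$, the induced $G$-action on $E$ coincides with $\langle\alpha_E\rangle$, so $E/G\cong\PP^1$. The projections $S\times E\to E$ and $S\times E\to S$ are $G$-equivariant, hence descend to $Y\to E/G$ and $Y\to S/G$, which composed with $X\to Y$ yield $\mathcal{G}_n$ and $\mathcal{E}_n$. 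The fibre of $Y\to E/G$ over $g(P)$ is $(S\times G\cdot P)/G\cong S/\mathrm{Stab}_G(P)$, and the fibre of $Y\to S/G$ over $q(Q)$ is $E/\mathrm{Stab}_G(Q)$; the singular locus of $Y$ is the image of $\bigcup_{k=1}^{n-1}\Fix(\alpha_S^k)\times\Fix(\alpha_E^k)$.

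For $\mathcal{G}_n$, when $P$ has trivial $G$-stabilizer no point of the orbit $G\cdot P$ is fixed by any non-identity element of $G$, so $S\times G\cdot P$ avoids the fixed locus of $G$ on $S\times E$; the fibre therefore lifts unchanged from $Y$ to $X$ as a K3 surface isomorphic to $S$, which proves isotriviality. When $H:=\mathrm{Stab}_G(P)$ is non-trivial, the fibre $S/H$ carries quotient singularities at the images of $\Fix(H)\subset S$, and the crepant resolution replaces these by chains of exceptional divisors which become extra irreducible components of the fibre, making it reducible.

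For $\mathcal{E}_n$ the same analysis shows that the fibre over $q(Q)$ is isomorphic to $E$ whenever $Q$ has trivial stabilizer. A rational section is obtained by $q(Q)\mapsto [(Q,0)]$: this is well defined on $Y$ because $\alpha_E$ fixes the origin of $E$, and lifts to a rational section of $X$ defined away from the singular locus of $Y$. If $Q$ has non-trivial stabilizer but lies on a fixed curve of every $\alpha_S^j$ that stabilizes it, the singular locus of $Y$ meets the fibre along the images of curves $C\times\{P\}$ with $C\subset\Fix(\alpha_S^j)$ and $P\in\Fix(\alpha_E^j)$; the transverse cyclic quotient singularities along these curves are resolved by chains of rational curves in $X$, and combining them with the reduction $E/\mathrm{Stab}_G(Q)\cong\PP^1$ of the original fibre yields a one-dimensional singular fibre of Kodaira type. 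If instead $Q$ is an isolated fixed point of some $\alpha_S^j$, then for $P\in\Fix(\alpha_E^j)$ the point $[(Q,P)]\in Y$ is an isolated $3$-fold cyclic quotient singularity, and any crepant resolution necessarily introduces exceptional divisors over it; these are contained in the fibre of $\mathcal{E}_n$ over $q(Q)$, producing the divisors asserted in the statement.

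The main obstacle is verifying in the non-isolated case that the fibre is genuinely of Kodaira type and identifying which type occurs: this depends on the order of $\alpha_S^j$, on its action on the normal bundle of the fixed curve inside $S$, and on the local behaviour of $\alpha_E^j$ at the relevant point of $E$. The precise identification will be carried out case by case in Sections \ref{sec: construction X2}, \ref{sec: construction X3}, \ref{sec: construction X4} and \ref{subsec: X6}, where the local model resolution of a cyclic quotient of $A_{d-1}\times\text{(curve)}$-type is made explicit for each admissible $n$.
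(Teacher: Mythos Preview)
Your proposal is correct and follows essentially the same approach as the paper: analyse the fibres first in the singular quotient $Y=(S\times E)/G$ and then track the effect of the crepant resolution, deferring the precise identification of the Kodaira types of $\mathcal{E}_n$ to the case-by-case analysis in the later sections. The paper's own proof is in fact considerably terser than yours---it gives only a two-line argument for $\mathcal{G}_n$ and refers the entire treatment of $\mathcal{E}_n$ to Propositions \ref{prop: ell. fib. 2}, \ref{prop: ell. fib. 3}, \ref{prop: ell. fib. 4}, \ref{prop: ell. fib. 6}---so your write-up of the common framework (the fibre $S/\mathrm{Stab}_G(P)$, the rational section through the origin of $E$, the dichotomy between curve-type and isolated fixed points) is a useful expansion rather than a different route.
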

\proof The fibers of $\mathcal{F}_n$ are clearly equidimensional and the smooth ones are isomorphic to $S$. The ones which are not smooth contain divisors which come from the resolution of $(S\times E)/(\alpha_S\times\alpha_E^{n-1})$ and hence project to points in $E/\alpha^{n-1}$ which are branch points.

The map $\mathcal{E}_n$ will be considered in the Propositions \ref{prop: ell. fib. 2}, \ref{prop: ell. fib. 3}, \ref{prop: ell. fib. 4}, \ref{prop: ell. fib. 6}.\endproof

We observe that $\mathcal{E}_n$ is an elliptic fibration if and only if $\alpha_S^j$ does not fix isolated points for every $j\in\{1,\ldots n-1\}$. For example, this surely happens if $n=2$.

\section{The ``original'' Borcea--Voisin construction: order 2}

In this section we assume $|\alpha_S| = |\alpha_E| = 2$.  We present the explicit construction of a crepant resolution, of type $X_2$, of $(S \times E) / (\alpha_S \times \alpha_E)$ following \cite[Section 1]{V}.

\subsection{The construction of Calabi--Yau 3-folds of type $X_2$}\label{example: BorceaVoisin}\label{sec: construction X2}

Let $S$ be a K3 surface with a non-symplectic involution $\iota_S$. These K3 surfaces are classified in \cite{N}. The fixed locus of $\iota_S$ on $S$ is either empty or consists of $N$ curves, and $\iota_S$ linearizes near the fixed locus to the matrix $\diag(-1, 1)$. Let $E$ be an elliptic curve and $\iota_E$ its hyperelliptic involution. The fixed locus of $\iota_E$ consists of 4 points, $P_1$, $P_2$, $P_3$ and $P_4$, and clearly the local action of $\iota_E$ near the fixed points is $-1$.
Thus, the fixed locus of $\iota := \iota_S \times \iota_E$ on $S \times E$ consists of $4 N$ curves and the local action near the fixed locus linearizes to $\diag(-1, 1, -1)$. The singularities of $(S \times E) / \iota$ are the images of the curves in $S \times E$ fixed by $\iota$ and a crepant resolution of such singularities can be obtained blowing up each of these curves. More precisely the following diagram commutes:
\begin{eqnarray}\label{diag: BV construction}
\begin{array}{ccccc}
\iota \circlearrowright & S \times E & \stackrel{\beta}{\leftarrow} & \widetilde{S\times E} & \circlearrowleft\widetilde{\iota}\\
 & \downarrow & & \downarrow\\
 & (S \times E) / \iota & \leftarrow & \widetilde{(S \times E)} / \widetilde{\iota} &\simeq X
\end{array}
\end{eqnarray}
where $\beta: \widetilde{S \times E} \ra S \times E$ is the blow up of $S \times E$ in the fixed locus $\Fix_{\iota}(S \times E)$, $\widetilde{\iota}$ is the involution induced on $S \times E$ by $\iota$ and the vertical arrows are the quotient maps. Thus, a desingularization of $(S \times E) / \iota$ is constructed blowing up the fixed locus $\Fix_{\iota}(S \times E)$ and then considering the quotient by the induced automorphism.
In order to compute the Hodge numbers of the Calabi--Yau 3-fold we first observe that
\[\begin{array}{l}
H^{i}(\widetilde{S \times E}) = H^{i}(S \times E), \qquad i = 0,1,\\
H^{2}(\widetilde{S \times E}) = H^{2}(S \times E) \bigoplus \oplus_{i = 1}^{4N} H^0(D_i),\\
H^{3}(\widetilde{S \times E}) = H^{3}(S \times E) \bigoplus \oplus_{i = 1}^{4N} H^1(D_i), \qquad \text{(cf. \cite{V})},
\end{array}\]
where $D_i$ is the exceptional divisor over the fixed curve $C_i$ blowed up and is isomorphic to a $\PP^1$-bundle over $C_i$.\\
Since $\widetilde{S \times E} / \widetilde{\iota}$ is a smooth quotient of $\widetilde{S \times E}$, the cohomology groups of $X$ coincide with the invariant part of the cohomology groups of $\widetilde{S \times E}$ under $\widetilde{\iota}$. We notice that the exceptional divisors over the fixed locus of $\iota$ are clearly invariant under $\widetilde{\iota}$ in $\widetilde{S\times E}$.

\begin{proposition}[cf.\ \cite{V}]\label{prop: hodge numbers X2}
Let $S$ be a K3 surface admitting a non-symplectic involution $\iota_S$ fixing $N$ curves and let $N' = \sum_{C_i \in \Fix_{\iota_S}(S)} g(C_i)$. Let $E$ be an elliptic curve and $\iota_E$ its hyperelliptic involution. The Hodge numbers of any crepant resolution of $(S \times E) / (\iota_S \times \iota_E)$, and in particular the ones of $X$, are
\[h^{0, 0} = h^{3, 0} = 1, \quad h^{1, 0} = h^{2, 0} = 0, \quad h^{1, 1} = 1 + r + 4 N, \quad h^{2, 1} = m - 1 + 4 N'.\]
Equivalently
\[\begin{array}{ll}
h^{1, 1} = 11 + 5 N - N' = 5 + 3 r - 2 a,\\
h^{2, 1} = 11 + 5 N' - N = 65 - 3 r - 2 a,
\end{array}\]
where $a$ is defined by the following property: $(H^2(S, \Z)^{\iota_S})^{\vee}/(H^2(S, \Z)^{\iota_S}) \simeq (\Z / 2\Z)^a$.
\end{proposition}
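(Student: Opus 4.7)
The plan is to compute $H^\bullet(X)$ as the $\widetilde{\iota}$-invariant part of $H^\bullet(\widetilde{S\times E})$, using that diagram \eqref{diag: BV construction} presents $X$ as the smooth quotient $\widetilde{S\times E}/\widetilde{\iota}$. The Calabi--Yau Hodge numbers $h^{0,0}=h^{3,0}=1$ and $h^{1,0}=h^{2,0}=0$ follow immediately from Proposition~\ref{prop: the construction}, so only $h^{1,1}$ and $h^{2,1}$ require work.

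I would assemble these from three pieces. \emph{(i)} The blow-up formula in Hodge cohomology
\[H^{p,q}(\widetilde{S\times E})=H^{p,q}(S\times E)\oplus\bigoplus_{i,j}H^{p-1,q-1}(C_i\times\{P_j\})\]
over the $4N$ fixed curves $C_i\times\{P_j\}$. \emph{(ii)} The K\"unneth-plus-invariants computation already recorded in \eqref{formula: invariant h11 h21}, which gives $\dim(H^{1,1}(S\times E))^{\iota}=1+r$; for $n=2$ one moreover has $(H^{2,0}(S)\otimes H^{0,1}(E))^{\iota}=H^{2,0}(S)\otimes H^{0,1}(E)$ because both factors carry the $(-1)$-eigenvalue, so the invariant part of $H^{2,1}(S\times E)$ has dimension $1+\dim H^{1,1}(S)_{-1}=1+(20-r)=m-1$, where I use $m=22-r$. \emph{(iii)} The verification that $\widetilde{\iota}$ acts as the identity on every exceptional divisor: the local model $\diag(-1,1,-1)$ of $\iota$ forces $\iota$ to restrict to $-\id$ on the rank-$2$ normal bundle of each fixed curve, and $-\id$ descends to the identity on its projectivization, so $\widetilde{\iota}$ fixes each $D_{i,j}=\PP(N_{C_i\times\{P_j\}/S\times E})$ pointwise. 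Consequently each exceptional divisor contributes its full $H^{0,0}(C_i)$ to $H^{1,1}$ and its full $H^{1,0}(C_i)$ to $H^{2,1}$, for totals of $4N$ and $4N'$.

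Adding these yields $h^{1,1}=1+r+4N$ and $h^{2,1}=m-1+4N'$. To obtain the equivalent closed forms I would invoke Nikulin's lattice-theoretic classification of non-symplectic involutions on K3 surfaces \cite{N}, which gives
\[N=1+\tfrac{r-a}{2},\qquad N'=11-\tfrac{r+a}{2}.\]
Substituting into the expressions above, and using $m=22-r$, produces $h^{1,1}=5+3r-2a=11+5N-N'$ and $h^{2,1}=65-3r-2a=11+5N'-N$. The only delicate step is the triviality of the $\widetilde{\iota}$-action on the exceptional divisors, which I would write out carefully; everything else is bookkeeping on the Künneth and blow-up formulas together with Nikulin's relations.
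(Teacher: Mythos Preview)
Your proposal is correct and follows essentially the same route as the paper: invariant cohomology of $S\times E$ via K\"unneth (formula~\eqref{formula: invariant h11 h21}), the blow-up contributions from the $4N$ exceptional $\PP^1$-bundles, and Nikulin's relations between $(r,a)$ and $(N,N')$. Your argument is in fact more explicit than the paper's one-line proof on two points: you spell out why $\widetilde{\iota}$ acts as the identity on each exceptional divisor (the paper only says the divisors are ``clearly invariant''), and you correctly isolate the extra summand $(H^{2,0}(S)\otimes H^{0,1}(E))^{\iota}$ that appears only for $n=2$, arriving at the invariant dimension $1+(20-r)=m-1$---which is exactly what the statement $h^{2,1}=m-1+4N'$ requires.
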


The proposition follows immediately by the construction of $X$ of type $X_2$ given before, by \eqref{formula: invariant h11 h21} and by the following known relations among $(r, a)$ and $(N, N')$: $N = \frac{1}{2} (r - a + 2)$, $N' = \frac{1}{2} (22 - r - a)$.

\begin{rem}
If $N' = 0$, $\iota_X$, induced on $X$ by $\iota_S\times \id$, is a maximal automorphism, by Proposition \ref{prop: h21=m-1 maximal automorphism}.  
\end{rem}
\begin{rem} If $N'=0$ any automorphism $\sigma_S$ commuting with $\iota_S$ induces a maximal automorphism of $X$ by Remark \ref{rem: other maximal automorphisms}. In \cite{GS} it is proved that, if  $\iota_S$ is such that $N'=0$, then $S$ admits at least one symplectic involution, $\sigma_S$. So, if $N'=0$, we obtain at least one maximal automorphism (in fact an involution) preserving the period of $X$. We observe that there exists a crepant resolution of the quotient of $X$ by such an automorphism which is a Calabi--Yau 3-fold, and it is again of Borcea--Voisin type. However it is not in general a deformation of $X$.
\end{rem}

\subsection{The elliptic fibration}\label{sec: elliptic fibration 2}
We now consider the map  $\mathcal{E}_2:X\ra S/\iota_S$ (cf. Proposition \ref{prop: fibrations on Xn}) which turns out to be an elliptic fibration on $X$, a Calabi--Yau of type $X_2$, and whose analogue will be considered in the following sections. 

\begin{proposition}\label{prop: ell. fib. 2}
Let $\cE_2: X \ra S/\iota_S$ be the natural map induced on $X$ by $(S \times E) / (\iota_S \times \iota_E) \ra S/\iota_S$.

The map $\cE_2: X \ra S/\iota_S$ is an isotrivial elliptic fibration whose general fiber is isomorphic to $E$. Let us consider the quotient map $q: S \ra S/\iota_S$. Let us assume that $S$ is generic in the family of K3 surfaces with the non-symplectic involution $\iota_S$ (i.e.\ $\rho(S) = r$). The fiber $F_P$ of $\cE_2$ over $P \in S/\iota_S$ is singular if and only if $P$ is in the branch locus of $q: S \ra S/\iota_S$ and in this case $F_P$ is of type $I_0^*$. The Mordell--Weil group of this fibration is generically trivial.
\end{proposition}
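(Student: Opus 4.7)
The plan is to analyze $\cE_2$ fibre-by-fibre using the explicit model $X=\widetilde{S\times E}/\tilde\iota$ from diagram (\ref{diag: BV construction}). Over a point $P=q(s)\in S/\iota_S$ with $s\notin\Fix(\iota_S)$, the preimage of $P$ in $(S\times E)/\iota$ is the image of $\{s,\iota_S(s)\}\times E$; since $\iota$ glues the two copies via $\iota_E$, this reduces to a single copy of $E$ lying in the smooth locus of the quotient, and it lifts unchanged to $X$. This gives isotriviality and identifies the generic fibre with $E$. For a section, I would pick any $e_0\in E\setminus\{P_1,\ldots,P_4\}$ and send $q(s)\mapsto[s,e_0]$: the identification $[s,e_0]=[\iota_S(s),\iota_E(e_0)]$ makes the assignment well defined, its image avoids the singular locus of $(S\times E)/\iota$, and it lifts to a section of $\cE_2$.

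The main technical step is identifying the fibres over the branch locus. Near a fixed point $(s,P_j)\in C_i\times\{P_j\}\subset\Fix(\iota)$, take local coordinates $(x,y,z)$ on $S\times E$ with $\iota:(x,y,z)\mapsto(-x,y,-z)$ and $C_i\times\{P_j\}=\{x=z=0\}$. After blowing up $\{x=z=0\}$ with exceptional coordinate $(u:v)$ satisfying $xv=zu$, a direct check shows that the lifted involution $\tilde\iota$ acts trivially on $(u:v)$ and hence fixes the exceptional divisor pointwise, while on the proper transform of $\{s\}\times E$ (which remains isomorphic to $E$ and meets each of the four exceptional $\PP^1$-fibres over $(s,P_j)$, $j=1,\ldots,4$, transversally in one point) it acts as $\iota_E$. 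Working in the invariant coordinates $(y,z^2,u)$ the scheme-theoretic fibre of $\cE_2$ over $q(s)$ decomposes as a central rational curve of multiplicity $2$ (the quotient of the proper transform by $\iota_E$) meeting four rational curves of multiplicity $1$ (one exceptional $\PP^1$ over each $P_j$) transversally. This is exactly the Kodaira $I_0^*$ configuration; away from the branch locus no blow-up contributions appear, so these are the only reducible fibres.

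Finally, I would control the Mordell--Weil rank via the Picard rank. For $S$ generic with $\rho(S)=r$, the invariant sublattice $\mathrm{NS}(S)^{\iota_S}$ has rank $r$ and descends to $\mathrm{NS}(S/\iota_S)$, so pullback contributes $r$ algebraic classes on $X$; the section contributes $1$. Over each of the $N$ branch curves $q(C_i)\subset S/\iota_S$ the $I_0^*$ fibre has $5$ irreducible components linked by the single relation $2[D_i]+\sum_{j=1}^{4}[E_{ij}]=\cE_2^{*}[q(C_i)]$, so each branch curve contributes $4$ new vertical classes, for a total of $4N$. These $1+r+4N$ classes are independent and match $h^{1,1}(X)=1+r+4N$ from Proposition \ref{prop: hodge numbers X2}, hence $\rho(X)=h^{1,1}(X)$ and $\mathrm{NS}(X)$ is exhausted by pullbacks from the base, vertical components of the $I_0^*$ fibres, and the section. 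The Shioda--Tate decomposition of $\mathrm{NS}(X)$ for an elliptic fibration with section then forces the Mordell--Weil rank to vanish. The main obstacle is the local computation of the quotient of the blow-up that pins down the $I_0^*$ structure along with its component multiplicities; once this is in hand, the Picard-rank count and the Shioda--Tate conclusion are routine.
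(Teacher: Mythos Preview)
Your fibre analysis and the identification of the $I_0^*$ configuration follow the same idea as the paper's proof and are correct. The Mordell--Weil argument via a Picard-rank count and Shioda--Tate actually goes beyond what the paper writes (the paper only asserts the conclusion), and your count is essentially right once one notes that $S/\iota_S$ is smooth with $h^{2,0}=0$, so $\rho(S/\iota_S)=b_2(S/\iota_S)=\dim H^2(S,\Q)^{\iota_S}=r$; you should phrase it this way rather than saying ``$\mathrm{NS}(S)^{\iota_S}$ descends to $\mathrm{NS}(S/\iota_S)$''. Note also that your argument only kills the \emph{rank} of the Mordell--Weil group; the triviality of torsion is not addressed.

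There is, however, a genuine error in your construction of the section. You pick $e_0\in E\smallsetminus\{P_1,\dots,P_4\}$ and define $q(s)\mapsto [s,e_0]$, claiming the relation $[s,e_0]=[\iota_S(s),\iota_E(e_0)]$ makes this well defined. It does not: the two preimages of $q(s)$ are $s$ and $\iota_S(s)$, and your rule sends them to $[s,e_0]$ and $[\iota_S(s),e_0]=[s,\iota_E(e_0)]$ respectively, which are distinct precisely because $e_0\neq\iota_E(e_0)$. The correct choice is to take $e_0=P_j$ a fixed point of $\iota_E$; then $[s,P_j]=[\iota_S(s),P_j]$ and the assignment descends. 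The image of $S\times\{P_j\}$ does meet the singular locus of $(S\times E)/\iota$, but its strict transform in $\widetilde{S\times E}$ is $\tilde\iota$-invariant and maps to an honest section of $\cE_2$ on $X$ (this is exactly how the paper produces sections in the higher-order cases, cf.\ the proofs of Propositions \ref{prop: ell. fib. 3}, \ref{prop: ell. fib. 4}). With this fix your argument goes through.
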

\begin{proof}
Since $\iota_S$ does not fix isolated points, the quotient $S / \iota_S$ is smooth. The generic fiber of the map $\cE_2: X \ra S/\iota_S$ is an elliptic curve isomorphic to $E$ by construction. The discriminant locus of $\cE_2$ is the branch locus of $q: S \ra S/\iota_S$ and so is isomorphic to $\Fix_{\iota_S}(S)$. Any of its components is a copy of a curve $C$ fixed by $\iota_S$. By the construction of $X$ we introduce four $\PP^1$-bundles over $C$ for each curve $C$, which are in fact the blow up of $C \times P_i$, $i = 1, 2, 3, 4$. Thus, if we consider the fiber $F_P$ over a point $P$ of $q(C)$ we find the strict transform of $E$, which is a rational curve, and 4 rational curves which are the fibers over the point $P$ of the four $\PP^1$-bundles over $C \simeq q(C)$. Hence we find exactly a fiber of type $I_0^*$.
\end{proof}

\begin{rem}
The automorphism $\alpha_X$ defined in Definition \ref{defi: alphaX} is the hyperelliptic involution on the elliptic fibration $\cE_2: X \ra S/\iota_S$.
\end{rem}

\subsection{Non-symplectic automorphisms of order 2 on K3 surfaces}\label{subsect: covering of f4}
Every pair $(S, \iota_S)$, where $S$ is a K3 surface admitting a non-symplectic involution $\iota_S$, can be described in one of the following ways:
\begin{enumerate}
\item $S$ is the minimal resolution of the double cover of $\PP^2$ branched over a (possibly singular) sextic and $\iota_S$ is induced on $S$ by the cover involution;
\item $S$ is an elliptic fibration with section and $\iota_S$ is induced by the hyperelliptic involution on each smooth fiber;
\item $S$ is the minimal resolution of the double cover of $\PP^1 \times \PP^1$ branched over a (possibly singular) curve of bi-digree $(4,4)$ and $\iota_S$ is induced on $S$ by the cover involution.
\end{enumerate}

In the second case, and more precisely if $S$ is generic among the K3 surfaces admitting an elliptic fibration with section, $S/\iota_S$ is the Hirzebruch surface $\F_4$. Indeed $S$ can be embedded in $\PP(\cO_{\PP^1}(4) \oplus \cO_{\PP^1}(6) \oplus \cO_{\PP^1})$ with a Weierstrass equation $y^2 z = x^3 + A_8(s : t)xz^2 + B_{12}(s : t)z^3$, with $A_8(s : t) \in H^0(\PP^1, \cO_{\PP^1}(8))$ and $B_{12}(s : t) \in H^0(\PP^1, \cO_{\PP^1}(12))$. The quotient by the hyperelliptic involution corresponds to the projection of the K3 surface $S$ from the constant section $(x: y: z) = (0: 1: 0)$ on the surface $y = 0$.
This defines a $2: 1$ covering of $\F_4$ ($y = 0$ is a surface isomorphic to $\F_4$), branched along $(x^3 + A_8(s: t) xz^2 + B_{12}(s: t) z^3) z = 0$. So the branch divisor in $\F_4$ is $12 D_t + 4 D_z$ and is the disjoint union of the curve $D_z$, which corresponds to the section of the elliptic fibration on $S$, and of the curve $12 D_t + 3 D_z$ which is the image of the trisection passing through the points of order 2 of the elliptic fibration on $S$. The first is a rational curve, the latter has genus 10. In particular the surface $S$ admits the following equation $w^2 = (x^3 + A_8(s: t) x z^2 + B_{12}(s: t) z^3) z$, where $(s:t:x:z)$ are the coordinates of $\F_4$ introduced in Section \ref{sec: Hirzebruch} (more precisely, $x$ was $y$ with the notations of the Section \ref{sec: Hirzebruch}).

Hence a (possibly singular) model of $S$ has one of the following equations: 
\begin{enumerate}
\item $w^2 = f_6(x_0:x_1:x_2)$; \item $w^2 = (x^3 + A_8(s: t) x z^2 + B_{12}(s: t) z^3) z$;\item $w^2 = p_{4,4}((x_0: x_1),(y_0: y_1))$,\end{enumerate} and in all these cases $\iota_S$ acts trivially on all the coordinates but $w$ and changing the sign of $w$.

\subsection{Equations}\label{Weierstrass 2}
Let us now assume $E$ has the following equation $v^2 = u^3 + a u + b$ and $S$ is the double cover of $\PP^2$ branched along a sextic $V(f_6(x_0:x_1:x_2))$. An equation for a singular model of $X_2$ is
\begin{equation}\label{eq: Weierstrass order 2 P2}
Y^2 = X^3 + a f_6^2(x_0:x_1:x_2) X + b f_6^3(x_0:x_1:x_2)
\end{equation}
where the functions $Y := v w^3$, $X := u w^2$ are invariant for $\iota_S \times \iota_E: ((w, (x_0: x_1: x_2)), (v, u)) \mapsto ((-w, (x_0: x_1: x_2)), (-v, u))$. 

Similarly, if $S$ is a double cover of ${\F_4}_{(s:t:x:z)}$, a Weierstrass equation for the elliptic fibration $\cE_2 \ra \F_4$ is
\begin{eqnarray}\begin{array}{rl}\label{eq: Weierstrass order 2 F4}
Y^2 = & X^3 + a (x^3 + A_8(s: t) x z^2 + B_{12}(s: t) z^3)^2 z^2 X +\\
 & + b (x^3 + A_8(s: t) x z^2 + B_{12}(s: t) z^3)^3 z^3.
\end{array}\end{eqnarray}

If $S$ is the double cover of $\PP^1 \times \PP^1$ branched along the curve $V(p_{4,4}((x_0: x_1), (y_0: y_1)))$, an equation for a singular model of $X_2$ is
\begin{equation}\label{eq: Weierstrass order 2 P1xP1}
Y^2 = X^3 + a p_{4,4}^2((x_0: x_1), (y_0: y_1)) X + b p_{4,4}^3((x_0: x_1), (y_0: y_1)).
\end{equation}

If $V(f_6(x_0:x_1:x_2))$ and $V(p_{4,4}((x_0: x_1), (y_0: y_1)))$ are smooth, then \eqref{eq: Weierstrass order 2 P2},  \eqref{eq: Weierstrass order 2 F4}, \eqref{eq: Weierstrass order 2 P1xP1} are Weierstrass equations for the elliptic fibration $\cE_2: X_2 \ra \PP^2$, $\cE_2:X_2\ra \F_4$ $\cE_2: X_2 \ra \PP^1 \times \PP^1$ described in Proposition \ref{prop: ell. fib. 2} respectively and the results of such a proposition can be directly checked on the equation.

\section{Quotient of order 3}

\subsection{The construction of Calabi--Yau 3-folds of type $X_3$}\label{sec: construction X3}
Let $S$ be a K3 surface admitting a non-symplectic automorphism $\alpha_S$ of order 3. Such K3 surfaces are classified in \cite{AS1}, \cite{T}. The fixed locus of $\alpha_S$ consists of $n$ isolated points and of $k$ curves and the linearization of $\alpha_S$ near the fixed locus is $\diag(\zeta_3^2, \zeta_3^2)$ and $\diag(\zeta_3, 1)$ respectively.\\ 
Let $E_{\zeta_3}$ and its automorphism of order 3, $\alpha_E$, be as in Section \ref{subsec: automorphisms elliptic curve}. The automorphism $\alpha_E$ fixes 3 points of $E_{\zeta_3}$ and its local action near the fixed locus is the multiplication by $\zeta_3$.
Let $\alpha$ be the automorphism $\alpha_S \times \alpha_E^2$ of $S \times E$. The fixed locus of $\alpha$ on $S \times E$ consists of $3n$ points and $3k$ curves, and the linearization of $\alpha$ near the fixed locus is $\diag(\zeta_3^2, \zeta_3^2, \zeta_3^2)$ and $\diag(\zeta_3, 1, \zeta_3^2)$ respectively.
As in the case of the involutions one can construct a desingularization of $(S \times E) / \alpha$ by blowing up $S \times E$ in the fixed locus of $\alpha$ to obtain a variety $\widetilde{S \times E}$ such that the induced automorphism $\widetilde{\alpha}$ fixes only divisors, and then considering the quotient $\widetilde{(S \times E)} / \widetilde{\alpha}$, but in this case one has to contract some divisors on $\widetilde{(S \times E)} / \widetilde{\alpha}$ in order to obtain a Calabi--Yau 3-fold.
One finds that it suffices to blow up once the fixed points, introducing a copy of $\PP^2$ as exceptional divisor. The situation is a little bit more complicated in the case of fixed curves: one has to blow up the fixed curve $C$, introducing a divisor $D_1$, which is a $\PP^1$-bundle over $C$, and the induced automorphism fixes two disjoint sections (copies of $C$) on $D_1$. Hence, one has to blow up these 2 copies of $C$ introducing other two exceptional divisors $D_2$, $D_3$, which are again $\PP^1$-bundles over $C$.
The induced automorphism fixes only divisors and then the quotient by it is smooth. The image of the divisor $D_1$ under the quotient has to be contracted to obtain a smooth Calabi--Yau 3-fold. Globally, we introduce $3n$ exceptional divisors isomorphic to $\PP^2$ over the $3n$ fixed points, and $6k$ exceptional divisors which are $\PP^1$-bundles over $C_i$, two for each fixed curve $C_i$. A more detailed construction of the crepant resolution of $(S \times E) / \alpha$ of type $X_3$ can be found in \cite{CH} and \cite{R}.\\
Examples of Calabi--Yau 3-folds constructed in this way are studied in \cite{R} and \cite{GvG}.

\begin{proposition}\label{prop: Hodge numbers X3}
Let $S$ be a K3 surface admitting a non-symplectic automorphism $\alpha_S$ of order 3, fixing $n$ isolated points and $k$ curves. Let us denote by $C$ the curve of highest genus fixed by $\alpha_S$ and by $g(C)$ its genus. Let $E_{\zeta_3}$ be the elliptic curve with Weierstrass form $v^2 = u^3 + 1$ and $\alpha_E: (v, u) \ra (v, \zeta_3 u)$. Then any crepant resolution of $(S \times E) / (\alpha_S \times \alpha_E^2)$ is a Calabi--Yau 3-fold with Hodge numbers
\[\begin{array}{ll}
h^{0, 0} = h^{3, 0} = 1, & h^{1, 0} = h^{2, 0} = 0,\\
h^{1, 1} = r + 1 + 3 n + 6 k, & h^{2, 1} = m - 1 + 6 g(C).
\end{array}\]
Equivalently
\[h^{1, 1} = 7 + 4 r - 3 a \quad \text{and} \quad h^{2, 1} = 43 - 2 r - 3 a\] where $a$ is defined by the following property: $(H^2(S, \Z)^{\alpha_S})^{\vee} / (H^2(S, \Z)^{\alpha_S}) \simeq (\Z / 3\Z)^a$.\\
In particular the Euler characteristic is $2(h^{1, 1} - h^{2, 1}) = -72 + 12 r$.
\end{proposition}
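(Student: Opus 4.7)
The plan is to split the cohomology of any crepant resolution $X$ into two pieces: the $\alpha$-invariant part of $H^*(S\times E)$ (where $\alpha=\alpha_S\times\alpha_E^2$), and the contribution coming from the exceptional locus of the resolution described above. Since $h^{p,q}(X)$ is independent of the chosen crepant resolution by Proposition \ref{prop: existence crepant resolution}, I can compute these numbers on the explicit resolution of type $X_3$ built in the construction preceding the statement.

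For the invariant part, I apply formula \eqref{formula: invariant h11 h21} with $n=3$. Since $(\alpha_E^2)^*$ acts on $H^{1,0}(E)$ as $\zeta_3^2$, the subspace $H^{1,1}(S)_{\zeta_3}\otimes H^{1,0}(E)$ is $\alpha$-invariant and has dimension $m-1$; the piece $H^{2,0}(S)\otimes H^{0,1}(E)$ is not invariant (action is $\zeta_3^2\cdot\zeta_3=\zeta_3^2$). Hence the invariant contributions are $h^{1,1}(S\times E)^\alpha=1+r$ and $h^{2,1}(S\times E)^\alpha=m-1$. For the exceptional contribution, note that $\alpha_E^2$ fixes $3$ points of $E$, so $\alpha$ fixes $3n$ isolated points and $3k$ curves on $S\times E$ (each a copy of a curve fixed by $\alpha_S$). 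Over each isolated fixed point the resolution introduces a divisor isomorphic to $\PP^2$, contributing $1$ to $h^{1,1}$ and $0$ to $h^{2,1}$; over each fixed curve $C_0$, after the two sequential blow-ups and the subsequent contraction in the quotient, there remain exactly two exceptional divisors, each a $\PP^1$-bundle over a copy of $C_0$. By the Leray spectral sequence (or the blow-up formula) each such divisor contributes $1$ to $h^{1,1}$ (the class of the divisor) and $g(C_0)$ to $h^{2,1}$ (coming from $H^1$ of the base curve). Summing, and using that in the decomposition $\Fix_{\alpha_S}(S)=C\sqcup\coprod R_i\sqcup\{P_j\}$ only $C$ has positive genus, gives
\[
h^{1,1}(X)=1+r+3n+6k,\qquad h^{2,1}(X)=m-1+6g(C),
\]
which is the first pair of formulas.

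To obtain the equivalent expressions in terms of $(r,a)$, I use $m=(22-r)/2$ and the relations coming from the classification of non-symplectic order-$3$ automorphisms on K3 surfaces (see \cite{AST}), namely $n+2k=2+r-a$ and $4g(C)=22-r-2a$; one checks these are compatible with the Lefschetz fixed-point formula $n+2k-2g(C)=2+r-m$ applied to $\alpha_S$. Substituting then yields $h^{1,1}=7+4r-3a$ and $h^{2,1}=43-2r-3a$, and the Euler characteristic $2(h^{1,1}-h^{2,1})=12r-72$ follows immediately. The computations of Step 2 (exceptional contribution) and of the invariant cohomology are essentially mechanical; the main obstacle is Step 4, where one needs to invoke the lattice-theoretic identities from the Artebani--Sarti/Taki classification expressing the topological invariants $n$, $k$ and $g(C)$ in terms of $(r,a)$.
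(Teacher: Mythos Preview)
Your argument is correct and follows essentially the same route as the paper: compute the $\alpha$-invariant part of $H^*(S\times E)$ via \eqref{formula: invariant h11 h21}, add the contributions of the explicit exceptional divisors of the type-$X_3$ resolution ($3n$ copies of $\PP^2$ and $6k$ $\PP^1$-bundles), and then translate to $(r,a)$ using $r+2m=22$ together with the Artebani--Sarti relations (your combination $n+2k=2+r-a$ is equivalent to the paper's separate formulas $n=r/2-1$, $k=(6+r-2a)/4$). One small slip to fix: in your check that $H^{2,0}(S)\otimes H^{0,1}(E)$ is not invariant you write ``$\zeta_3^2\cdot\zeta_3=\zeta_3^2$'', but $\zeta_3^2\cdot\zeta_3=1$; the correct computation is $\zeta_3\cdot\zeta_3=\zeta_3^2$ (with $\alpha_S^*$ acting by $\zeta_3$ on $H^{2,0}(S)$ and $(\alpha_E^2)^*$ by $\zeta_3$ on $H^{0,1}(E)$), so your conclusion stands.
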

\begin{proof}
The proposition follows immediately by the construction of the Calabi--Yau of type $X_3$, by the fact that $r + 2 m = 22 = \dim(H^2(S, \C))$ and by the following relations among the pair $(r, a)$ and the invariant $(g(C), k, n)$ describing the fixed locus $\Fix_{\alpha_S}(S)$ of $\alpha_S$ on $S$: $g(C) = (22 - r - 2 a) / 4$, $k = (6 + r - 2 a) / 4$, $n = r/2 - 1$ (\cite[Section 2]{AS1}).
\end{proof}

Let $X$ be a Calabi--Yau of type $X_3$.
\begin{rem}
If $g(C) = 0$, then $h^{2, 1}(X) = m - 1$. So, by Proposition \ref{prop: h21=m-1 maximal automorphism} the automorphism $\alpha_X$ is a maximal automorphism of the family of Calabi--Yau 3-folds $X$, and this family does not admit maximal unipotent monodromy. For this reason, the families of Calabi--Yau 3-folds constructed in this way are analyzed both in \cite{R} and \cite{GvG}.
\end{rem}

\begin{rem}\label{rem: variationa Hodge structure 3}
More precisely, in \cite{GvG} it is proved that every K3 surface $S$ such that $g(C) = 0$ is birational to $(D \times E_{\zeta_3}) / (\Z/3\Z)$ where $D$ is an appropriate curve. Hence the Calabi--Yau 3-fold $X$ is birational to $(D \times E_{\zeta_3} \times E_{\zeta_3}) / (\Z / 3\Z)^2$. Using that the curve $E_{\zeta_3}$ is rigid, one proves that the variation of the Hodge structures  of $X$ depends only on the variation of the Hodge structures of the curve $D$, see \cite{GvG}. We will see in the following that this result generalizes to other Calabi--Yau's of type $X_n$. 
\end{rem}

\begin{rem}
If $g(C) = 0$ and $k\geq 3$, then $S$ admits also a symplectic automorphism $\sigma$ of order 3 which commutes with $\alpha_S$, as proved in \cite{GS}. So we are in the assumption of Remark \ref{rem: other maximal automorphisms} and thus both $\alpha_S\times \id\in \Aut(S \times E)$ and  $\sigma \times \id \in \Aut(S \times E)$ induce maximal automorphisms of the family of $X$; the first one does not preserve the period, the second one, denoted by $\sigma_X$, preserves the period. In particular there exists a crepant resolution, $Y$, of $X/\sigma_X$ which is again a Calabi--Yau 3-fold. Since $S/\sigma$ is birational to a K3 surface admitting a non-symplectic automorphism of order 3, $Y$ is birational to a Calabi--Yau 3-fold of type $X_3$.
\end{rem}

\subsection{(Almost) elliptic fibrations}
Let $\alpha_S$ be a non-symplectic automorphism of $S$ of order 3 which fixes $n$ isolated points and $k$ curves. Let $X$ be the Calabi--Yau 3-fold of type $X_3$ associated to $S$. Let us consider the map $\cE_3: X \ra S/\alpha_S$ induced on $X$ by $(S \times E)/(\alpha_S \times \alpha_E^2) \ra S/\alpha_S$. Moreover we consider the quotient map $q_S: S \ra S/\alpha_S$ and we recall that $S/\alpha_S$ has exactly $n$ singular points, the image of the $n$ isolated fixed points on $S$. We will assume $S$ to be generic in the family of K3 surfaces with a non-symplectic automorphism of order 3 (i.e.\ $\rho(S)=r$).

\begin{proposition}\label{prop: ell. fib. 3}
The map $\cE_3: X \ra S/\alpha_S$ is an almost elliptic fibration whose general fiber is isomorphic to $E_{\zeta_3}$ and it is an elliptic fibration if and only if $n = 0$. The fiber $F_P$ of $\cE_3$ over $P \in S/\alpha_S$ is of dimension 1 if and only if $P$ is a smooth point and is singular if and only if $P$ is in the branch locus of $q: S \ra S/\alpha_S$. If $P$ is a singular point of $S/\alpha_S$, the fiber $F_P$ consists of a rational curve which meets three disjoint copies of $\PP^2$, each in 1 point (see  Figure \ref{figure: non kodaira fibre - order 3}). If $P$ is a smooth point of $S/\alpha_S$ in the brach locus of $q$, then $F_P$ is a fiber of type $IV^*$. The Mordell--Weil group of this fibration is generically equal to $\Z/3\Z$.
\end{proposition}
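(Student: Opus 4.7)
My plan is to stratify the base $S/\alpha_S$ into three loci---its $n$ singular points (images of the isolated $\alpha_S$-fixed points), the smooth part of the branch locus of $q$ (images of points on the positive-dimensional components of $\Fix_{\alpha_S}(S)$), and the complement---and analyze the fiber of $\cE_3$ separately on each stratum. Over a generic point $P$, the preimage $q^{-1}(P)=\{Q_1,Q_2,Q_3\}$ is a free $\alpha_S$-orbit, so the quotient identifies $\cE_3^{-1}(P)$ with $E$ via $(Q_1,e)\mapsto e$. Each of the three $\alpha_E$-fixed points $O, (0,1), (0,-1)\in E$ gives an $\alpha$-invariant divisor $S\times\{P_j\}$ descending to a section of $\cE_3$, providing the candidate generators of the Mordell--Weil group. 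Once I show that the fibers over the singular points contain divisors, the equivalence ``$\cE_3$ is an elliptic fibration iff $n=0$'' is automatic.

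For the fibers over the smooth branch locus, I would look locally, at $P=q(Q)$ with $Q$ on a fixed curve $C$, near each of the three points $Q\times P_j\in S\times E$, where the linearization of $\alpha$ is $\diag(1,\zeta_3,\zeta_3^2)$. The local quotient is therefore a curve times an $A_2$-singularity, and the resolve--then--contract procedure of Section \ref{sec: construction X3} produces exactly the two $\PP^1$-bundles $D_2^{(j)},D_3^{(j)}$ over $C$. Adding the central $\PP^1\cong \{Q\}\times E/\langle \alpha_E^2\rangle$, the fiber over $P$ has $1+2\cdot 3 = 7$ smooth rational components, and I would argue that they meet according to the extended Dynkin diagram $\widetilde{E_6}$, i.e.\ form a Kodaira fiber of type $IV^*$.

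For the fibers over a singular point of $S/\alpha_S$: at an isolated $\alpha_S$-fixed point $Q$ the action on $T_QS$ is the scalar $\zeta_3^2\cdot I_2$, hence at $Q\times P_j$ the action of $\alpha$ on the tangent space is the scalar $\zeta_3^2\cdot I_3$. A single blow-up of each of the three points $Q\times P_j$ therefore introduces an exceptional $\PP^2$ on which the lifted automorphism acts trivially, and each descends to a $\PP^2$ inside $X$. The strict transform of $\{Q\}\times E$ meets each of these $\PP^2$'s at the direction $T_{P_j}E$ and, after the quotient by $\alpha_E^2$, becomes a $\PP^1$ meeting each of the three $\PP^2$'s transversely in a single point, which is precisely the configuration of Figure \ref{figure: non kodaira fibre - order 3}. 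The Mordell--Weil statement then follows by noting that the three sections above restrict to the full $3$-torsion on the generic fiber, and that any additional section would correspond to a non-constant $\alpha_S$-equivariant morphism $S\to E$, which does not exist for $S$ generic (i.e.\ $\rho(S)=r$), since such a morphism would factor through $\mathrm{Alb}(S)=0$.

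The most delicate step is the fiber analysis in the smooth-branch case: one must verify that, after the explicit resolution of Section \ref{sec: construction X3}, exactly the seven rational components listed above arise, that their dual graph is $\widetilde{E_6}$ (rather than, say, $\widetilde{E_7}$ or $\widetilde{E_8}$), and that the multiplicities $(1,2,3,2,1,2,1)$ matching Kodaira's $IV^*$ come out of the construction. This reduces to a toric computation in a neighbourhood of $Q\times P_j$ in $\widetilde{S\times E}$ and a careful bookkeeping through the contraction of the $D_1^{(j)}$'s.
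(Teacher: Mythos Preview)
Your proposal is correct and follows essentially the same route as the paper: stratify $S/\alpha_S$ into the generic locus, the smooth branch locus, and the $n$ singular points, and read off the fibers directly from the explicit crepant resolution of Section~\ref{sec: construction X3}. The paper's own argument is in fact terser than yours---it simply records that over an isolated fixed point one introduces three $\PP^2$'s plus the rational image of the strict transform of $\{P'\}\times E_{\zeta_3}$, and that over a point of a fixed curve the six $\PP^1$-bundle fibers (two per $R_j$) together with that same central rational curve assemble into the $\widetilde{E_6}$ configuration $IV^*$; it does not carry out the toric bookkeeping you flag as delicate, but treats it as immediate from the construction.

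One point where you go a bit further than the paper: for the Mordell--Weil statement, the paper just exhibits the three (rational) sections coming from $S\times R_j$ and leaves it at that. Your Albanese argument (any section corresponds to a morphism $S\to E$ equivariant for $\alpha_S$ and $\alpha_E^2$, hence constant since $h^{1,0}(S)=0$, hence landing in $\Fix_{\alpha_E}(E)$) actually \emph{proves} that the group is exactly $\Z/3\Z$, which the paper does not spell out. Note, incidentally, that this argument works for every $S$, not only the generic one, so the qualifier ``for $S$ generic'' is not needed here.
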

\begin{proof}
Again the proof follows directly by the construction of $X$. Let us denote by $R_i$ the 3 fixed points of $\alpha_E$ on $E$. We denote by $P' \in S$ the point such that $q(P') = P$. If $P' \in S$ is an isolated fixed point of $\alpha_S$, the points $P' \times R_i$, are isolated fixed points of $\alpha_S \times \alpha_E^2$. On each of them we introduce a $\PP^2$. Moreover, the image of the strict transform of $P' \times E_{\zeta_3}$ is a rational curve which is a component of the fiber $F_P$ over $P$.
\begin{center}
\begin{figure}[h]
\includegraphics[width = 0.2\textwidth]{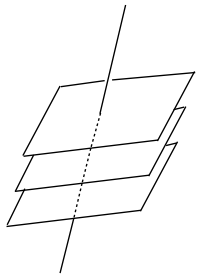}
\caption{The fiber over a singular point of $S/\alpha_S$.}
\label{figure: non kodaira fibre - order 3}
\end{figure}
\end{center}
Similarly, one constructs the reducible fiber over $P$ if $P'$ lies on a curve $C'$ fixed by $\alpha_S$.
In order to construct $X$, we introduce six $\PP^1$-bundles over $C'$, 2 on each curve $C' \times R_j$, $j = 1, 2, 3$. Considering the fiber of these $\PP^1$-bundles over $P' \times R_j$ we obtain 3 copies of $A_2$ (where $A_2$ is a configuration of 2 rational curves meeting in a point). Moreover, there is a component of the fiber $F_P$, which consists of the image of the strict transform of $P' \times E_{\zeta_3}$. It meets in one point one of the two rational curves of each configuration of type $A_2$. Hence we obtain  a configuration of $(-2)$-curves which corresponds to a fiber of Kodaira type $IV^*$.\\
The (rational) sections are the image of the strict transform of the divisors $S \times R_j$. It is immediate to show that they are sections if $S/\alpha_S$ is smooth and rational sections otherwise.
\end{proof}

\begin{rem}
The automorphism $\alpha_X$ defined in Definition \ref{defi: alphaX} is induced by the complex multiplication of order 3 on each smooth fiber of the fibration.
\end{rem}

We now give a Weierstrass equation of $\mathcal{E}_3$ in case $S/\alpha_S$ is smooth. This implies that $\alpha_S$ has no isolated fixed points, i.e. $n=0$.  By Proposition \ref{prop: ell. fib. 3}, this condition is equivalent to the condition $\mathcal{E}_3$ is an elliptic fibration. 

By \cite{AS1}, \cite{T} there exist exactly two families of K3 surfaces admitting a non-symplectic automorphism of order 3 which do not fix isolated points. Each of these families is 9 dimensional. They are characterized by the number of fixed curves of $\alpha_S$ on $S$. If $\alpha_S$ fixes exactly one curve on $S$, then $S/\alpha_S\simeq \mathbb{P}^1\times \mathbb{P}^1$ and if $\alpha_S$ fixes exactly 2 curves on $S$, then $S/\alpha_S\simeq \mathbb{F}_6$.

\subsubsection{Weierstrass equation of $\mathcal{E}_3$ if  $S/\alpha_S\simeq \PP^1\times\PP^1$}\label{Weierstrass 3-P1P1}
Let $S$ be a K3 surface admitting a non-symplectic automorphism of order 3 whose fixed locus consists of exactly one curve. In this case the fixed curve has genus 4 and an equation for $S$ is (\cite[Proposition 4.7]{AS1})
\begin{eqnarray}\label{eq: K3 in P4}
\left\{\begin{array}{lll}
F_2(x_0:x_1:x_2:x_3) & = & 0\\
F_3(x_0:x_1:x_2:x_3)+x_4^3 & = & 0
\end{array}\right.
\end{eqnarray}
where the polynomials $F_n(x_0: x_1: x_2: x_3)$ are generic polynomials of degree $n$. In this case $\alpha_S$ is induced by the projectivity $(x_0: x_1: x_2: x_3: x_4) \ra (x_0: x_1: x_2: x_3: \zeta_3 x_4)$. It is now clear that $S$ is a $3:1$ cover of a quadric, $V(F_2(x_0: x_1: x_2: x_3)) \subset \PP^3$, branched along the curve $V(F_2(x_0: x_1: x_2: x_3)) \cap V(F_3(x_0: x_1: x_2: x_3)) \subset \PP^3$. More intrinsically, $S$ is the triple cover of $\PP^1 \times \PP^1$ branched along a curve of bidigree $(3, 3)$, i.e.\ $S$ admits an equation of type $w^3 = p_{3, 3}((x_0: x_1), (y_0: y_1))$ and we can assume $\alpha_S \colon (w, ((x_0: x_1), (y_0: y_1))) \mapsto (\zeta_3 w, ((x_0: x_1), (y_0: y_1)))$.

So a Weierstrass equation for the elliptic fibration described in Proposition \ref{prop: ell. fib. 3} is
\[Y^2 = X^3 + p_{3, 3}^4((x_0: x_1), (y_0: y_1)),\]
where the functions $Y := v w^6$, $X := u w^4$ are invariant for $\alpha_S \times \alpha_E^2$. We can directly check that this elliptic fibration has three sections, two of them have order 3 and are $(X, Y)\mapsto (0, \pm p_{3, 3}^2((x_0: x_1),(y_0: y_1)))$, and that the reducible fibers are of type $IV^*$.

\subsubsection{Weierstrass equation of $\mathcal{E}_3$ if  $S/\alpha_S\simeq \mathbb{F}_6$}\label{Weierstrass 3 F6}
Let $S$ be a K3 surface admitting a non-symplectic automorphism of order 3 whose fixed locus consists exactly of 2 curves. In \cite[Proposition 4.2]{AS1} it is proved that $S$ admits an isotrivial elliptic fibration whose equation is $y^2 = x^3 + p_{12}(t)$ where $p_{12}(t)$ is a polynomial of degree 12 without multiple roots. The automorphism $\alpha_S$ in this case is $(x, y, t) \mapsto (\zeta_3 x, y, t)$. The fixed curves are the section of this elliptic fibration (which is of course rational) and the bisection $y^2 = p_{12}(t)$.
Using the homogeneous coordinates in $\PP(\cO_{\PP^1}(4) \oplus \cO_{\PP^1}(6) \oplus \cO_{\PP^1})$, the surface $S$ has equation $y^2 z = x^3 + p_{12}(s: t) z^3$ and so we see that the projection from the constant section $(x: y: z) = (1: 0: 0)$ defines a $3:1$ cover of the surface $x = 0$, which is the Hirzebruch surface $\F_6$. Using the coordinates $(s: t: y: z)$ on $\F_6$ introduced in Section \ref{sec: Hirzebruch}, the branch locus is given by the two disjoint curves $z = 0$, i.e.\ the negative curve of $\F_6$, which corresponds to the section of the elliptic fibration, and $y^2 = p_{12}(s: t) z^2$ which corresponds to the bisection.\\
Observe that $S$ admits a birational model $S'$ in $\cO_{\F_6}(4 D_t + D_z)$, whose equation is $w^3 = (y^2 - p_{12}(s: t) z^2) z$. The birational morphism
\[\begin{array}{ccc}
S & \longrightarrow & S'\\
(s, t, x, y, z) & \longmapsto & (w, (s: t: y: z)) = (x, (s: t: y: z))
\end{array}\]
is compatible with the non-symplectic automorphism $\alpha_S$, in the following sense: via this birational morphism, $\alpha_S$ induces on $S'$ the covering transformation $\alpha_{S'}: (w, (s: t: y: z)) \longmapsto (\zeta_3 w, (s: t: y: z))$ which is non-symplectic.\\
The functions $Y := v w^6$ and $X := u w^4$ are then invariant for $\alpha_{S'} \times \alpha_E^2$ and satisfy
\[Y^2 = X^3 + (y^2 z - p_{12}(s:t)z^3)^4,\]
which is a Weierstrass equation for the elliptic fibration $\mathcal{E}_3: X_3 \ra S / \alpha_S\simeq \mathbb{F}_6$ in $\PP(\cO_{\F_6}(-2 K_{\F_6}) \oplus \cO_{\F_6}(-3 K_{\F_6}) \oplus \cO_{\F_6})$.

\section{Quotient of order 4}
\subsection{The construction of Calabi--Yau 3-folds of type $X_4$}\label{sec: construction X4}

Let $E_i$ be the elliptic curve admitting an automorphism, $\alpha_E$, of order 4 such that $\alpha_E^*(\omega_E) = i\omega_E$. We recall that $\alpha_E^2$ is the hyperelliptic involution and thus fixes 4 points. Among these points, two are switched by $\alpha_E$ and two are fixed. We denote by $P_i$, $i = 1, 2$ the points such that $\alpha_E(P_i) = P_{i}$ and by $Q_j$, $j = 1, 2$ the points such that $\alpha_E^2(Q_j) = Q_j$ and $\alpha_E(Q_1) = Q_2$.

Let $S$ be a K3 surface admitting a purely non-symplectic automorphism of order 4, $\alpha_S$. Such K3 surfaces are not completely classified, but a lot of them are studied and listed in \cite{AS2}. The fixed locus of $\alpha_S$ consists of points and curves. Of course $\Fix_{\alpha_S}(S) \subset \Fix_{\alpha_S^2}(S)$. Since $\alpha_S^2$ is a non-symplectic involution of $S$ and thus fixes only curves, all the points fixed by $\alpha_S$ lie on curves fixed by $\alpha_S^2$.

We will denote by $D$ the curve of highest genus fixed by $\alpha_S^2$ and by $g(D)$ it genus. The number of curves fixed by $\alpha_S^2$ is $N$. We denote by $n_1$ the number of isolated fixed points of $\alpha_S$ which are not points of the curve $D$ and by $n_2$ the number of isolated fixed points of $\alpha_S$ which are on $D$.

The automorphism $\alpha_S$ can act in three different ways on each curve fixed by $\alpha_S^2$. We call:
\begin{enumerate}
\item curves of first type: the curves which are fixed by $\alpha_S$ (and thus of course also by $\alpha_S^2$), we denote by $A_i$ these curves and by $k$ their number;
\item curves of second type: the curves which are fixed by $\alpha_S^2$ and are invariant by $\alpha_S$, we denote by $B_i$ these curves and by $b$ their number;
\item curves of third type: the curves fixed by $\alpha_S^2$ and sent to another curve by $\alpha_S$, we denote by $C_i$ these curves and by $2a$ their number. In particular we assume $\alpha_S(C_i) = C_{i + a}$, $i = 1,\ldots, a$.
\end{enumerate}

Observe that the isolated fixed points of $\alpha_S$ lie on curves of second type and that $\alpha_S$ is an involution of each of these curves. Clearly $N = k + b + 2a$.

Let $\alpha := \alpha_S \times \alpha_E^3\in \Aut(S \times E_i)$. In order to construct a crepant resolution of the quotient $(S \times E_i) / \alpha$ we first consider the quotient $(S \times E_i) / \alpha^2$: it is of the type described in Section \ref{sec: construction X2}, and so one can construct a crepant resolution $X'$ of $(S\times E_i)/\alpha^2$ by blowing up once the fixed locus and then considering the quotient by the induced automorphism. We denote by $\alpha'$ the automorphism induced by $\alpha$ on $X'$. It has order 2 and preserves the period of $X'$ because $\alpha$ preserves the period of $S \times E_i$. The local action of $\alpha'$ near the fixed locus can be linearized to $\diag(-1, 1, -1)$.
Thus, in order to construct a crepant resolution of $X'/ \alpha'$ it suffices to blow up $X'$ in the fixed locus of $\alpha'$ and then to consider the quotient by the automorphism induced by $\alpha'$ on the blow up. We denote by $X$ the crepant resolution obtained in this way and we will say that it is of type $X_4$. We have the following diagram:
\[\xymatrix{S \times E_i \ar[d] & \widetilde{S \times E_i} \ar[l]_{\beta_1} \ar[d]_{\pi_1} & \\
S \times E_i / \alpha^2 \ar[d] & \widetilde{S \times E_i} / \tilde{\alpha^2} = X' \ar[l] \ar[d] & \widetilde{X'} \ar[l]_(.33){\beta_2} \ar[d]_{\pi_2}\\
S \times E_i / \alpha & X' / \alpha' \ar[l] & \widetilde{X'} / \tilde{\alpha'} = X\ar[l]}\]
where $\beta_1$ is the blow up of $S \times E_i$ in $\Fix_{\alpha^2}(S \times E)$, $\beta_2$ is the blow up of $X'$ in $\Fix_{\alpha'}(X')$ and all the vertical arrows are the quotient maps and are $2:1$.\\
The computation of the cohomology groups of $X$ is similar to the one described in the previous examples, but one has to be a little bit more careful with the fixed loci of $\alpha^2$ and $\alpha$.

Let us denote by $G_i$ the curves fixed by $\alpha_S^2$. The automorphism $\alpha^2$ fixes the $4N$ curves $G_i \times P_j$, $G_i \times Q_j$, $i = 1, \ldots, N$, $j = 1, 2$. The blow up $\beta_1$ introduces a $\PP^1$-bundle on each of them. All these divisors are preserved (by construction) by the quotient $\widetilde{S \times E_i} / \alpha^2$.

The automorphism $\alpha$ acts non-trivially on the set of $2N$ curves $G_i \times Q_j$, $i = 1,\ldots, N$, $j = 1, 2$.
So the fixed locus of $\alpha'$ on the $\PP^1$-bundle over these curves is empty. Moreover, since $\alpha'$ has order 2, the quotient by $\alpha'$ identifies pairs of such divisors.

Now we have to consider the action of $\alpha$ on the curves $G_i \times P_j$, $i = 1, \ldots, N$, $j = 1, 2$. It depends on the action of $\alpha_S$ on $G_i$.

If $G$ is a curve of the first type, $\alpha$ fixes $G \times P_j$. So the base of the $\PP^1$-bundle introduced on $G \times P_j$ is fixed. Since the fibers of such a bundle are rational curves and $\alpha'$ is an involution on each of these curves, there are exactly two sections of this $\PP^1$-bundle which are fixed by $\alpha'$. Each of these sections is isomorphic to $G$. So we have to blow up two curves isomorphic to $G$, and thus $\beta_2$ introduces other 2 exceptional divisors, which are $\PP^1$-bundles on $G$. By construction these two exceptional divisors are invariant for $\alpha'$ and thus they correspond to two divisors on $X$, which are $\PP^1$-bundles over $G$.

If $G$ is a curve of the second type, then for each isolated fixed point $V \in G$ of $\alpha_S$, $V\times P_j$ is a fixed point of $\alpha$. So there is a fiber (the one over $V$) of the $\PP^1$-bundle over $G$ which is fixed by $\alpha'$. Hence, for every isolated fixed point of $\alpha_S$, the blow up $\beta_2$ introduces a $\PP^1$-bundle over $\PP^1$ (the latter is a fiber of a $\PP^1$-bundle over $G$). The quotient by $\alpha'$ modifies also the exceptional divisors of $\beta_1$. Indeed, since $\alpha$ acts non-trivially on $G$, $\alpha'$ acts non-trivially on the basis of the exceptional $\PP^1$-bundle over $G$ introduced by $\beta_1$. The image of such a divisor under the quotient by $\alpha'$ is a $\PP^1$-bundle over $G/\alpha_S$.

If $G$ is a curve of the third type, then there exists another curve $G'$ such that $\alpha_S(G) = G'$. In particular $\alpha(G \times P_j) = G' \times P_j$, $j = 1, 2$. So there are no fixed points for $\alpha'$ on the $\PP^1$-bundle over $G$, and the quotient by $\alpha'$ identifies the $\PP^1$-bundle over $G$ with the one over $G'$.

Thanks to this construction we can explicitly compute the Hodge numbers of $X$. In order to consider the contribution to the Hodge numbers given by the exceptional divisors we apply \cite[Th\'eor\`em 7.3.1]{V2}, see also \cite[Proposition 7]{G}, obtaining
\begin{align}\label{formula: Hodge 4 example}
\begin{array}{rcl}
H^j(X) & = & H^j(S \times E_i), \qquad j = 0, 1,\\
H^j(X) & = & H^j(S \times E_i)^\alpha \oplus\\ &&\oplus_{i = 1}^k \left(  \oplus_{l = 1}^3 \left(H^{j - 2}(\coprod{D_{A_i \times P_1}} \coprod D_{A_i \times P_2})\right)\oplus H^{j - 2}(D_{A_i \times Q_1}) \right) \oplus\\
 & & \oplus_{i = 1}^b \left( H^{j - 2}(\coprod D_{(B_i \times P_1)/\alpha} \coprod D_{(B_i \times P_2)/\alpha} \coprod D_{B_i \times Q_1}) \right) \oplus \\&&  \oplus_{i = 1}^{a} (H^{j - 2}(\coprod D_{C_i \times P_1} \coprod D_{C_i \times P_2} \coprod D_{C_i \times Q_1} \coprod D_{C_i \times Q_2})),\oplus\\
&&\oplus_{i = 1}^{n_1 + n_2} \left( \oplus_{l = i}^2 H^{j - 2}(D_{\PP^1}) \right)  \qquad j = 2, 3\\
\end{array}
\end{align}
where $D_C$ is a $\PP^1$-bundle over the curve $C$ and we recall that $C \times Q_1 \simeq C \times Q_2$ and $C_i \simeq C_{i + a}$.

\begin{proposition}\label{prop: Borcea Voisin 4}
Let $S$ be a K3 surface with a purely non-symplectic automorphism $\alpha_S$, such that $\alpha_S^*(\omega_S) = i\omega_S$. Let us denote by $m := \dim(H^2(S, \Z)_i) = \dim(H^2(S, \Z)_{-i})$ and by $r := \dim(H^2(S, \Z)^{\alpha_S})$. \\
Let $X$ be a crepant resolution of $(S\times E_{i})/(\alpha_S\times\alpha_E^3).$

If $\alpha_S^2$ fixes two elliptic curves, then the Hodge numbers of $X$ are $h^{j, 0} = 1$, if $j = 0, 3$,  $h^{j, 0} = 0$, if $j = 1, 2$,  $h^{1, 1} = 19 + r = 25$, $h^{2, 1} = m - 1 + 8 = 13$.

If the fixed locus of $\alpha_S^2$ does not consist of two elliptic curves, let us denote by $D$ the curve of highest genus fixed by $\alpha_S^2$. There are the following two possibilities:
1) $D$ is of the first type; 2) $D$ is of the second type.

The Hodge numbers of $X$ are:
\[\begin{array}{lll}
h^{j, 0} = 1 \text{ if } j = 0, 3, & h^{j, 0} = 0  \text{ if } j = 1, 2, & \\
h^{1, 1} = 1 + r + 7k + 3b+2 (n_1+n_2) + 4a, & h^{2, 1} = m - 1 + 7g(D), & \text{in case 1)}\\
h^{1, 1} = 1 + r + 7k + 3b+2 (n_1+n_2)+ 4a, & h^{2, 1} = m + 2g(D) - n_2 / 2, & \text{in case 2)}.
\end{array}\]
Equivalently,
\[\begin{array}{lll}
h^{1, 1} = 22 + 17 k + 5 a - 10 g(D), &
h^{2, 1} = 4 - k - a + 8 g(D), & \text{in case 1)}\\
h^{1, 1} = \frac{102 - 7 n_2 - 2 g(D)}{4} + 17 k + 5 a, &
h^{2, 1} = \frac{18 - n_2 + 10 g(D)}{4} - k - a, & \text{in case 2).}
\end{array}\]
In particular $\chi(X) = 36 + 36 k - 36 g(D) + 12 a$ in case 1), $\chi(X) = 42 - 3 n_2 - 6 g(D) + 36 k + 12 a$ in case 2) and $\chi(X) = 24$ if $\alpha_S^2$ fixes 2 elliptic curves.
\end{proposition}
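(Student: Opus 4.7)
My plan is to derive the Hodge numbers from the cohomological decomposition (\ref{formula: Hodge 4 example}) obtained by running through the construction of Section \ref{sec: construction X4} step by step. Since $X$ is Calabi--Yau by Proposition \ref{prop: the construction}, the values $h^{0,0}(X)=h^{3,0}(X)=1$ and $h^{1,0}(X)=h^{2,0}(X)=0$ are immediate; only $h^{1,1}(X)$ and $h^{2,1}(X)$ need work. The contribution from the invariant part of $H^\ast(S\times E_i)$ is handled by (\ref{formula: invariant h11 h21}) with $n=4$: the invariant of $H^{1,1}(S\times E_i)$ has dimension $1+r$ and that of $H^{2,1}(S\times E_i)$ is $\dim H^{1,1}(S)_i=m-1$. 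The remaining contributions are governed by the exceptional divisors in (\ref{formula: Hodge 4 example}); each such divisor is a $\PP^1$-bundle over a curve, contributing $+1$ to $h^{1,1}$ and $+g$ to $h^{2,1}$ (with $g$ the genus of the base curve) via the $(1,1)$ Hodge shift in the blow--up formula, or a $\PP^1$-bundle over $\PP^1$ contributing only $+1$ to $h^{1,1}$.

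The combinatorial core of the argument is an enumeration of these bundles according to the type of component of $\Fix_{\alpha_S^2}(S)$ above which they lie, tracked through the two blow--ups and the two quotients. For a curve $A_i$ of first type, $\alpha$ fixes $A_i\times P_1$ and $A_i\times P_2$ pointwise and swaps $A_i\times Q_1\lra A_i\times Q_2$; the $\beta_1$-bundle above each $A_i\times P_j$ carries a fibrewise involution $\alpha'$ which fixes two sections isomorphic to $A_i$, blown--up by $\beta_2$ to produce two further bundles over $A_i$, and the two bundles over $A_i\times Q_j$ are identified by the second quotient, giving $3+3+1=7$ bundles per curve of first type. For a curve $B_i$ of second type, $\alpha'$ acts non--trivially on the base of the $\beta_1$-bundle over $B_i\times P_j$, and the quotient is a ruled surface over $B_i/\alpha_S$; the $n_2$ isolated fixed points of $\alpha_S|_{B_i}$ correspond to fibres of this bundle that are fixed by $\alpha'$ and hence blown--up by $\beta_2$, producing the Hirzebruch surfaces counted as $2$ per isolated fixed point. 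Together with the identified bundle over $B_i\times Q_1\simeq B_i\times Q_2$, this yields $3$ bundles attached to each $B_i$. For a pair $\{C_i,C_{i+a}\}$ of third type, $\alpha$ identifies all four candidate divisors in pairs, giving $4$ bundles per pair. Summing all contributions produces $h^{1,1}(X)=1+r+7k+3b+4a+2(n_1+n_2)$.

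For $h^{2,1}$, by the classification \cite{AS2} only the curve $D$ of highest genus is non--rational in $\Fix_{\alpha_S^2}(S)$; in case 1) ($D$ of first type) the sum collapses to $7g(D)$, and in case 2) ($D$ of second type) it becomes $g(D)+2g(D/\alpha_S)=1+2g(D)-n_2/2$ by Riemann--Hurwitz for the double cover $D\to D/\alpha_S$ ramified at $n_2$ points. The equivalent expressions in terms of $r,a,k,g(D),n_2$ follow from the arithmetic relations among the invariants of $(S,\alpha_S)$ recorded in \cite[Section 1]{AS2}, and the Euler characteristic is a direct substitution into $\chi(X)=2(h^{1,1}-h^{2,1})$. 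The degenerate case where $\alpha_S^2$ fixes two elliptic curves requires separate treatment, since then the curve of highest genus is no longer unique and the invariants $r,m$ are forced by Nikulin's classification to the specific values consistent with $r+m_{-1}+2m=22$; a direct enumeration of the possible distributions of the two elliptic curves among first, second and third type (with the admissible $n_2$) then yields $h^{1,1}=25$ and $h^{2,1}=13$.

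The main obstacle I anticipate is the bookkeeping at curves of second type: one must avoid double--counting the contributions of the isolated fixed points, which already appear in the term $2(n_1+n_2)$ as Hirzebruch surfaces coming out of $\beta_2$, while the surviving ruled surface over $B_i/\alpha_S$ independently contributes to the $3b$ term; similarly one must apply Riemann--Hurwitz for $D\to D/\alpha_S$ with exactly the $n_2$ isolated fixed points lying on $D$ as ramification data. Once this enumeration is carried out consistently for all three types of curves and for the two positions of $D$, the two systems of formulas claimed in the proposition follow by direct addition.
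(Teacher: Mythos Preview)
Your proposal is correct and follows essentially the same approach as the paper: both derive $h^{1,1}$ and $h^{2,1}$ from the decomposition (\ref{formula: Hodge 4 example}), counting the $\PP^1$-bundles introduced over curves of each type (yielding $7k+3b+4a+2(n_1+n_2)$), applying Riemann--Hurwitz to compute $g(D/\alpha_S)=(2g(D)+2-n_2)/4$ in case 2), and invoking the relations from \cite{AS2} between $(r,m)$ and $(k,a,b,g(D),n_1,n_2)$ to obtain the equivalent expressions. Your write-up is more explicit about the divisor count than the paper's terse proof, but the content is the same.
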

\begin{proof}
The proof of the first part of the statement follows directly by the construction of $X$ and by \eqref{formula: Hodge 4 example}. One has to observe that the genus of $D/\alpha$ is $g(D)$ if $D$ is of the first type. If $D$ is of the second type, the genus of $D/\alpha$ can be computed by Riemann--Hurwitz formula since $D$ is a double cover of $D/\alpha$ branched in $n_2$ points. Thus $g(D/\alpha) = (2 g(D) + 2 - n_2)/4$.

In order to obtain the Hodge numbers as function of $(n_1, n_2, g(D), k, a)$ one needs the following relations among these numbers and $(r, m)$:
\[r = \frac{1}{2} (12 + k + 2a + b - g(D) + 4 h), \quad m = \frac{1}{2} (12 - k - 2a - b + g(D))\]
where $h = \sum_{C \subseteq \Fix \alpha_S(S)} (1 - g(C))$, which are computed by \cite[Thm. 1, p. 4]{AS2} and \cite[Prop. 1, p. 4]{AS2}. Moreover we observe that if $D$ is of the first type $h = (k - 1) + (1 - g(D)) = k - g(D)$, $n_2 = 0$, $n_1 = 2h + 4$, $b = n_1 / 2$; if $D$ is of the second type, $h = k$, $n_1 + n_2 = 2h + 4$, $b = n_1 / 2 + 1$.  
\end{proof}

\begin{rem}\label{rem: no mum 4}
If $g(D)=0$, then $h^{2, 1}(X) = m - 1$. So, by Proposition \ref{prop: h21=m-1 maximal automorphism},  if $D$ is rational $\alpha_X$ is a maximal automorphism of the family of Calabi--Yau 3-folds of $X$, and this family does not admit maximal unipotent monodromy. Some of these families were already constructed in \cite{G} (the ones corresponding to lines 12 - 13 - 15 - 16 - 18 of Table \ref{table: Hodge numbers case 1}).

We observe moreover that any other automorphism on $S$ commuting with $\alpha_S$ induces a maximal automorphism of $X$. In particular if $X$ admits an isotrivial elliptic fibration with fibers isomorphic to $E_i$ (this surely happens for some $S$, for example in many cases constructed in \cite{G}), then $S$ admits a symplectic involution (the translation by the 2-torsion section of this elliptic fibration) and thus $X$ admits a maximal automorphism of order 2 preserving the period.
\end{rem} 

\begin{rem}\label{rem: isotrivial fib. S4, no mum} In \cite{G} it is proved that some K3 surfaces admitting a non-symplectic automorphism of order 4 such that $D$ is a rational curve (with the notation of Proposition \ref{prop: Borcea Voisin 4}), are birational to the quotient $(C \times E_i) / (\Z/4\Z)$ where $C$ is a certain curve of positive genus. As a consequence one obtains that $X$ is birational to $(C \times E_i \times E_i)/ (\Z/4\Z)$ and thus the variation of the Hodge structures of $X$ depends only on the variation of the Hodge structures of the curve $C$, see also Remark \ref{rem: variationa Hodge structure 3}. Here we can extend this result to each K3 surface admitting a non-symplectic automorphism $\alpha_S$ of order 4 such that $D$ is a rational curve and $\alpha_S$ is the cover automorphism of a 4:1 map $S\ra\mathbb{P}^2$. Indeed, if $S$ is a 4:1 cover of $\mathbb{P}^2$ the branch locus consists of a  plane quartic curve $Q$, which is fixed by $\alpha_S$. Since $D$ is rational, $Q$ is quite 
singular (it could also be reducible) and in particular it 
admits at least either one node or a cusp, say in the point $P\in Q$. The pencil of lines through $P$ in $\mathbb{P}^2$ induces a pencil of curves on $S$. The general member of this pencil is a $4:1$ cover of $\mathbb{P}^1$ branched in 2 points of order 4 and two points of order 2, thus it is isomorphic to $E_i$. So, the pencil of lines through $P$ in $\mathbb{P}^2$ induces an isotrivial elliptic fibration on $S$ whose general fiber is isomorphic to $E_i$. Hence there exists a curve $C$ such that $S$ is birational to $(C \times E_i) / (\Z/4\Z)$, $X$ to $(C \times E_i \times E_i)/ (\Z/4\Z)$ and the variation of the Hodge structures of $X$ depends only on the one of the curve $C$.\end{rem}
\begin{rem}
Let $S$ be a K3 surface, which is a 4:1 cover of $\mathbb{P}^2$ and let $\alpha_S$ be the cover automorphism. The branch locus of $S\xrightarrow{4:1}\mathbb{P}^2$ is a possibly singular quartic curve $Q$. Let us denote by $x$ the number of nodes of $Q$, by $y$ the number of cusps of $Q$ and by $z$ the number of the components of $Q$. Then the properties of the fixed locus of $\alpha_S$ are the following: $D$ is a curve of first type and is the component of $Q$ of highest genus, $k=z$, $N=k+x+y$, $n=2(x+y)$, $a=y$. 
Hence the Calabi--Yau 3-folds of type $X_4$ constructed by $S$ are the ones described in lines 2,3,4,5,7,8,11,\ldots,19 of Table \ref{table: Hodge numbers case 1}. In particular the Calabi--Yau 3-folds which are of the type described in Remark \ref{rem: isotrivial fib. S4, no mum} are the ones of lines $12,\ldots,19$ of Table \ref{table: Hodge numbers case 1}.\end{rem}

\begin{rem} There exists at least three different Calabi--Yau 3-folds of Borcea--Voisin type which are rigid and admit an automorphism acting as $\zeta_4=i$ on $\omega_X$ (see lines 18, 19 of Table \ref{table: Hodge numbers case 1} and line 5 of Table \ref{table: only rational curves}). Since these Calabi--Yau 3-folds are rigid, $H^3(X,\Q)\simeq \Q^2$ and the Hodge structure is given by the decomposition in eigenspaces for the action of $\alpha_X$ (i.e. for the action of $\cdot i$ on $\Q^2\simeq \Q(i)$). These Calabi--Yau 3-folds are associated to the same K3 surface $S$ (the unique K3 surface admitting a non--symplectic involution, $\alpha_S^2$, whose fixed locus consists of 10 rational curves). On this $S$ there are at least 3 automorphisms  $\alpha_S\in \Aut(S)$ of order 4 such that $\alpha_S^2$ fixes 10 rational curves and for each of them the Borcea--Voisin construction gives a different Calabi--Yau 3-fold.
For example, assume that $S$ admits both a $4:1$ cover of $\mathbb{P}^2$ branched along 4 lines and a  $4:1$ cover of $\mathbb{P}^2$ branched along a quartic with three cusps. The cover automorphisms in these two cases are different and have different fixed loci (these are associated to the Calabi--Yau 3-folds in lines 18 and 19 in Table \ref{table: Hodge numbers case 1}).\end{rem}

\subsection{(Almost) elliptic fibrations}

Let $X$ be a Calabi--Yau 3-fold of type $X_4$. Let us consider the map $\cE_4: X \ra S/\alpha_S$ induced on $X$ by $(S \times E_i)/(\alpha_S \times \alpha_E^3) \ra S/\alpha_S$. We consider the quotient map $q_S: S \ra S/\alpha_S$ and we recall that $S/\alpha_S$ has exactly $n_1 + n_2$ singular points, the images of the $n_1 + n_2$ isolated fixed points on $S$.
We will assume $S$ to be generic in the family of K3 surfaces with a non-symplectic automorphism of order 4 (i.e.\ $\rho(S) = 22 - 2m$).

\begin{proposition}\label{prop: ell. fib. 4}
The map $\cE_4: X \ra S/\alpha_S$ is an almost elliptic fibration whose general fiber is isomorphic to $E_i$ and it is an elliptic fibration if and only if $n_1 + n_2 = 0$. The fiber $F_R$ of $\cE_4$ over $R \in S/\alpha_S$ is of dimension 1 if and only if $R$ is a smooth point, and is singular if and only if $R$ is in the branch locus of $q: S \ra S/\alpha_S$. In particular:
\begin{itemize}
\item If $q^{-1}(R)$ consists of two points, the fiber $F_R$ is of type $I_0^*$;
\item If $q^{-1}(R)$ consists of one point and $R \in S/\alpha_S$ is smooth, the fiber $F_R$ is of type $III^*$;
\item If $R$ is a singular point of $S/\alpha_S$, the fiber $F_R$ consists of 2 rational curves meeting in a point and two disjoint divisors which are $\PP^1$-bundles over $\PP^1$, and which intersect both the same rational curve in one point (see Figure \ref{figure: non kodaira fibre - order 4}).
\end{itemize}
The Mordell--Weil group of this fibration is generically equal to $\Z / 2\Z$.
\end{proposition}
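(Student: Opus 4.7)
The plan is to follow the template of Propositions \ref{prop: ell. fib. 2} and \ref{prop: ell. fib. 3}: stratify $S/\alpha_S$ according to the stabilizer in $\langle\alpha_S\rangle$ of a preimage in $S$, and read off the corresponding fiber of $\cE_4$ from the explicit construction of $X$ carried out in Section \ref{sec: construction X4}. Over a point $R\in S/\alpha_S$ whose preimage has trivial stabilizer, the map is unramified and the fiber is $E_i$, so the generic fiber is as claimed and the question of whether $\cE_4$ is an elliptic fibration reduces to the analysis of fibers over the branch locus of $q$ and over the $n_1+n_2$ singular points of $S/\alpha_S$; the latter occur precisely when $\alpha_S$ has isolated fixed points, which explains the dichotomy in the statement.

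For a smooth $R\in S/\alpha_S$ with $q^{-1}(R)$ consisting of two points (lying on a curve of second or third type), the stabilizer of either preimage is $\langle\alpha_S^2\rangle$, so the local picture is exactly the $X_2$-construction for $\alpha^2=\alpha_S^2\times\alpha_E^2$: the four $\PP^1$-bundles introduced by $\beta_1$ over $G\times P_j,G\times Q_j$ together with the strict transform of $E_i$ assemble into an $I_0^*$-fiber, as in Proposition \ref{prop: ell. fib. 2}. If instead $q^{-1}(R)$ is a single smooth point $P\in A_i$ (first type), the full group $\langle\alpha_S\rangle$ acts on $\{P\}\times E_i$ via $\alpha_E^3$, and over each point of $A_i$ the construction places the seven exceptional rational curves that contribute the $7k$ term to $h^{1,1}$ in Proposition \ref{prop: Borcea Voisin 4}: three divisors over each of $A_i\times P_1$ and $A_i\times P_2$, plus one over the identified $A_i\times Q_1\simeq A_i\times Q_2$. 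A routine check of the incidences with the image of the strict transform of $E_i/\langle\alpha_E\rangle$ should identify the eight components as the $\tilde E_7$ configuration of a $III^*$-fiber.

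The main obstacle is the analysis of the fiber over a singular point of $S/\alpha_S$, i.e.\ the image of an isolated fixed point $P$ of $\alpha_S$ (which necessarily lies on a curve of second type). There, $(P,P_j)$ are isolated fixed points of $\alpha$ and contribute genuine two-dimensional exceptional divisors to $X$; I expect the local model to show each of these becomes a $\PP^1$-bundle over $\PP^1$ after the combined effect of $\beta_1,\pi_1,\beta_2,\pi_2$. On the other hand $(P,Q_j)$ are fixed by $\alpha^2$ but swapped in pairs by $\alpha'$, so the $\PP^1$-bundles introduced by $\beta_1$ here descend to a single rational curve in the fiber. Tracking through the double-blow-up/double-quotient diagram should exhibit the fiber as the configuration of Figure \ref{figure: non kodaira fibre - order 4}: two rational curves meeting in a point, each crossed transversally by one of the two $\PP^1$-bundles over $\PP^1$ from the resolution of the isolated fixed points of $\alpha$.

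Finally, for the Mordell--Weil group, the two $\alpha_E^3$-fixed points $P_1,P_2\in E_i$ give $\alpha$-invariant divisors $S\times P_j$ whose images yield two (rational) sections of $\cE_4$ differing by translation by a $2$-torsion point, so $\mathrm{MW}(\cE_4)\supseteq \Z/2\Z$. Under the genericity assumption $\rho(S)=22-2m$, a Shioda--Tate count based on the singular fibers computed above, combined with the constraint that any new section would produce an extra algebraic class on $S$, forces equality.
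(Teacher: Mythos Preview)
Your approach is essentially the paper's: stratify $S/\alpha_S$ by stabilizer type and read off the fibers from the two-step construction of $X$. The paper organizes this slightly more explicitly by first passing to the intermediate elliptic fibration $\cE':X'\to S/\alpha_S^2$, invoking Proposition \ref{prop: ell. fib. 2} to get an $I_0^*$-fiber of $\cE'$ over each branch point of $q'$, and then analyzing the action of the residual involution $\alpha'$ on that $I_0^*$. Your treatment of the $I_0^*$ and $III^*$ cases amounts to the same computation.

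There is a concrete inaccuracy in your description of the fiber over a singular point. You write that the two rational curves are ``each crossed transversally by one of the two $\PP^1$-bundles''. In fact both surface components meet the \emph{same} rational curve, namely the image $\overline{E'}$ of the strict transform of the central component of the $I_0^*$. In $X'$ the residual involution $\alpha'$ fixes pointwise the two multiplicity-one components coming from $B\times P_1$ and $B\times P_2$ (these are the curves that $\beta_2$ blows up, producing the two $\PP^1$-bundles over $\PP^1$), swaps the two components coming from $B\times Q_1$ and $B\times Q_2$, and acts as an involution on the central component $E'$. After $\beta_2$ and the quotient $\pi_2$, both exceptional divisors are attached to $\overline{E'}$, and the single rational curve obtained from the identified $Q$-pair is also attached to $\overline{E'}$; the second rational curve does not meet either divisor. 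So the central node of the configuration is $\overline{E'}$, not a chain with a divisor at each end.

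For the Mordell--Weil statement, the paper only exhibits the $2$-torsion section coming from $S\times P_2$ and does not carry out a Shioda--Tate count; your proposed argument there goes beyond what the paper records.
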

\begin{proof}
With the same notations as in the previous section, we denote by $\cE': X' \ra S/\alpha_S^2$ the map induced on $X'$ by $(S \times E_i) / (\alpha_S^2 \times \alpha_E^2) \ra S/\alpha_S^2$ and by $F'_R$ the fiber of $\cE'$ over $R \in S/\alpha_S^2$. Moreover we denote by  $q': S \ra S/\alpha_S^2$ the quotient map.

Let $R \in S/\alpha_S$ be such that $q^{-1}(R) = \{R_1, R_2\} \subset S$. This implies $q'(R_i) = R_i$, $i = 1, 2$ and so the fibers $F'_{R_i}$ are of type $I_0^*$, by Proposition \ref{prop: ell. fib. 2}. The automorphism induced by $\alpha_S$ on $S/\alpha_S^2$ switches $R_1$ and $R_2$. So $\alpha'$ switches $F_{R_1}$ and $F_{R_2}$ and thus $F_R$ is a fiber of type $I_0^*$.

Let $R \in S/\alpha_S$ be a smooth point such that $q^{-1}(R) = R_1 \in S$. This implies that there exists a curve $A \subset S$ such that $R_1 \in A$ and $A \subset \Fix_{\alpha_S}(S)$. The fiber $F'_{R}$ of $\cE'$ is of type $I_0^*$ (by Proposition \ref{prop: ell. fib. 2}). Two of the components with multiplicity one (those which are fibers of the $\PP^1$-bundles introduced by $\beta_1$ over $A \times Q_1$ and $A \times Q_2$ respectively) are switched by $\alpha'$. The automorphism $\alpha'$ acts as an involution on the other 2 components of multiplicity 1 and fixes two points on each of them. So the fiber $F_R$ of $\cE_4$ is a fiber of type $III^*$ and 7 of its 8 components are fibers of the seven $\PP^1$-bundles over $A$ introduced by the blow ups $\beta_1$ and $\beta_2$. The other component is the image of the strict transform of $E_i$.

Let $R \in S/\alpha_S$ be a singular point. Then $q^{-1}(R) = R_1$, $R_1$ is an isolated fixed point for $\alpha_S$ and it lies on a curve $B$ fixed by $\alpha_S^2$. The fiber $F'_R$ of $\cE'$ is of type $I_0^*$ (by Proposition \ref{prop: ell. fib. 2}). Two of the components of multiplicity 1 of $I_0^*$ are switched by $\alpha'$ (the ones which are fibers of the $\PP^1$-bundles introduced by $\beta_1$ over $B \times Q_1$ and $B \times Q_2$ respectively). The other two are fibers of the $\PP^1$-bundles introduced by $\beta_1$ over $B \times P_1$ and $B \times P_2$. These curves are fixed by $\alpha'$ and so $\beta_2$ is a blow up of each of these curves. Hence the fiber $F_R$ consists of a rational curve which is the image of the strict transform of $E_i$, of a rational curve which is the image of the two rational curves switched by $\alpha'$ and of 2 divisors, which are exceptional divisors of $\beta_2$ and are $\PP^1$-bundles over $\PP^1$.
In particular it contains 2 curves and 2 divisors (see Figure \ref{figure: non kodaira fibre - order 4}).

\begin{center}
\begin{figure}[h]
\includegraphics[width = 0.4\textwidth]{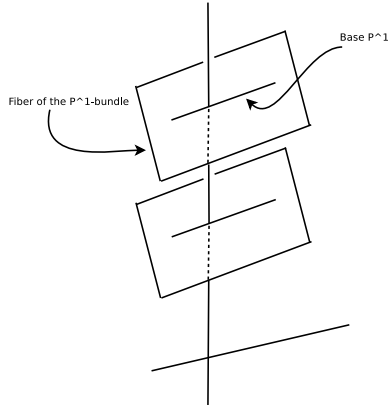}
\caption{The fiber over a singular point of $S/\alpha_S$.}
\label{figure: non kodaira fibre - order 4}
\end{figure}
\end{center}

The (rational) sections are the images of $S \times P_1$ and $S \times P_2$ under the rational map $\pi_2 \circ \beta_2^{-1} \circ \pi_1 \circ \beta_1^{-1}$. One of them can be chosen as zero section. The other is a section of order 2, indeed if $P_1$ is the zero of the elliptic curve $E_i$, then $P_2$ has order 2 on $E_i$.
\end{proof}

We now give the Weierstrass equations of $\mathcal{E}_4$ in case the base $S/\alpha_S$ is smooth. This is equivalent to require that $\alpha_S$ has no isolated fixed points, i.e. $n_1+n_2=0$. We observe that in this case $\mathcal{E}_4$ is an elliptic fibration, by Proposition \ref{prop: ell. fib. 4}. 

By \cite[Proposition 1]{AS2}, $n_1+n_2=0$ implies $h = -2$ where $h = \sum_{C \in \Fix_{\alpha_S}(S)} (1 - g(C))$ ($h$ is the same as $\alpha$ in \cite{AS2}). We already noticed that if $D$ is of the second type $h = k$, i.e.\ $h$ is the number of curves fixed by $\alpha_S$. Since $k \geq 0$, if $n_1 + n_2 = 0$, then $D$ is of first type. In this case $-2 = h = k + 1 - g(D)$, which implies $g(D) \geq 3$. By \cite[Theorem 4.1]{AS2}, $g(D) \leq 3$. So $n_1 + n_2 = 0$ implies that $D$ is of the first type and $g(D) = 3$. There exist exactly two families of K3 surfaces admitting a non-symplectic automorphism of order 4 fixing a curve of genus 3, and the difference among these cases is that in one case $D$ is hyperelliptic while in the other it is not. These families are respectively 5 and 6 dimensional. We now consider both these situation.

\subsubsection{Weierstrass equation of $\mathcal{E}_4$ if  $S/\alpha_S\simeq \PP^2$}\label{Weierstrass 4 P2-proj}

If $D$ is not a hyperelliptic curve, the K3 surface is a $4:1$ cover of $\PP^2$ branched along a smooth quartic. Let us denote by $Q$ the smooth quartic in $\PP^2$, which is the zero locus of $f_4(x_0: x_1: x_2: x_3)$. The equation of $S$ is $t^4 = f_4(x_1: x_2: x_3)$. The automorphism $\alpha_S: S \ra S$ is the cover automorphism $(t: x_1: x_2: x_3) \mapsto (it: x_1: x_2: x_3)$.

The variables  $Y := v t^9$, $X := u t^6$, defined on $S \times E_i$, are invariant for $\alpha_S \times \alpha_E^3$ and satisfy
\begin{equation}\label{formula: elliptic on X}
Y^2 = X^3 + X f_4^3(x_1: x_2: x_3).
\end{equation}

Moreover, the map $((v, u), (t, x_0: x_1: x_2)) \ra ((x, y), (x_1: x_2: x_3))$ is $4:1$ on $S \times E_i$ and thus \eqref{formula: elliptic on X} is the Weierstrass equation of the elliptic fibration $\mathcal{E}_4: X \ra S / \alpha_S$ described in Proposition \ref{prop: ell. fib. 4}. Since the discriminant is $\Delta = f_4^3(x_1:x_2:x_3)$ all the singular fibers are of type $III^*$. This in fact agrees with Proposition \ref{prop: ell. fib. 4}.

\subsubsection{Weierstrass equation of $\mathcal{E}_4$ if  $S/\alpha_S\simeq \mathbb{F}_4$}\label{Weierstrass 4 F4}
We now assume that $\alpha_S$ fixes a hyperelliptic curve of genus 3 on $S$. We first briefly recall the construction of $S$ as $4:1$ cover of $\mathbb{F}_4$ given in \cite[Section 2]{A}. Then we deduce an equation for the elliptic fibration $\mathcal{E}_4$ in this case.

Let $C$ be a hyperelliptic curve of genus $3$. Then $C$ is defined by an equation of the form $y^2 = f_8(s: t)$ with $f_8 \in H^0(\PP^1, \cO_{\PP^1}(8))$ in the line bundle $\cO_{\PP^1}(4)$, where $(s: t)$ are coordinates on $\PP^1$ while $y$ is a coordinate on the fiber.
We embed $\cO_{\PP^1}(4)$ in $\PP^5$ as the cone on the rational normal curve in $\PP^4 = \{ x_5 = 0 \}$ with vertex $(0: 0: 0: 0: 0: 1)$. Blowing up this cone in the singular point gives us the Hirzebruch surface $\Hirz_4$. The toric resolution of $\mathcal{O}_{\mathbb{P}^1}(4)$ (with coordinates $(s:t:y)$) is $\mathbb{F}_4$ with coordinates $(s:t:y:z)$ as in Section \ref{sec: Hirzebruch}. With this coordinates, an equation for $C$ is
\[C: y^2 = f_8(s: t) z^2.\]
Let $\widetilde{\Hirz_4}$ be the double cover of $\Hirz_4$ branched along $C$, and $S$ the double cover of $\widetilde{\Hirz_4}$ branched along the strict transforms of $C$ and $D_z$. Then $S$ is a K3 surface, which is a $4:1$ covering of $\Hirz_4$. Moreover the covering automorphism fixes the strict transform of $C$ and swithces the strict transforms of $D_z$.\\
Introducing a new coordinate $\eta$, an equation for $\widetilde{\F_4}$ is $\eta^2 = y^2 - f_8(s: t) z^2$ and so an equation for $S$ is
\[\left\{ \begin{array}{l}
\xi^2 = \eta z\\
\eta^2 = y^2 - f_8(s: t) z^2
\end{array} \right.\]
The covering automorphism is $(\eta, \xi, (s: t: y: z)) \longmapsto (-\eta, i \xi, (s: t: y: z))$.\\
The surface $S$ admits a model $S'$ in $\cO_{\F_4}(2 D_t + D_z)$, with equation $w^4 = (y^2 - f_8(s:t) z^2) z^2$, and the birational morphism
\[\begin{array}{ccc}
S & \ra & S'\\
(\eta, \xi, (s: t: y: z)) & \longmapsto & (w, (s: t: y: z)) = (\xi, (s: t: y: z))
\end{array}\]
is compatible with $\alpha_S$ and the covering transformation $\alpha_{S'}: (w, (s: t: y: z)) \longmapsto (iw, (s: t: y: z))$.\\
On the product $S' \times E$ the functions $Y := \frac{u w^9}{z^3}$ and $X := \frac{u w^6}{z^2}$ are invariant under the action of the morphism $\alpha$. Moreover they satisfy the relation
\[Y^2 = X^3 + (y^2 - f_8(s: t) z^2)^3 z^2 X,\]
which is an equation for the Weierstrass model of $\mathcal{E}_4: X \ra S / \alpha_S$ in $\PP(\cO_{\F_4}(-2 K_{\F_4}) \oplus \cO_{\F_4}(-3 K_{\F_4}) \oplus \cO_{\F_4})$.

\section{Quotient of order 6}\label{sec: X6}
\subsection{The construction of Calabi--Yau 3-folds of type $X_6$}\label{subsec: X6}
Let us consider the elliptic curve $E_{\zeta_3}$ introduced in Section \ref{sect: cy and automorphism}. We already observed that its Weierstrass equation is $v^2 w = u^3 + w^3$ and that it has an automorphism $\beta_E: (u: v: w) \ra (\zeta_3^2 u: -v: w)$, whose square is $\beta_E^2 = \alpha_E$.\\
We consider a K3 surface $S$ which admits a purely non-symplectic automorphism $\beta_S$ of order 6. As shown in Proposition 
\ref{prop: the construction} there exists a desingularization of $(S \times E_{\zeta_3})/(\beta_S \times \beta_E^5)$ which is a Calabi--Yau. Observe that, since $S$ admits both a non-symplectic automorphism of order 3 ($\beta_S^2$) and one of order 2 ($\beta_S^3$), one can construct from $S$ special members of the families of Calabi--Yau 3-folds of type $X_2$ and of type $X_3$ described in Sections \ref{sec: construction X2} and \ref{sec: construction X3} respectively. In \cite{R3}, the author analyzes the consequence of this fact on Calabi--Yau 3-folds constructed from some special K3 surfaces $S$.

Let us denote by $\beta := \beta_S \times \beta_E^5$.
In order to construct a crepant resolution of $(S \times E_{\zeta_3})/\beta$ one can use the previous results in two different ways:
\begin{enumerate}
\item one considers the automorphism $\beta^2 \in \Aut(S \times E_{\zeta_3})$ of order 3 and constructs a crepant resolution, $\widetilde{S \times E_{\zeta_3}/\beta^2}$, of $(S \times E_{\zeta_3})/\beta^2$ as in Section \ref{sec: construction X3}. Then, one considers the automorphism $\beta'$ of order 2  induced on  $\widetilde{S \times E_{\zeta_3}/\beta^2}$ by $\beta$ and constructs a crepant resolution of $\left( \widetilde{S \times E_{\zeta_3}/\beta^2} \right) / \beta'$ as in Section \ref{sec: construction X2};
\item\label{item: x6 construction} one considers the automorphism $\beta^3 \in \Aut(S \times E_{\zeta_3})$ of order 2 and constructs a crepant resolution, $\widetilde{S \times E_{\zeta_3}/\beta^3}$, of $(S \times E_{\zeta_3})/\beta^3$ as in Section \ref{sec: construction X2}. Then, one considers the automorphism $\beta'$ of order 3  induced on  $\widetilde{S \times E_{\zeta_3}/\beta^3}$ by $\beta$ on and constructs a crepant resolution of $\left( \widetilde{S \times E_{\zeta_3}/\beta^3} \right) / \beta'$ as in Section \ref{sec: construction X3}.
\end{enumerate}

These two constructions give of course birational Calabi--Yau 3-folds (both are birational to the singular 3-fold $(S \times E_{\zeta_3})/\beta$), which are not necessarily isomorphic.

We will denote by $X$ the Calabi--Yau threefold constructed as in (\ref{item: x6 construction}) and call it of type $X_6$. So we have the following diagram:

\[\xymatrix{
S \times E_{\zeta_3} \ar[d] & \widetilde{S \times E_{\zeta_3}} \ar[l]_{b_1} \ar[d]_{\pi_1} & &\\
S \times E_{\zeta_3} / \beta^3 \ar[d] & \widetilde{S \times E_{\zeta_3}} / \tilde{\beta^3} = X' \ar[l] \ar[d] & \widetilde{X'} \ar[l]_(.3){b_2} \ar[d]_{\pi_2}&\\
S \times E_{\zeta_3} / \beta & X' / \beta' \ar[l]_{a_1} & \widetilde{X'} / \tilde{\beta'} \ar[l]_{a_2} \ar[r]^{\gamma}&X_6 }\]
where $b_1$ is the blow up of $S \times E_{\zeta_3}$ in $\Fix_{\beta^3}(S \times E_{\zeta_3})$, $b_2$ is the blow up of $X'$ in $\Fix_{\beta'}(X')$, the maps $\pi_1$ and $\pi_2$ are $2:1$ and $3:1$ respectively and $\gamma$ is a contraction. Indeed, in order to construct a crepant resolution of the singularities of type $A_2$, we introduce three exceptional divisors, we make the quotient and then we contract the image of one of these divisors (cf. Section \ref{sec: construction X3}). The maps $a_1$ and $a_2$ are defined by the commutativity of the diagram.

In order to compute the cohomology of $X$ we consider the previous diagram and study the divisors introduced by the blow ups $b_1$ and $b_2$, as we did in Section \ref{sec: construction X4} for Calabi--Yau of type $X_4$.

We first briefly describe the properties of a purely non-symplectic automorphism $\beta_S$ of order 6 on a K3 surface $S$ and in particular of its fixed9 locus $\Fix_{\beta_S}(S)$. The study of such automorphisms is the topic of \cite{D}. The fixed locus of $\beta_S$ consists of $l$ curves and $p_{(2, 5)}+p_{(3, 4)}$ points (\cite{D}). The points $p_{(i, j)}$ are the ones such that the linearization of $\beta_S$ near the fixed point is represented by the diagonal matrix $\diag(\zeta_6^i,\zeta_6^j)$.
All the points fixed by $\beta_S$ lie on curves fixed by $\beta_S^3$. Indeed $\beta_S^3$ is a purely non-symplectic automorphism of order 2, and thus its fixed locus consists of $N$ disjoint curves and no isolated points.
The automorphism $\beta_S^2$ is a purely non-symplectic automorphism of order 3. It fixes $k$ curves and $n$ isolated points. In particular there are $p_{(2, 5)}$ points which are fixed both by $\beta_S$ and by $\beta_S^2$, $2n' = n - p_{(2, 5)}$ points which are fixed by $\beta_S^2$ and switched by $\beta_S$. 
There are $p_{(3, 4)}$ points which are isolated fixed points for $\beta_S$ and lie on curves fixed by $\beta_S^2$: in particular these points are the intersection points between curves fixed by $\beta_S^2$ and curves fixed by $\beta_S^3$.
There are $2a$ curves which are fixed by $\beta_S^2$ and switched by $\beta_S$ and $3b$ curves which are fixed by $\beta_S^3$ and permuted by $\beta_S$.

We will be interested in the curves of positive genus with a non-trivial stabilizer (since they are the unique curves which contribute to $h^{2, 1}(X)$). If there exists a curve with positive genus fixed by $\beta_S$, it is unique and we denote it by $D$; if there exists a curve with positive genus fixed by $\beta_S^2$ it is unique and it is denoted by $B$, if there exist curves of positive genus fixed by $\beta_S^3$, then these are at most two and we denote them by $F_i$, $i = 1, 2$. Clearly these curves can coincide, if there exists a curve $D$ of positive genus fixed by $\beta_S$, then $D = B = F_1$. Moreover we observe that if $B$ or $F_i$, $i = 1, 2$ are not fixed by $\beta_S$, they are invariant by $\beta_S$.

Let us now consider the automorphism $\beta_E$ of $E$. It fixes one point which is the zero of the elliptic curve and will be denoted by $O$. The automorphism $\alpha_E = \beta_E^2$ fixes three points: $O$, $R_1$ and $R_2$ and we observe that $\beta_E(R_1) = R_2$. The automorphism $\beta_E^3$ fixes four points: $O$, $P_1$, $P_2$, $P_3$ and we observe that the points $P_i$ form an orbit under $\beta_E$.

Let us construct explicitly the Calabi--Yau 3-fold $X$ of type $X_6$. We will denote by $\delta: (S \times E_{\zeta_3})/(\beta_S \times \beta_E^5) \dashrightarrow X$ the rational map which is the composition $\gamma \circ (a_2)^{-1} \circ (a_1)^{-1}$ and by $\pi$ the quotient map $S \times E_{\zeta_3} \ra (S \times E_{\zeta_3})/(\beta_S\times \beta_E^5)$.
First we consider a curve $C$ which is fixed by $\beta_S$. Then $b_1$ is a blow up of $C \times O$ which introduces a $\PP^1$-bundle over $C$. Two sections of this bundle are fixed by $\beta'$ (which has order 3), so $\gamma \circ b_2$ introduces 2 other $\PP^1$-bundles for each of this sections. Hence $\delta$ introduces 5 exceptional divisors on $X$ over $\pi(C \times O)$, and all of them are $\PP^1$-bundles over $C$.\\
We recall that the points $R_1$ and $R_2$ are identified by the quotient. Hence $\delta$ introduces 2 exceptional divisors on $X$ over $\pi(C \times R_1)$ ($= \pi(C \times R_2)$), both isomorphic to a $\PP^1$-bundle over $C$.
Moreover, $\delta$ introduces one exceptional divisor in $X$ over $\pi(C \times P_1)$, isomorphic to a $\PP^1$-bundle over $C$. So for every curve $C$ fixed by $\beta_S$ we introduce 8 divisors, isomorphic to $\PP^1$-bundles over $C$.

Let us now assume that $C$ is a curve fixed by $\beta_S^3$ and invariant for $\beta_S$. Let us denote by ${p_{(i, j)}}_C$ the number of fixed points of $\beta_S$ on $C$ of type $p_{(i, j)}$. The blow up $b_1$ introduces one divisor over $C \times O$. For every point ${p_{(2, 5)}}_C$ we find a fiber of the exceptional divisor in the fixed locus of $\beta'$ and so $\gamma \circ b_2$ introduces 2 divisors for every point of type $p_{(2, 5)}$ in $C$.
For every point of type $p_{(3, 4)}$, the fiber of the $\PP^1$-bundle over such a point is invariant for $\beta_S$, but not fixed. It contains two fixed points: one of them is contained in a curve $C'$ fixed by $\beta_S^2$, the other is an isolated fixed point. The point which lie on $C'$ is blown up together with the curve $C'$ (and so this blow up will be considered in the following). The second point has to be blown up, and this introduces a $\PP^2$. So $\delta$ introduces $1 + 2{p_{(2, 5)}}_C + {p_{(3, 4)}}_C$ divisors over $\pi(C \times O)$: one of them is a $\PP^1$-bundle over $C/\beta_S$, $2{p_{2,5}}_C$ are $\PP^1$-bundles over $\PP^1$ and the last ones are ${p_{(3, 4)}}_C$ copies of $\PP^2$.\\
Since $\beta_S^3$ identifies $C \times R_1$ with $C \times R_2$, $b_1$ does not introduce exceptional divisors on the image of such curves. The automorphism $\beta'$ fixes only points on $\pi_1(C \times R_1)$ ($= \pi_1(C \times R_2)$) and these are the images of the points of type $p_{(2, 5)}$. So $\delta$ introduces ${p_{(2, 5)}}_C$ copies of $\PP^2$ over $\pi(C \times R_1)$ ($= \pi(C \times R_2)$).
Moreover, $\delta$ introduces a $\PP^1$-bundle over $C/\beta_S$ over $\pi(C \times P_1)$ ($= \pi(C \times P_i)$, $i = 2, 3$).

Let us now assume that $C$ is a curve fixed by $\beta_S^2$ and invariant for $\beta_S$. All the points on $C$ fixed by $\beta_S$ are of type $p_{(3, 4)}$. The rational map $\delta$ introduces: 2 exceptional divisors over $\pi(C \times O)$, both isomorphic to a $\PP^1$-bundle over $C/\beta_S$; 2 exceptional divisors over $\pi(C \times R_1)$ ($= \pi(C \times O)$), both isomorphic to a $\PP^1$-bundle over $C/\beta_S$ and no divisor over $\pi(C \times P_1)$ ($= \pi(C \times P_i)$, $i = 2, 3$).

Let $(C_1,C_2,C_3)$ be a triple of curves, permuted by $\beta_S$ and such that every curve $C_i$ is fixed by $\beta_S^3$. Then $\delta$ introduces one exceptional divisor over $\pi(C_1 \times O)$ ($= \pi(C_i \times O)$, $i = 2, 3$); one exceptional divisor over $\pi(C_1 \times P_j)$ ($= \pi(C_i \times P_j)$, $i = 2, 3$) for every $j = 1, 2, 3$ and no divisor over $\pi(C_1 \times R_1)$ ($= \pi(C_i \times R_j)$, $i = 1, 2, 3$, $j = 1, 2$). All the divisors introduced are $\PP^1$-bundles over $C_1$.

Let $(C_1,C_2)$ be a pair of curves, permuted by $\beta_S$ and such that every curve $C_i$ is fixed by $\beta_S^2$. Then $\delta$ introduces 2 exceptional divisors over $\pi(C_1 \times O)$ ($= \pi(C_2 \times O)$); 2 exceptional divisors over $\pi(C_1 \times R_j)$ ($= \pi(C_2 \times R_h)$, $\{j,h\}=\{1,2\}$) for every $j = 1, 2$ and no divisor over $\pi(C_1 \times P_1)$ ($= \pi(C_i \times P_j)$, $i = 1, 2$, $j = 1, 2, 3$). All the divisors introduced are $\PP^1$-bundles over $C_1$.

Let $(Q_1,Q_2)$ be a pair of points of $S$ switched by $\beta_S$ and fixed by $\beta_S^2$, then $\delta$ introduces a copy of $\PP^2$ over $\pi(Q_1 \times O)$($= \pi(Q_2 \times O)$), over $\pi(Q_1 \times R_1)$ ($= \pi(Q_2 \times R_1)$) and over$\pi(Q_1 \times R_2)$ ($= \pi(Q_2 \times R_2)$).

\begin{proposition}\label{prop: hodge numbers 6}
With the notations above, the Hodge numbers of $X$ are the following:
\[\begin{array}{ll}
h^{j, 0} = & 1 \text{ if } j = 0, 3, h^{j, 0} = 0 \text{ if } j = 1, 2,\\
h^{1, 1} = & r + 1 + 2l + 2N - 2b + 4k - 2a + 3n' + 3p_{(2,5)} + p_{(3,4)}\\
h^{2, 1} = & m - 1 + 2g(D) + 2(g(B/\beta_S) + g(B)) + \\
           & +g(F_1/\beta_S) + g(F_1) + g(F_2/\beta_S) + g(F_2)
\end{array}\]
where $m := \dim(H^2(S, \Z)_{-\zeta_3}) = \dim(H^2(S, \Z)_{-\zeta^2_3})$ and $r := \dim(H^2(S, \Z)^{\beta_S})$.
\end{proposition}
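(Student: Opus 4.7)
The strategy is to follow exactly the template already used in the proofs of Propositions~\ref{prop: hodge numbers X2}, \ref{prop: Hodge numbers X3} and \ref{prop: Borcea Voisin 4}: decompose
\[ H^{p,q}(X) \;\cong\; H^{p,q}(S\times E_{\zeta_3})^\beta \;\oplus\; \bigoplus_{E} H^{p-1,q-1}(Z_E), \]
where $\beta = \beta_S\times\beta_E^5$ and $E$ runs over the exceptional divisors produced by the birational modification $\delta = \gamma\circ a_2^{-1}\circ a_1^{-1}$, with $Z_E$ the smooth center of the corresponding blow-up. This decomposition is obtained by applying \cite[Th\'eor\`em 7.3.1]{V2} in two steps, one for $b_1$ followed by the quotient $\pi_1$, and one for $b_2$ followed by the quotient $\pi_2$ and the contraction $\gamma$, exactly as was done to derive \eqref{formula: Hodge 4 example} in the $X_4$ case. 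The Calabi--Yau property of $X$ immediately yields $h^{0,0}=h^{3,0}=1$ and $h^{1,0}=h^{2,0}=0$, while formula \eqref{formula: invariant h11 h21} (valid for arbitrary $n$) contributes $1+r$ to the invariant part of $h^{1,1}$ and $m-1$ to the invariant part of $h^{2,1}$, which are the leading terms of the two announced formulas.

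To the remaining sum one attaches the standard blow-up contributions: an isolated fixed point produces a $\PP^2$, adding $(1,0)$ to $(h^{1,1},h^{2,1})$, while a $\PP^1$-bundle over a smooth curve $C$ adds $(1,g(C))$. The relevant base curves have already been catalogued in the enumeration preceding the statement. Specifically, I would sum: $8$ divisors (all $\PP^1$-bundles over $C$) for each curve $C$ fixed by $\beta_S$; $4$ divisors (all $\PP^1$-bundles over $C/\beta_S$) for each curve fixed by $\beta_S^2$ that is only invariant (not fixed) by $\beta_S$; $6$ divisors per pair of $\beta_S^2$-fixed curves swapped by $\beta_S$; for each $\beta_S^3$-fixed curve $C$ on which $\beta_S$ has order~$3$, the $1+2p_{(2,5),C}+p_{(3,4),C}$ divisors over $\pi(C\times O)$ together with the divisors over $\pi(C\times R_j)$ and $\pi(C\times P_j)$ listed in the construction; $4$ divisors per triple of $\beta_S^3$-fixed curves permuted by $\beta_S$; and finally the $\PP^2$'s coming from the isolated fixed points of types $p_{(2,5)}$, $p_{(3,4)}$ and from the pairs $(Q_1,Q_2)$ counted by $n'$. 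Collecting these counts and reading off the coefficients of $l,N,b,k,a,n',p_{(2,5)},p_{(3,4)}$ reproduces the announced formula for $h^{1,1}(X)$.

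For $h^{2,1}(X)$, only the $\PP^1$-bundles whose base has positive genus contribute, and by the classification recalled in the paragraph before the statement these are exactly the curves $D$, $B$, $F_1$, $F_2$ and the quotients $B/\beta_S$, $F_i/\beta_S$. Counting the multiplicity with which each of them occurs as a base of an exceptional divisor gives the coefficients $2$ in front of $g(D)$, $2$ each in front of $g(B)$ and $g(B/\beta_S)$, and $1$ each in front of $g(F_i)$ and $g(F_i/\beta_S)$, as stated. The main obstacle is the careful bookkeeping in the overlapping cases: whenever $D$ exists one has $D=B=F_1$, so a single curve contributes in its role as $\beta_S$-fixed, $\beta_S^2$-fixed \emph{and} $\beta_S^3$-fixed curve simultaneously, and one must check that the sum of the three contributions evaluates consistently (using $g(D/\beta_S)=g(D)$ to recover the expected eight copies of $g(D)$). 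Once this bookkeeping is verified, the two formulas in the statement follow immediately.
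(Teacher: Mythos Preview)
Your approach is correct and coincides with the paper's own proof, which simply refers back to the explicit construction of $X$ and tallies the exceptional divisors; the paper writes out the sum $8l + 2(N-l-3b) + 3p_{(2,5)} + p_{(3,4)} + 4(k-l-2a) + 4b + 6a + 3n'$ and simplifies it to the stated $h^{1,1}$.

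Two small bookkeeping points are worth flagging. First, your description ``$4$ divisors (all $\PP^1$-bundles over $C/\beta_S$)'' for a curve $C$ fixed by $\beta_S^2$ and merely $\beta_S$-invariant reproduces a slip in the construction paragraph: the two divisors over $\pi(C\times R_1)$ are in fact $\PP^1$-bundles over $C$, not over $C/\beta_S$ (the image $\pi_1(C\times R_1)\simeq C$ since $\beta^3$ identifies $(c,R_1)$ with $(\beta_S(c),R_2)$ bijectively, and $\beta'$ fixes this copy of $C$ pointwise). This is exactly why the proposition carries $2g(B)+2g(B/\beta_S)$ rather than $4g(B/\beta_S)$; you state the correct coefficients in your $h^{2,1}$ paragraph, so the inconsistency is only in the earlier sentence. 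Second, be careful not to double-count the $\PP^2$'s from points of type $p_{(2,5)}$ and $p_{(3,4)}$ in your final item: those contributions are already included in your item for the $\beta_S^3$-fixed curves on which $\beta_S$ acts with order~$3$, so listing them again as a separate summand would inflate $h^{1,1}$.
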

\begin{proof}
The proof is based on the construction of $X$ described above. For example, we noticed that for every curve in $S$ fixed by $\beta_S$, the rational map $\delta$ introduces 8 divisors, and each of them is a $\PP^1$-bundle on such a curve. Since there are $l$ curves on $S$ fixed by $\beta_S$, we have $8l$ divisors introduced by $\delta$. Thus the number of divisors introduced by $\delta$ is $8l + 2 (N - l - 3b) + 3p_{(2, 5)} + p_{(3, 4)} + 4 (k - l - 2a) + 4b + 6a + 3n' = 2l + 2N - 2b + 4k - 2a + 3n' + 3p_{(2, 5)} + p_{(3, 4)}$, so $h^{1,1}$ is as in the statement.
\end{proof}
\begin{rem}\label{rem: maximal autom 6}
We observe that $h^{2, 1}(X) = m - 1$ if and only if both $\beta_S^2$ and $\beta_S^3$ do not fix curves of positive genus. This happens for at least one family of K3 surfaces, the one constructed in Example \ref{example no mum X6}. If $h^{2,1}(X)=m-1$, then the family of $X$ is maximal Borcea--Voisin and does not admit maximal unipotent monodromy (cf. Proposition \ref{prop: h21=m-1 maximal automorphism}).
\end{rem}

\subsection{(Almost) elliptic fibrations}\label{subsect: almost elliptic x6}
We consider the map $\mathcal{E}_6: X \ra S/\beta_S$ and we denote by $F_Z$ the fiber of such a fibration over a point $Z \in S/\beta_S$. We denote by $\mathcal{E}': X' \ra S/\beta_S^3$ and we denote by $F'_Z$ the fiber of such a fibration over a point $Z \in S/\beta_S^3$. Moreover we consider the quotient maps $q_S: S \ra S/\beta_S$ and $q': S \ra S/\beta_S^3$. We observe that $S/\beta_S$ has $p_{(2, 5)} + n'$ singularities of type $A_2$.\\
We will assume $S$ to be generic in the family of K3 surfaces with a non-symplectic automorphism of order 6 (i.e.\ $\rho(S) = 22 - 2m$).

\begin{proposition}\label{prop: ell. fib. 6}
The map $\mathcal{E}_6: X \ra S/\beta_S$ is an almost elliptic fibration whose general fiber is isomorphic to $E_{\zeta_3}$. The fiber $F_Z$ of $\mathcal{E}_6$ over $Z \in S/\beta_S$ has dimension 1 if and only if $Z$ is a smooth point which is not the image of a point of type $p_{(3, 4)}$ and is singular if and only if $Z$ is in the branch locus of $q: S \ra S/\beta_S$. 
In particular:
\begin{itemize}
\item If $q^{-1}(Z)$ consists of 3 points, $F_Z$ is of type $I_0^*$;
\item If $q^{-1}(Z)$ consists of 2 points and $Z \in S/\beta_S$ is smooth, $F_Z$ is of type $IV^*$;
\item If $q^{-1}(Z)$ consists of 2 points and $Z \in S/\beta_S$ is singular, $F_Z$ contains three disjoint copies of $\PP^2$ and a rational curve meeting each $\PP^2$ in a point (see Figure \ref{figure: non kodaira fibre - order 3});
\item If $q^{-1}(Z)$ consists of 1 point and $Z \in S/\beta_S$ is a smooth point which is not the image of a point of type $p_{(3, 4)}$, $F_Z$ is of type $II^*$;
\item If $q^{-1}(Z)$ consists of 1 point and $Z \in S/\beta_S$ is the image of a point of type $p_{(3, 4)}$, $F_Z$ contains seven rationals curves and a divisor $D \simeq \PP^2$ (see Figure \ref{figure: non kodaira fibre - order 6a});
\item If $q^{-1}(Z)$ consists of 1 point and $Z \in S/\beta_S$ is singular, $F_Z$ contains two rational curves and three divisors, one of them is isomorphic to $\mathbb{P}^2$, the other are $\PP^1$-bundles over $\PP^1$ (see Figure \ref{figure: non kodaira fibre - order 6b}).
\end{itemize}

The Mordell--Weil group of this fibration is generically trivial.
\end{proposition}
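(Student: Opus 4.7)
The strategy is to adapt the proofs of Propositions \ref{prop: ell. fib. 2}, \ref{prop: ell. fib. 3}, \ref{prop: ell. fib. 4} to the two-step construction of $X$ described in Section \ref{subsec: X6}: $X$ is the crepant resolution of $X'/\beta'$, the elliptic fibration $\mathcal{E}'\colon X'\to S/\beta_S^3$ from Proposition \ref{prop: ell. fib. 2} is equivariant for the order $3$ automorphism $\beta'$ (induced by $\beta=\beta_S\times\beta_E^5$), and $\beta'$ descends to the order $3$ quotient $q''\colon S/\beta_S^3\to S/\beta_S$. The generic fiber of $\mathcal{E}_6$ is $E_{\zeta_3}$ by construction, and the zero section comes from $S\times\{O\}$. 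Triviality of the generic Mordell--Weil group follows from the fact that $O$ is the unique fixed point of $\beta_E^5$ on $E_{\zeta_3}$: the other natural candidates $S\times\{P_i\}$ $(i=1,2,3)$ are cyclically permuted by $\beta$ and descend to a single irreducible degree $3$ multi-section rather than to three sections.

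The classification of singular fibers is carried out by a case analysis on the orbit size $|q^{-1}(Z)|=6/|\mathrm{Stab}_{\langle\beta_S\rangle}(P)|$ for $P\in q^{-1}(Z)$, refined by the type of fixed locus of the stabilizer at $P$. In each case one identifies the fiber of $\mathcal{E}'$ over $q'(P)$ via Proposition \ref{prop: ell. fib. 2}, describes the action of $\beta'$ on it, and applies the order $3$ crepant resolution procedure of Section \ref{sec: construction X3}. For $|q^{-1}(Z)|=3$ the three $I_0^*$ fibers of $\mathcal{E}'$ over the three points of $q'(q^{-1}(Z))\subset S/\beta_S^3$ are cyclically permuted by $\beta'$, and the quotient collapses them to a single $I_0^*$. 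For $|q^{-1}(Z)|=2$ with $Z$ smooth (so $P$ lies on a curve fixed by $\beta_S^2$), a direct computation using $\beta^3=\beta_S^3\times\iota_E$ shows that $\beta'$ acts on the smooth $E_{\zeta_3}$-fiber of $\mathcal{E}'$ as $\alpha_E$, whose three fixed points produce an $IV^*$ configuration upon resolution, exactly as in Proposition \ref{prop: ell. fib. 3}. For $|q^{-1}(Z)|=1$ with $Z$ smooth and not the image of a $p_{(3,4)}$ point, $P$ lies on a curve fixed by $\beta_S$; the $I_0^*$ fiber of $\mathcal{E}'$ is $\beta'$-invariant, $\beta'$ fixes the central component and the component meeting the zero section while cyclically permuting the other three multiplicity-one components, and the two resolutions extend this to a fiber of type $II^*$.

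The main obstacle is the detailed bookkeeping in the three cases where $F_Z$ contains divisors: $|q^{-1}(Z)|=2$ with $Z$ singular (the $n'$ pairs of isolated fixed points of $\beta_S^2$ swapped by $\beta_S$), $Z$ the image of a $p_{(3,4)}$ point, and $Z$ the image of a $p_{(2,5)}$ point. Here one must trace through the blow-ups $b_1,b_2$, the quotients $\pi_1,\pi_2$, and the contraction $\gamma$ of the diagram in Section \ref{subsec: X6}, and identify which of the exceptional divisors enumerated there (over each type of fixed locus of $\beta_S$) are vertical for $\mathcal{E}_6$ over $Z$. A $p_{(3,4)}$ point contributes a $\PP^2$ from the isolated-fixed-point resolution, together with seven rational curves coming from restrictions to $F_Z$ of exceptional divisors over the two fixed curves of $\beta_S^2$ and $\beta_S^3$ through $P$ and from the strict transform of $E_{\zeta_3}$, giving Figure \ref{figure: non kodaira fibre - order 6a}; a $p_{(2,5)}$ point similarly yields a $\PP^2$ and two $\PP^1$-bundles over $\PP^1$ inherited from the order $3$ resolution of the $A_2$ singularity of $S/\beta_S$, meeting two rational curves (Figure \ref{figure: non kodaira fibre - order 6b}); and the $|q^{-1}(Z)|=2$ singular case produces three disjoint $\PP^2$'s meeting the strict transform of $E_{\zeta_3}/\alpha_E\simeq\PP^1$ as in Figure \ref{figure: non kodaira fibre - order 3}. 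The incidence combinatorics in each case is checked by intersecting the relevant exceptional divisors with the proper transform of $\{P\}\times E_{\zeta_3}$ and with one another.
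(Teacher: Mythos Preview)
Your proposal is correct and follows essentially the same approach as the paper: the two-step factorisation through $X'\to S/\beta_S^3$, the case analysis by orbit size $|q^{-1}(Z)|$, and the reduction of each case to the fibre descriptions in Propositions \ref{prop: ell. fib. 2} and \ref{prop: ell. fib. 3} are exactly what the paper does. One small wording issue: in the $II^*$ case you say that $\beta'$ ``fixes'' the central component $E$, but in fact $\beta'$ only leaves $E$ invariant and has a single isolated fixed point on it (the image of $Z_1\times R_1$), which is where the extra $A_2$-tree of rational curves is attached; with that clarified your count of components matches the paper's.
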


\begin{center}
\begin{figure}[h]
\subfigure[\label{figure: non kodaira fibre - order 6a}]%
{\includegraphics[width = 0.3\textwidth]{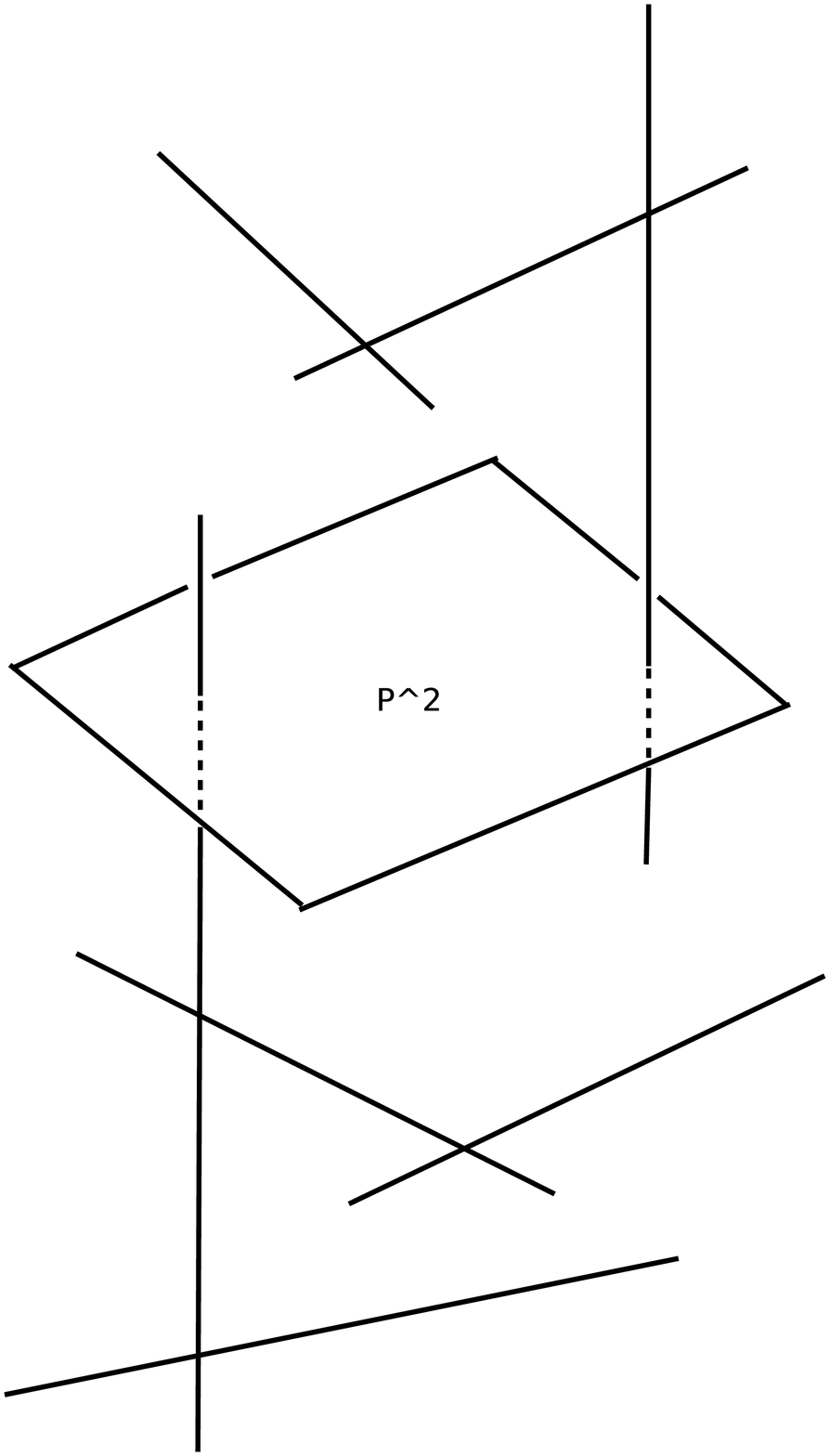}} \qquad
\subfigure[\label{figure: non kodaira fibre - order 6b}]%
{\includegraphics[width = 0.45\textwidth]{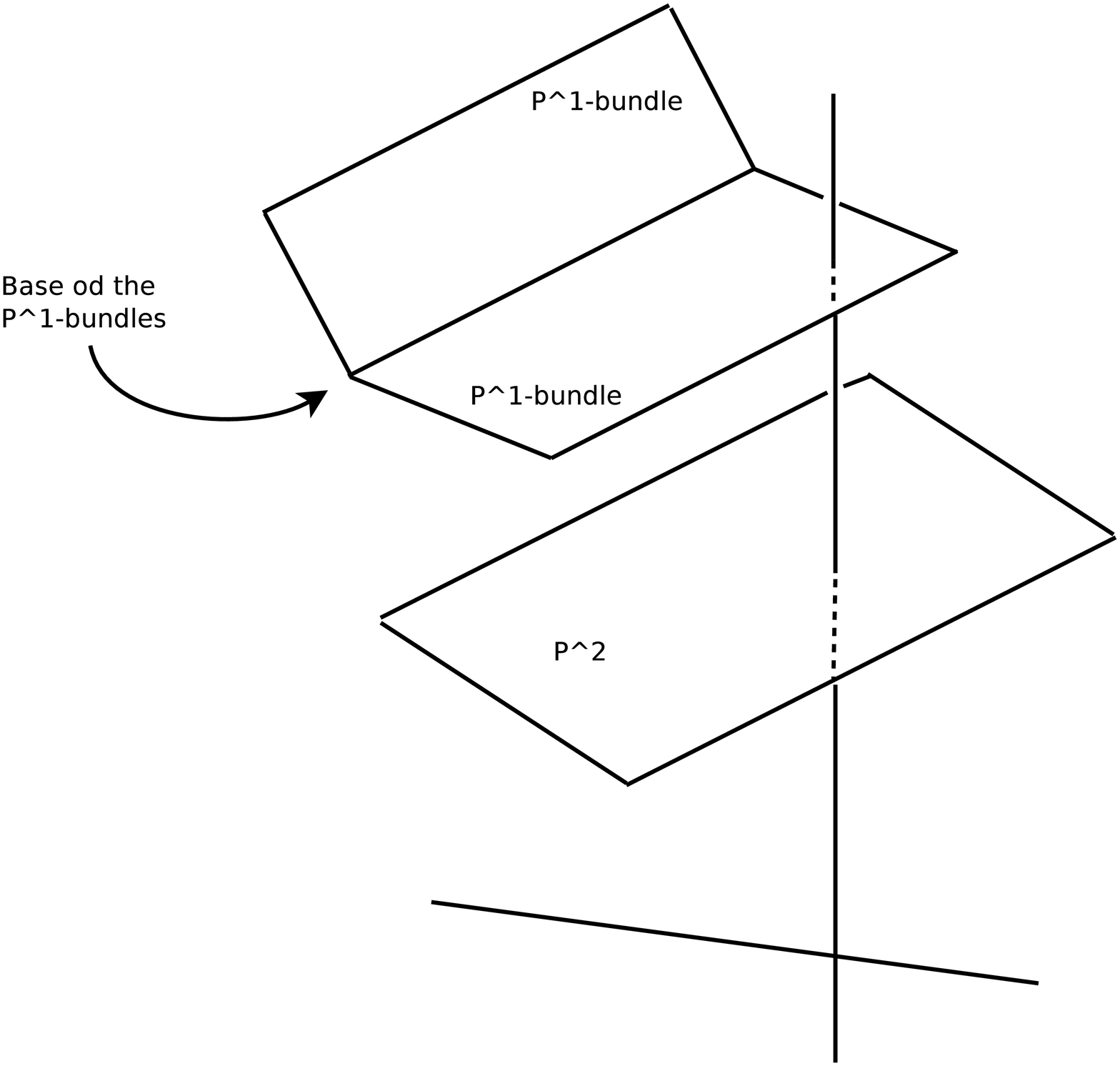}}
\caption{Non-Kodaira fibers of $\mathcal{E}_6$ over the image of a point of type $p_{(3, 4)}$ and $p_{(2, 5)}$ respectively.}
\label{figure: non kodaira fibre - order 6}
\end{figure}
\end{center}

\begin{proof}
%
If $q^{-1}(Z) = \{Z_1, Z_2, Z_3\}$, then the stabilizer of $Z_i \in S$ is the involution $\beta_S^3$. So the fibers of $\mathcal{E}': X' \ra S/\beta^3$ over $q'(Z_i)$, $i = 1, 2, 3$, are 3 fibers of type $I_0^*$ constructed as in Proposition \ref{prop: ell. fib. 2}. They are identified in the quotient by $\beta'$.

If $q^{-1}(Z) = \{Z_1, Z_2\}$, then $\beta_S^3(Z_1) = Z_2$ and $q'(Z_1) = q'(Z_2)$. So the fiber $F'_{q'(Z_1)}$ coincides with the fiber $F'_{q'(Z_2)}$ and they are isomorphic to $E_{\zeta_3}$. Now the automorphism of order 3 $\beta'$ fixes the point $q'(Z_1) = Z$ and the fiber over this point can be constructed as in Proposition \ref{prop: ell. fib. 3}. In particular, if $Z \in S/\beta_S$ is smooth, $F_Z$ is a fiber of type $IV^*$, while if $Z \in S/\beta_S$ is singular, it contains three copies of $\PP^2$.

If $q^{-1}(Z) = \{Z_1\}$, then $\beta_S^3(Z_1) = Z_1$. So the fiber $F'_{q'(Z_1)}$ is a fiber of type $I_0^*$. Let us denote by $E$ the component of multiplicity 2 of this fiber (it is the quotient by $\beta^3$ of the strict transform by $b_1$ of $Z_1 \times E_{\zeta_3} \in S \times E_{\zeta_3}$).
Three of the four components with multiplicity 1 are permuted by $\beta'$ (these are the ones corresponding to the points $Z_1\times P_i \in Z_1 \times E_{\zeta_3}$, $i = 1, 2, 3$), the other is invariant (it is the one corresponding to the point $Z_1\times O \in Z_1 \times E_{\zeta_3}$).

Let us now assume that $Z \in S/\beta_S$ is smooth and that it is not the image of a point of type $p_{(3, 4)}$. On the invariant component $\beta'$ acts as an automorphism of order 3 with 2 fixed points, one of them is the intersection point with the component of multiplicity 2. So $b_2$ blows up these two points. Since $Z$ is smooth, these points lie on curves fixed by $\beta'$, so we introduce 2 rational curves on each point. Moreover there is a point of $E$ which is fixed by $\beta'$ (it is the image of the point $Z_1\times R_1 \in Z_1 \times E_{\zeta_3}$) and again it lies on a curve fixed in $X'$. So, in order to resolve it, we introduce other two rational curves in this fiber.
To recap, the fiber $F_Z$ contains the image of the strict transform of $E$. It intersects: a rational curves which is the image of the three components of $I_0^*$ permuted by $\beta'$; a tree of 5 rational curves which corresponds to the component of $I_0^*$ which is preserved by $\beta'$; a tree of 2 rational curves which corresponds to the point $Z_1\times R_1 \in Z_1 \times E_{\zeta_3}$. The fiber $F_Z$ is then of type $II^*$.

Let $Z \in S/\beta_S$ be the image of a point of type $p_{(3, 4)}$. The fibre of $\mathcal{E}'_6$ over $Z$ is then of type $I_0^*$: the component $E$ of multiplicity 2 is the image of the strict transform of the elliptic curve $E_{\zeta_3}$, and the other four rational curves are fibres of the exceptional divisors ($\PP^1$-bundles) introduced by $b_1$, say $C_1, \ldots, C_4$. 
The automorphism $\beta'$ permutes three of these four rational curves and leaves the other invariant. In particular it fixes 2 points on this component, one of them is the intersection of $C_1$ and $E$ and is an isolated fixed point. The other lies on a curve of fixed points. Hence $b_2$ introduces a copy of $\mathbb{P}^2$ over the isolated fixed point and a tree of two rational curves (fibers of $\mathbb{P}^1$-bundles) over the other fixed point on $C_1$. Moreover there is a point of $E$, which is fixed by $\beta'$ and which lies on a curve of fixed points intersecting transversally $E$. So, $b_2$ introduces a tree of two rational curves (fibers of $\mathbb{P}^1$-bundles) on this point. Thus, we get a configuration of curves as shown in Figure \ref{figure: non kodaira fibre - order 6a}.

Finally, let us now assume that $Z \in S/\beta_S$ is singular. On the invariant component $\beta'$ acts as the identity. So $b_2$ blows up this rational curve introducing divisors which are all contained in the fiber $F_Z$. In particular, on $X$ we have 2 $\PP^1$-bundles over $\PP^1$ meeting along the base. Moreover there is a point of $E$, which is fixed by $\beta'$ (it is the image of the point $Z_1\times R_1\in Z_1 \times E_{\zeta_3}$) and it is an isolated fixed point for $\beta'$ on $X'$. So in order to resolve it we introduce a copy of $\PP^2$.
To recap, the fiber $F_Z$ contains the image of the strict transform of $E$. It intersects: one rational curve which is the image of the three components of $I_0^*$ permuted by $\beta'$; two $\PP^1$-bundles which correspond to the component of $I_0^*$ which is preserved by $\beta'$; one $\PP^2$ which corresponds to the point $Z_1\times R_1\in Z_1 \times E_{\zeta_3}$ (see Figure \ref{figure: non kodaira fibre - order 6b}).

The (rational) section is the images of $S\times O$ under the rational map $\gamma \pi_2 b_2^{-1}\pi_1b_1^{-1}$.
\end{proof}

\begin{rem}
We observe that $\mathcal{E}_6$ would be an elliptic fibration if and only if $n' = p_{(3, 4)} = p_{(2, 5)} = 0$, but this never happens. Indeed by \cite[Theorem 4.1]{D}, $p_{(3, 4)} = p_{(2, 5)} = 0$ implies $6l = -6$, which is absurd.\end{rem}

\begin{rem}
This is the first case where we get 2-dimensional fibers over smooth points of the base surface. The reason why this happens is the following: around a point of type $p_{(3, 4)}$, a local equation for the Weierstrass model of the fibration is
\[y^2 = x^3 + s^3 t^4,\]
and so the origin is a singular point, which is not of $cDV$ type (\cite[Thm. 4.14]{C}). But then to resolve it we must introduce divisors in the corresponding fibre.
\end{rem}

\subsubsection{Weierstrass equation of $\mathcal{E}_6$ if  $S/\alpha_S\simeq \PP^2$}\label{Weeierstrass 6-P2 proj}

The almost elliptic fibration $\mathcal{E}_6$ has a smooth base if and only if $n = 0$ (where $n$ is the number of isolated fixed points of the non-symplectic automorphism of order 3 $\alpha_S = \beta_S^2$). Hence we are interested in particular in this situation. We already observed (Section \ref{Weierstrass 3-P1P1}) that there are two 9-dimensional families of K3 surfaces admitting a non-symplectic automorphism of order 3 without isolated fixed points. We now focus our attention on one of these families (the other will be analyzed in the next section).

Let us assume $\alpha_S = \beta_S^2$ fixes only 1 curve. Then it has genus 4, and $S$ admits a projective model as complete intersection of a quadric and a cubic in $\PP^4$ as in \eqref{eq: K3 in P4}. Since $\beta^2$ has no isolated fixed points, $p_{(2, 5)} = 0$ and thus, by \cite[Theorem 4.1]{D}, we obtain $l = 0$, $p_{(3, 4)} = 6$. So $\beta_S$ fixes exactly 6 points. We now apply the Lefschtez fixed points formula in order to compute the dimension of the family of K3 surfaces admitting a purely non-symplectic automorphism of order 6 whose square fixes only one curve. We observe that $\beta^*$ acts on $H^{2}(S, \C)$ and we denote by $m := \dim(H^2(S, \C))_{\zeta_6^i}$, $i = 1, 5$, $r := \dim(H^2(S, \C))_1$, $a := \dim(H^2(S, \C))_{\zeta_6^j}$, $j = 2, 4$, $b := \dim(H^2(S, \C))_{-1}$. The Lefschetz fixed points formula states that
\[\sum_{i = 0}^4 \operatorname{tr} \left( {\beta_S^*}_{|H^i(S, \C)} \right) = \chi(\Fix_{\beta_S}(S)).\]
We obtain $2 + m - a - b + r = 6$. Moreover we apply the same formula to $\beta_S^2$ and obtain $2 - m - a + b + r = -6$. Since $\dim(H^2(S, \C)) = 22$, we have also $2m + 2a + b + r = 22$. We recall that $m, a, b, r$ are non negative integers (they are the dimension of vector spaces), $m \geq 1$ (because $H^{2, 0}(S) \subset H^{2}(S, \C)_{\zeta_6}$) and $r \geq 1$ because there exists at least one invariant class. So the unique possibilities for $(m, a, b, r)$ are $(7, 3, 1, 1)$ and $(6, 4, 0, 2)$. Since the dimension of the moduli space of the K3 surfaces with the required properties is $m - 1$, we proved that it is at most 6 (in particular not all the K3 surfaces admitting a non-symplectic automorphism $\alpha_S$ of order 3 fixing one curve admit a purely non-symplectic automorphism $\beta_S$ of order 6 such that $\beta_S^2 = \alpha_S$).

Here we show that indeed there exists at least a 6-dimensional family of K3 surfaces with a purely non-symplectic automorphism of order 6 whose square fixes one curve of genus 4, specializing the family given in \eqref{eq: K3 in P4}. Indeed the complete intersections in $\PP^4$ given by the system
\begin{eqnarray}\label{eq: 6:1 to P2}
\left\{ \begin{array}{rrrrr}
F_2(x_0:x_1:x_2) & +x_3^2 & & = 0\\
F_3(x_0:x_1:x_2) & & +x_4^3 & = 0
\end{array}\right.
\end{eqnarray}
are generically smooth and hence K3 surfaces. The projective dimension of such a family is 6 and every member of this family clearly admits an automorphism of order 6, induced by $\beta_{\PP^4}: (x_0: x_1: x_2: x_3: x_4) \ra (x_0: x_1: x_2: -x_3: \zeta_3^2 x_4)$.
We will denote by $S$ the generic K3 surface with equation \eqref{eq: 6:1 to P2} and by $\beta_S$ the automorphism induced by $\beta_{\PP^4}$ on $S$. Observe that $V(F_2(x_0: x_1: x_2) + x_3^2)$ is a quadric in $\PP^3$ and thus it is isomorphic to $\PP^1 \times \PP^1$. Denote by $R_3 := V(F_3(x_0: x_1: x_2) + x_4^3)$. We have the following diagram:

\[\xymatrix{
 & S \ar[dr]_{\pi_2}^{2:1} \ar[dd]_{6:1} \ar[dl]^{\pi_1}_{3:1} & \\
V(F_2(x_0: x_1: x_2) + x_3^2) \simeq \PP^1 \times \PP^1 \ar[dr]_{2:1} & & R_3 := V(F_3(x_0: x_1: x_2) + x_4^3) \ar[dl]^{3:1}\\
 & \PP^2_{(x_0: x_1: x_2)} & }\]
Hence $S/\beta_{S} \simeq \PP^2$. The fixed locus $\Fix_{\beta_S}(S)$ consists of 6 points, as we expect. The fixed locus of $\beta_S^2$ consists of a curve of genus 4, as we required and the fixed locus of $\beta_S^3$ is the $3:1$ cover of the rational curve $V(F_2(x_0: x_1: x_2))\subset \PP^2_{(x_0:x_1:x_2)}$ branched along the six points $V(F_2(x_0: x_1: x_2)) \cap V(F_3(x_0: x_1: x_2))$.

In order to exhibit $S$ as $6:1$ cover of $\PP^2$ we introduce the variable $w:=i x_3 x_4$ and thus an equation of $S$ in $\cO_{\PP^2}(2)$ is
\[w^6 = F_2(x_0: x_1: x_2)^3 F_3(x_0: x_1: x_2)^2\]
and $\beta_S: (w, (x_0: x_1: x_2)) \mapsto (-\zeta_3^2 w, (x_0: x_1: x_2))$.

From the equation of $S$ we deduce a Weierstrass equation for the almost elliptic fibration described in Proposition \ref{prop: ell. fib. 6}:
\[Y^2 = X^3 + F_2(x_0: x_1: x_2)^3 F_3(x_0: x_1: x_2)^4\]
where the functions $Y:= \frac{v w^{15}}{F_2^6 F_3^3}$, $X: = \frac{u w^{10}}{F_2^4 F_3^2}$ are invariant for $\beta_S \times\beta_E^5$.

\subsubsection{Weierstrass equation of $\mathcal{E}_6$ if  $S/\alpha_S\simeq \mathbb{F}_{12}$}\label{Weierstrass 6-F12}
The second family of K3 surfaces admitting a non-symplectic automorphism of order 6 such that $n = 0$, consists of the family of K3 surfaces, $S$, admitting an isotrivial elliptic fibration whose equation is $y^2 = x^3 + p_{12}(t)$ where $p_{12}(t)$ is a generic polynomial of degree 12 without multiple roots. The non-symplectic automorphism we consider is $\beta_S: (x, y, t) \ra (\zeta_3^2 x, -y, t)$. We analyze the generic case (i.e.\ $n = 0$, i.e. $p_{12}(t)$ has no multiple roots) showing that in this case $S$ is a $6:1$ cover of $\mathbb{F}_{12}$ and giving an equation for the almost elliptic fibration $\mathcal{E}_6$ in this case. 

Let $S$ have equation $y^2 z = x^3 + p_{12}(s: t) z^3$ in $\PP(\cO_{\PP^1}(4) \oplus \cO_{\PP^1}(6) \oplus \cO_{\PP^1})$, and consider the morphism to $\F_{12}$ given by $(s: t: x: y: z) \longmapsto (s: t: x^3: z^3)$. Observe that this is the composition of the $2:1$ covering $S \longrightarrow \F_4$ described in Section \ref{subsect: covering of f4} with the $3:1$ covering $\F_4 \longrightarrow \F_{12}$ described in Section \ref{sec: Hirzebruch}, hence it is a $6:1$ covering. Observe also that $S$ admits a birational model $S'$ in $\cO_{\F_{12}}(10 D_T + D_Z)$, with equation $w^6 = X^2 (X + p_{12}(S: T) Z)^3 Z$: the birational morphism
\[\begin{array}{ccc}
S & \longrightarrow & S'\\
(s: t: x: y: z) & \longmapsto & (w, (S: T: X: Z)) = (xyz, (s: t: x^3: z^3))
\end{array}\]
is compatible with $\beta_S$, since it induces on $S'$ the covering automorphism $\beta_{S'}: w \ra -\zeta_3^2 w$.

The functions $\eta := \frac{v w^{15}}{X^3 (X + p_{12}(S: T) Z)^6}$, $\xi := \frac{u w^{10}}{X^2 (X + p_{12}(S: T) Z)^4}$ defined on $S' \times E_{\zeta_3}$ are invariant for $\beta_{S'} \times \beta_E^5$ and satisfy
\[\eta^2 = \xi^3 + X^4 (X + p_{12}(S: T) Z)^3 Z^5\]
which is a Weierstrass equation for $X$ in $\PP(\cO_{\F_{12}}(-2 K_{\F_{12}}) \oplus \cO_{\F_{12}}(-3 K_{\F_{12}}) \oplus \cO_{\F_{12}})$.

\subsection{Invariants of the fixed loci of $\beta_S^j$ for some K3 surfaces}\label{subsec: invariants fixed loci}
In \cite{D} some non-symplectic automorphisms of order 6 on K3 surfaces are classified. In particular, Dillies considers the case where $S$ is a K3 surface with the following equation $y^2 = x^3 + p_{12}(t)$, $p_{12}(t)$ is a polynomial of degree 12 which does not admit roots of multiplicity greeter then 5, and the automorphism of order 6 is $(x, y, t) \mapsto (\zeta_3 x, -y, t)$. In order to compute the Hodge numbers of the Calabi--Yau $X$ of type $X_6$, we want to describe the fixed loci of $\beta_S$, $\beta_S^2$ and $\beta_S^3$ according to the variation of $p_{12}(t)$. Let $\overline{t}$ be a zero of $p_{12}(t)$, $\mu_{\overline{t}}$ the multiplicity of such zero, $f_{\overline{t}}$ the fiber of $y^2 = x^3 + p_{12}(t)$ over $\overline{t}$.
The automorphism $\beta_S$ fixes the zero section; the automorphism $\beta_S^2$ fixes the zero section and the bisection $y^2 = p_{12}(t)$ (in some cases this can split in 2 distinct sections); the automorphism $\beta_S^3$ fixes the zero section and the trisection $x^3 = -p_{12}(t)$ (in some cases this can split either in a section and a bisection or in three sections). All the other fixed curves are components of the reducible fibers.
In Table \ref{table: fixed loci} we give the number of isolated points and of components which are fixed by $\beta_S^j$ on the singular fibers. We do not compute the points which are fixed on the fiber but lie on curves (sections or multi-sections) fixed by the automorphism.
\begin{center}
\begin{longtable}{|c|c|c|c|c|}
\caption{Fixed loci of $\beta_S$ on singular fibers}
\label{table: fixed loci}\\
\hline
$\mu_{\overline{t}}$ & fiber $f_{\overline{t}}$ & $\Fix_{\beta_S}(S)$ & $\Fix_{\beta_S^2}(S)$ & $\Fix_{\beta_S^3}(S)$\\
\hline
1 & $II$ & 1 pt. & - & -\\
2 & $IV$ & 1 pt. & 1 pt. & -\\
3 & $I_0^*$ & 2 pts. & 1 pt. & 1 curve\\
4 & $IV^*$ & 3 pts. & 1 curve, 3 pts. & 1 curve\\
5 & $II^*$ & 1 curve, 7 pts. & 2 curves, 4 pts. & 4 curves\\
\hline
\end{longtable}
\end{center}

Moreover we need to compute explicitly the numbers $r := \dim (H^{2}(S, \C)^{\beta_S})$  and $m := \dim(H^{2}(S, \C))_{-\zeta_3}$. This computation is done by applying the Lefschetz fixed points formula to $\beta_S$, $\beta_S^2$ and $\beta_S^3$ (cf. Section \ref{Weeierstrass 6-P2 proj}).
With all these information, one can compute the invariants related to the automorphism described in \cite{D}. The results of these computations are shown in Table \ref{table: Hodge numbers X6}.

\subsection{A family of Calabi--Yau 3-folds of type $X_6$ without maximal unipotent monodromy}\label{sec: example order 6 no mum}
We now construct another example of a K3 surface admitting a non-symplectic automorphism of order 6 by specializing $y^2 = x^3 + p_{12}(t)$. The peculiarity of this example is that the family of Calabi--Yau 3-folds constructed does not admit maximal unipotent monodromy, since it satisfies the condition of Remark \ref{rem: maximal autom 6}.

\begin{example}\label{example no mum X6}
Let us assume $p_{12}(t) = t^4 (t - 1)^3 (t + 1)^3 (t - \lambda)^2$ with $\lambda \neq 0, \pm 1$. The elliptic K3 surface whose Weierstrass equation is $y^2 = x^3 + t^4 (t - 1)^3 (t + 1)^3 (t - \lambda)^2$ has 4 reducible fibers: $f_0$ is of type $IV^*$, $f_1$ and $f_{-1}$ are  of type $I_0^*$, $f_{\lambda}$ is of type $IV$. Applying the results described in Section \ref{subsec: invariants fixed loci}, one obtains that $\beta_S$ fixes one rational curve (the zero section) and 8 isolated points (3 on $f_0$, 2 on $f_i$ for $i = 1, -1$ and 1 on $f_\lambda$). The automorphism $\beta_S^2$ fixes 3 rational curves (the zero section, the bisection $y^2 = t^4 (t - 1)^3 (t + 1)^3 (t - \lambda)^2$ and one component of $f_0$) and 6 isolated points (3 on $f_0$, 1 on $f_i$ for $i = 1, -1, \lambda$). The automorphism $\beta_S^3$ fixes 5 rational curves (the zero section, the trisection $x^3 = -t^4 (t - 1)^3 (t + 1)^3 (t - \lambda)^2$, 1 component of each fiber $f_i$, $i = 0, 1, -1$).
So the Euler characteristic of the fixed loci of $\beta_S$, $\beta_S^2$ and $\beta_S^3$ are 8, 12, 10 respectively. Applying the Lefschetz fixed points formula one computes $r = 11$ and $m = 2$. So the family of K3 surfaces admitting a non-symplectic automorphism of order 6, such that the fixed loci of all its powers are as described, is 1-dimensional. Since also the family of K3 surfaces admitting an elliptic fibration with equation $y^2 = x^3 + t^4 (t - 1)^3 (t + 1)^3 (t - \lambda)^2$ is 1-dimensional, these two families coincide.\\
We observe that the family of K3 surfaces described is obtained also as the minimal model of the quotient $(E_{\zeta_3} \times C) / (\beta_E \times \beta_C)$, where $C$ is a $6:1$ cover of $\PP^1_t$ with equation $w^6 = t^4 (t - 1)^3 (t + 1)^3 (t - \lambda)^2$ and $\beta_C$ is the cover automorphism $\beta_C: (w, t) \mapsto (-\zeta_3 w, t)$. As in Remark \ref{rem: variationa Hodge structure 3} this implies that the variation of the Hodge structures both of the family of $S$ and of the family of $X$ depends only on the variation of the Hodge structures of $C$.
\end{example}

\section{Appendix: the Tables}
In this appendix we summarize the properties of the Calabi--Yau constructed above, in some tables. In all the tables we give a reference to where the K3 surface $S$ and the associated automorphism $\alpha_S$ (or $\beta_S$) are constructed, we list the properties of the fixed locus of the automorphism and of its powers (we follow the notation introduced before), we compute  the Hodge numbers and we say whenever the family of Calabi--Yau constructed does not admit maximal unipotent monodromy (this is denoted by No MUM) and whenever the family is ``new'', in the sense that it is not contained in the list \cite{list} of known Calabi--Yau 3-folds. We omit the cases $n = 2, 3$ since they were already analyzed in previous papers.

\subsection{Order 4}

In the following table we assume that the curve of highest genus, $g(D)$, fixed by $\alpha_S^2$ is also fixed by $\alpha_S$. The first line corresponds to the assumption that $\alpha^2$ fixes two elliptic curves. 
\newpage
{\tiny
\begin{center}
\begin{longtable}{|c|c||c|c|c|c|c|c||c|c||c|c|}
\caption{Hodge numbers of $X_4$ in case case 1) Proposition \ref{prop: Borcea Voisin 4}.}
\label{table: Hodge numbers case 1}\\
\hline
 & Ref K3 & $m$ & $r$ & $n_1$ & $k$ & $a$ & $g(D)$ & $h^{1, 1}(X)$ & $h^{2, 1}(X)$ & No MUM & new\\
\hline
\hline
1 & \cite[Table 1, l. 1]{AS2} & 6 & 6 & 4 & 1 & 0 & 1 & 25 & 13 & &\\
\hline
2 & \cite[Table 1, l. 2]{AS2} & 5 & 7 & 4 & 1 & 0 & 1 & 29 & 11 & &\\
\hline
3 & \cite[Table 1, l. 3]{AS2} & 4 & 10 & 6 & 2 & 0 & 1 & 46 & 10 & &\\
\hline
4 & \cite[Table 1, l. 4]{AS2} & 4 & 8 & 4 & 1 & 1 & 1 & 34 & 10 & &\\
\hline
5 & \cite[Table 1, l. 5]{AS2} & 3 & 9 & 4 & 1 & 2 & 1 & 39 & 9 & &\\
\hline
6 & \cite[Table 1, l. 6]{AS2} & 2 & 10 & 4 & 1 & 3 & 1 & 44 & 8 & &\\
\hline
7 & \cite[Table 2, l. 1]{AS2} & 7 & 1 & 0 & 1 & 0 & 3 & 9 & 27 & &\\
\hline
8 & \cite[Table 2, l. 2]{AS2} & 6 & 4 & 2 & 1 & 0 & 2 & 19 & 19 & &\\
\hline
9 & \cite[Table 2, l. 3]{AS2} & 6 & 2 & 0 & 1 & 1 & 3 & 14 & 26 & &\\
\hline
10 & \cite[Table 2, l. 4]{AS2} & 5 & 5 & 2 & 1 & 1 & 2 & 24 & 18 & &\\
\hline
11 & \cite[Table 2, l. 5]{AS2} & 4 & 6 & 2 & 1 & 2 & 2 & 29 & 17 & &\\
\hline
12 & \cite[Table 3, l. 1]{AS2} & 4 & 10 & 6 & 1 & 0 & 0 & 39 & 3 & X & \\
\hline
13 & \cite[Table 3, l. 2]{AS2} & 3 & 13 & 8 & 2 & 0 & 0 & 56 & 2 & X & \\
\hline
14 & \cite[Table 3, l. 3]{AS2} & 3 & 11 & 6 & 1 & 1 & 0 & 44 & 2 & X & X \\
\hline
15 & \cite[Table 3, l. 4]{AS2} & 2 & 16 & 10 & 3 & 0 & 0 & 73 & 1 & X & \\
\hline
16 & \cite[Table 3, l. 5]{AS2} & 2 & 14 & 8 & 2 & 1 & 0 & 61 & 1 & X & \\
\hline
17 & \cite[Table 3, l. 6]{AS2} & 2 & 12 & 6 & 1 & 2 & 0 & 49 & 1 & X & X \\
\hline
18 & \cite[Table 3, l. 7]{AS2} & 1 & 19 & 12 & 4 & 0 & 0 & 90 & 0 & X & \\
\hline
19 & \cite[Table 3, l. 8]{AS2} & 1 & 13 & 6 & 1 & 3 & 0 & 54 & 0 & X & X \\
\hline
\end{longtable}
\end{center}
}
In the following table we assume that $\alpha_S$ is an involution on the curve of highest degree, $g(D)$, fixed by $\alpha_S^2$. 
{\tiny
\begin{center}
\begin{longtable}{|c|c||c|c|c|c|c|c|c||c|c||c|c|}
\caption{Hodge numbers of $X_4$ in case 2) Proposition \ref{prop: Borcea Voisin 4}.}
\label{table: Hodge numbers case 2}\\
\hline
 & Ref. K3 & $m$ & $r$ & $n_1$ & $n_2$ & $k$ & $a$ & $g(D)$ & $h^{1, 1}(X)$ & $h^{2, 1}(X)$ & No MUM & new\\
\hline
\hline
1 & \cite[Table 5, l. 1]{AS2} & 10 & 2 & 2 & 2 & 0 & 0 & 10 & 17 & 29 & & \\
\hline
2 & \cite[Table 5, l. 2]{AS2} & 10 & 2 & 0 & 4 & 0 & 0 & 9 & 14 & 26 & & \\
\hline
3 & \cite[Table 5, l. 3]{AS2} & 8 & 6 & 2 & 4 & 1 & 0 & 7 & 32 & 20 & & \\
\hline
4 & \cite[Table 5, l. 4]{AS2} & 8 & 6 & 0 & 6 & 1 & 0 & 6 & 29 & 17 & & \\
\hline
5 & \cite[Table 5, l. 5]{AS2} & 6 & 10 & 6 & 2 & 2 & 0 & 6 & 53 & 17 & & \\
\hline
6 & \cite[Table 5, l. 6]{AS2} & 6 & 10 & 4 & 4 & 2 & 0 & 5 & 50 & 14 & & \\
\hline
7 & \cite[Table 5, l. 7]{AS2} & 6 & 10 & 2 & 6 & 2 & 0 & 4 & 47 & 11 & & \\
\hline
8 & \cite[Table 5, l. 8]{AS2} & 6 & 10 & 0 & 8 & 2 & 0 & 3 & 44 & 8 & & \\
\hline
9 & \cite[Table 5, l. 9]{AS2} & 4 & 14 & 6 & 4 & 3 & 0 & 3 & 68 & 8 & & \\
\hline
10 & \cite[Table 5, l. 10]{AS2} & 4 & 14 & 4 & 6 & 3 & 0 & 2 & 65 & 5 & & \\
\hline
11 & \cite[Table 5, l. 11]{AS2} & 2 & 18 & 10 & 2 & 4 & 0 & 2 & 89 & 5 & & \\
\hline
12 & \cite[Table 5, l. 12]{AS2} & 2 & 18 & 8 & 4 & 4 & 0 & 1 & 86 & 2 & & \\
\hline
\end{longtable}
\end{center}
}

In \cite[Table 6]{AS2} a list of admissible lattices associated to certain fixed loci is given. For some of them, the corresponding family of K3 surfaces is also constructed. In particular, all the cases listed in \cite[Table 6]{AS2} and with $g = 0$ are associated to a family of K3 surfaces, constructed in \cite[Example 7.2]{AS2}. If also $\alpha_S^2$ does not fix curves with positive genus, then we are in the assumption of Remark \ref{rem: no mum 4}. Here we list the Calabi--Yau of type $X_4$ corresponding to these assumptions.
{\tiny
\begin{center}
\begin{longtable}{|c|c||c|c|c|c|c|c|c||c|c||c|c|}
\caption{$\alpha$ fixes only isolated points, $\alpha^2$ fixes only rational curves.}
\label{table: only rational curves}\\
\hline
 & Ref. K3 & $m$ & $r$ & $n_1$ & $n_2$ & $k$ & $a$ & $g(D)$ & $h^{1, 1}(X)$ & $h^{2, 1}(X)$ & No MUM & new\\
\hline
\hline
1 & \cite[Table 6, l. 13]{AS2} & 5 & 7 & 2 & 2 & 0 & 0 & 0 & 22 & 4 & X & X\\
\hline
2 & \cite[Table 6, l. 18]{AS2} & 4 & 8 & 2 & 2 & 0 & 1 & 0 & 27 & 3 & X & X\\
\hline
3 & \cite[Table 6, l. 23]{AS2} & 3 & 9 & 2 & 2 & 0 & 2 & 0 & 32 & 2 & X & X \\
\hline
4 & \cite[Table 6, l. 27]{AS2} & 2 & 10 & 2 & 2 & 0 & 3 & 0 & 37 & 1 & X & X \\
\hline
5 & \cite[Table 6, l. 30]{AS2} & 1 & 11 & 2 & 2 & 0 & 4 & 0 & 42 & 0 & X & X \\
\hline
\end{longtable}
\end{center}
}

\begin{rem}
We observe that the Hodge numbers of the Calabi--Yau in Table \ref{table: Hodge numbers case 1}, line 11 are the same of the Calabi--Yau in Table \ref{table: Hodge numbers case 2}, line 4 and are mirrors of the ones of the Calabi--Yau in Table \ref{table: Hodge numbers case 2}, line 1. The Hodge numbers of the Calabi--Yau in Table \ref{table: Hodge numbers case 1}, line 9 are the same of the Calabi--Yau in Table \ref{table: Hodge numbers case 2}, line 2. The Hodge numbers of the Calabi--Yau in Table \ref{table: Hodge numbers case 1} line 8 are self mirror.
\end{rem}

\subsection{Order 6}

We compute the Hodge numbers of Calabi--Yau 3-folds of type $X_6$ as in Proposition \ref{prop: hodge numbers 6} if the K3 surface $S$ is one the the K3 surfaces listed in \cite[Table 1 (column 1--11)]{D}. The last line corresponds to the K3 surface constructed in Example \ref{example no mum X6}. We omit $g(D)$ since it is zero for any K3 considered in the Table.


{\tiny
\begin{center}
\begin{longtable}{|c||c|c|c|c|c||c|c|c||c|c|c||c|c||c|c||c|c|}
\caption{Hodge numbers of $X_6$.}
\label{table: Hodge numbers X6}\\
\hline
 & $n$ & $n'$ & $k$ & $a$ & $g(B)$ & $p_{(3, 4)}$ & $p_{(2, 5)}$ & $l$ & $N$ & $b$ & $g(F)$ & $r$ & $m$ & $h^{1, 1}$ & $h^{2, 1}$ & No MUM & new\\
\hline
\hline
1 & 0 & 0 & 2 & 0 & 5 & 12 & 0 & 1 & 2 & 0 & 10 & 2 & 10 & 29 & 29 & & \\
\hline
2 & 1 & 0 & 2 & 0 & 4 & 10 & 1 & 1 & 2 & 0 & 9 & 3 & 9 & 31 & 25 & & \\
\hline
3 & 2 & 0 & 2 & 0 & 3 & 8 & 2 & 1 & 2 & 0 & 8 & 4 & 8 & 33 & 21 & & \\
\hline
4 & 3 & 0 & 2 & 0 & 2 & 6 & 3 & 1 & 2 & 0 & 7 & 5 & 7 & 35 & 17 & & \\
\hline
5 & 3 & 1 & 3 & 0 & 3 & 10 & 1 & 1 & 3 & 0 & 7 & 6 & 7 & 43 & 19 & & \\
\hline
6 & 4 & 0 & 2 & 0 & 1 & 4 & 4 & 1 & 2 & 0 & 6 & 6 & 6 & 37 & 13 & & \\
\hline
7 & 4 & 1 & 3 & 0 & 2 & 8 & 2 & 1 & 3 & 0 & 6 & 7 & 6 & 45 & 15 & & \\
\hline
8 & 4 & 0 & 4 & 0 & 3 & 10 & 4 & 2 & 6 & 0 & 6 & 10 & 6 & 65 & 17 & & \\
\hline
9 & 5 & 0 & 2 & 0 & 0 & 2 & 5 & 1 & 2 & 0 & 5 & 7 & 5 & 39 & 9 & & \\
\hline
10 & 5 & 1 & 3 & 0 & 1 & 6 & 3 & 1 & 3 & 0 & 5 & 8 & 5 & 47 & 11 & & \\
\hline
11 & 5 & 0 & 4 & 0 & 2 & 8 & 5 & 2 & 6 & 0 & 5 & 11 & 5 & 67 & 13 & & \\
\hline
12 & 6 & 1 & 3 & 0 & 0 & 4 & 4 & 1 & 3 & 0 & 4 & 9 & 4 & 49 & 7 & & \\
\hline
13 & 6 & 0 & 4 & 0 & 1 & 6 & 6 & 2 & 6 & 0 & 4 & 12 & 4 & 69 & 9 & & \\
\hline
14 & 7 & 0 & 4 & 0 & 0 & 4 & 7 & 2 & 6 & 0 & 3 & 13 & 3 & 71 & 5 & & \\
\hline
15 & 7 & 1 & 5 & 0 & 1 & 8 & 5 & 2 & 7 & 0 & 3 & 14 & 3 & 72 & 7 & & \\
\hline
16 & 8 & 1 & 5 & 0 & 0 & 6 & 6 & 2 & 7 & 0 & 2 & 15 & 2 & 81 & 3 & & \\
\hline
17 & 8 & 0 & 6 & 0 & 1 & 8 & 8 & 3 & 10 & 0 & 2 & 18 & 2 & 101 & 5 & & \\
\hline
18 & 9 & 0 & 6 & 0 & 0 & 6 & 9 & 3 & 10 & 0 & 1 & 19 & 1 & 103 & 1 & & \\
\hline
\hline
19 & 6 & 1 & 3 & 0 & 0 & 4 & 4 & 1 & 5 & 0 & 0 & 11 & 2 & 55 & 1 & X & X \\
\hline
\end{longtable}
\end{center}
}

\bibliographystyle{amsplain} 

\end{document}